\documentclass[10pt]{article}

\usepackage[utf8]{inputenc}
\usepackage[T1]{fontenc}
\usepackage{amsmath,amsthm,amssymb,mathtools,mathrsfs}
\usepackage[numbers,sort&compress]{natbib}
\usepackage{enumitem}
\usepackage{microtype}
\usepackage[unicode,hidelinks]{hyperref}

\hypersetup{
  pdftitle={Cyclic Projective Orbits on Rational Normal Curves and MDS Codes},
  pdfauthor={Yangcheng Li and Pingzhi Yuan},
  pdfkeywords={MDS codes, rational normal curves, cyclic operators, companion matrices, generalized Reed--Solomon codes, algebraic tori, finite fields},
  bookmarksnumbered=true,
  bookmarksopen=true
}

\numberwithin{equation}{section}
\allowdisplaybreaks
\newtheorem{theorem}{Theorem}[section]
\newtheorem{lemma}[theorem]{Lemma}
\newtheorem{proposition}[theorem]{Proposition}
\newtheorem{corollary}[theorem]{Corollary}
\theoremstyle{definition}
\newtheorem{definition}[theorem]{Definition}
\newtheorem{example}[theorem]{Example}
\theoremstyle{remark}
\newtheorem{remark}[theorem]{Remark}

\newcommand{\F}{\mathbb F}
\newcommand{\A}{\mathbb A}
\newcommand{\PP}{\mathbb P}
\newcommand{\Gm}{\mathbb G_m}
\newcommand{\Kcode}{\mathcal K}
\newcommand{\MDSopen}{\mathscr M}
\newcommand{\GRSsurf}{\mathfrak G}
\newcommand{\Upower}{\mathfrak U}
\newcommand{\ord}{\operatorname{ord}}
\newcommand{\Sym}{\operatorname{Sym}}
\newcommand{\Disc}{\operatorname{Disc}}
\newcommand{\coeff}{\operatorname{coeff}}

\title{Cyclic Projective Orbits on Rational Normal Curves and MDS Codes}

\author{%
Yangcheng Li\thanks{Corresponding author. E-mail: \texttt{liyc@m.scnu.edu.cn}.}
\qquad
Pingzhi Yuan\thanks{E-mail: \texttt{yuanpz@scnu.edu.cn}.}\\
\small School of Mathematical Sciences, South China Normal University\\
\small Guangzhou 510631, Guangdong, P. R. China}

\date{}

\begin{document}
\baselineskip15pt
\maketitle

\begin{abstract}
Let \(A\) be a cyclic operator on an \(r\)-dimensional vector space over a
field \(k\), and let \(z\) be a cyclic vector. Their Krylov code has
parity-check matrix
\((z,Az,\ldots,A^{n-1}z)\).
For \(r\ge 3\) and \(n\ge r+3\), we prove that an MDS orbit segment lies on
a rational normal curve precisely when the projective pair \((A,[z])\) is
conjugate to one arising from the \((r-1)\)-st symmetric-power action of
\(\mathrm{PGL}_2\). Over finite fields, for companion operators, this gives a
complete classification of the generalized Reed--Solomon locus into split
semisimple, two nonsplit semisimple, and unipotent families.

Over an algebraically closed field \(k\), the Zariski closure
\(\GRSsurf_{r,k}\) of the semisimple GRS coefficient locus is an irreducible
rational surface, generically parameterized two-to-one by a two-dimensional
torus of geometric-progression root sets; reversal is the generic ambiguity.
The affine quotient of the parameter torus by reversal is the normalization of
\(\GRSsurf_{r,k}\cap D(a_0)\), its nonzero-constant-term open part. The
codimension in the space of monic degree-\(r\) polynomials is \(r-2\).
Frobenius descent gives an exact formula for the number of GRS polynomials
over \(\mathbb F_q\). A canonical remainder parity-check matrix defines the
MDS locus by a principal open condition.
For fixed \(r\ge3\) and \(n\ge r+3\), the proportion of all monic degree-\(r\)
polynomials over \(\mathbb F_q\) whose companion codes are
MDS and non-GRS tends to one as \(q\to\infty\) through prime powers.
\end{abstract}

\medskip
\noindent\textbf{Keywords:}
MDS codes; rational normal curves; cyclic operators; companion matrices;
generalized Reed--Solomon codes; algebraic tori

\medskip
\noindent\textbf{2020 MSC:} Primary 14N05; Secondary 94B05, 51E21, 11T71
\medskip

\section{Introduction and main results}
\label{sec:introduction}

A linear MDS code may be viewed as a projective arc, and a generalized
Reed--Solomon code corresponds to an arc contained in a rational normal
curve (RNC); see Proposition~\ref{prop:projective-grs-criterion}.  This
raises a natural rigidity question for orbit
configurations: when can a finite segment
\[
 [z],[Az],\ldots,[A^{n-1}z]
\]
of a cyclic projective orbit lie on an RNC?  The question is simultaneously
geometric, representation-theoretic, and arithmetic.  Geometrically,
\(r+2\) points in linearly general position determine a unique RNC over an
algebraic closure.  Dynamically, the shift of an orbit segment forces the
acting projectivity to preserve that curve.  Representation-theoretically,
its stabilizer is the \((r-1)\)-st symmetric-power image of
\(\operatorname{PGL}_2\).

We encode this orbit by the kernel of the Krylov matrix
\[
 H_n(A,z)=(z,Az,\ldots,A^{n-1}z),\qquad
 \Kcode_n(A,z):=\ker H_n(A,z).
\]
Here \(A\) acts on an \(r\)-dimensional vector space and \(z\) is cyclic,
meaning that \(z,Az,\ldots,A^{r-1}z\) is a basis.  The resulting Krylov code
is intrinsic: it is independent of the ambient basis and of the choice of
cyclic vector, and it depends only on the similarity class of \(A\)
(Theorem~\ref{thm:intrinsic-krylov}).  For the companion operator \(T_g\),
it is exactly the coefficient code formed by the degree-\(<n\) multiples
of \(g\) (Proposition~\ref{prop:companion-realization}).  Thus the
principal-ideal model is a later quotient-ring realization of an
operator-theoretic construction, rather than the source of the geometry.

\begin{theorem}[Orbit rigidity]
Let \(k\) be a field, let \(r\ge3\) and \(n\ge r+3\), and let \((A,z)\)
be a cyclic pair on an \(r\)-dimensional \(k\)-vector space.  Assume that
\(\Kcode_n(A,z)\) is MDS and put \(d=r-1\).  Then
\[
 [z],[Az],\ldots,[A^{n-1}z]
\]
 lies on a \(k\)-form of a rational normal curve if and only if there exist
\[
 S\in\operatorname{GL}_r(k),\qquad
 B\in\operatorname{GL}_2(k),\qquad c\in k^\times
\]
such that
\[
 SAS^{-1}=c\,\Sym^d(B)
 \quad\text{and}\quad
 [Sz]\in\nu_d\bigl(\PP^1(k)\bigr).
\]
When these conditions hold, the containing \(k\)-form is split, is unique as
a geometric RNC, and is preserved by the projective class of \(A\).
\end{theorem}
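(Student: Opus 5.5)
The argument runs through the uniqueness of the rational normal curve through \(r+2\) points in general position. Since \(\Kcode_n(A,z)\) is MDS and it is the kernel of \(H_n(A,z)\), every \(r\) of the columns \(z,Az,\ldots,A^{n-1}z\) are linearly independent. So the \(n\ge r+3\) points \(P_i=[A^iz]\) form an arc in \(\PP^{r-1}\).

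\emph{Easy direction.} Suppose \(SAS^{-1}=c\,\Sym^d(B)\) and \(Sz=\lambda\nu_d(t_0)\). Then \(S A^i z=\lambda c^i\,\Sym^d(B)^i\nu_d(t_0)\). By the equivariance \(\Sym^d(B)\nu_d(t)=\nu_d(Bt)\) up to scalar, this equals \(\nu_d(B^it_0)\) up to scalar. Hence all \(P_i\) lie on \(S^{-1}\nu_d(\PP^1)\), which is a split \(k\)-form of an RNC.

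\emph{Hard direction.} Suppose the \(P_i\) lie on a \(k\)-form \(C\) of an RNC. Pass to \(\bar k\), where \(C\) becomes a genuine RNC.
\begin{enumerate}[label=(\roman*)]
\item Both \(P_0,\ldots,P_{r+1}\) and \(P_1,\ldots,P_{r+2}\) are \(r+2\) points of the arc in general position. Each set therefore determines a unique RNC, and both sets lie on \(C\). Uniqueness also holds for the full segment, which gives the uniqueness claim.
\item The projectivity \([A]\) sends \(P_0,\ldots,P_{r+1}\) to \(P_1,\ldots,P_{r+2}\). It therefore maps \(C\) onto the unique RNC through the latter points, which is \(C\) itself. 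So \([A]\) preserves \(C\).
\item \(C\) carries the \(k\)-rational point \(P_0\) (indeed \(n\) of them). A \(k\)-form of \(\PP^1\) with a rational point is split, i.e.\ a conic with a rational point is \(\PP^1\). Concretely, the canonical \(\mathcal O(1)\)-type class shows \(C\cong\PP^1_k\) with the embedding given by a complete linear system of degree \(d\). So there is \(S\in\operatorname{GL}_r(k)\) with \(S(C)=\nu_d(\PP^1)\), and the rational point \(P_0\) gives \([Sz]\in\nu_d(\PP^1(k))\).
\item \(SAS^{-1}\) preserves \(\nu_d(\PP^1)\). The stabilizer in \(\operatorname{PGL}_r\) of the standard RNC is \(\Sym^d(\operatorname{PGL}_2)\), and this holds on \(k\)-points as well. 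One way to see it: the induced automorphism of \(\PP^1_k\) lies in \(\operatorname{PGL}_2(k)\) and determines the linear map up to scalar, because the curve spans \(\PP^{r-1}\). Hence \(SAS^{-1}=c\,\Sym^d(B)\) with \(B\in\operatorname{GL}_2(k)\). Rationality of \(c\) follows by comparing an entry of \(SAS^{-1}\), which is in \(k\), with the corresponding nonzero entry of \(\Sym^d(B)\), which is in \(k\).
\end{enumerate}

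I expect step (iii) to be the main obstacle: splitting the \(k\)-form and producing \(S\) over \(k\) rather than over \(\bar k\). The plan there is to use the fact that a Brauer–Severi curve with a rational point is \(\PP^1_k\). One must also ensure the embedding is a \(k\)-form of \(\nu_d\), meaning it is given by \(|\mathcal O(d)|\) over \(k\). This holds because the linear span is defined over \(k\) and \(\operatorname{Pic}\) of \(\PP^1_k\) is generated by \(\mathcal O(1)\).

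If the paper's notion of "\(k\)-form" is more restrictive, an alternative route to step (iii) is to interpolate directly. The \(r+2\) rational points \(P_0,\ldots,P_{r+1}\) determine an explicit parametrization over \(k\), in the style of the classical construction of the RNC through \(r+2\) points. That construction gives \(S\) directly with all \(P_i\) mapped to \(\nu_d\) of rational parameters.
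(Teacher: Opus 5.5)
Your proposal is correct and follows essentially the same route as the paper. The paper also argues this way: Castelnuovo uniqueness over $\overline{k}$ applied to the two overlapping blocks of $d+3$ orbit points forces $A\Gamma=\Gamma$; the rational point $[z]$ splits the form via the genus-zero and linear-normality argument; the stabilizer $\overline{\Sym}^d(\operatorname{PGL}_2(k))$ of $\Gamma_0$ gives $SAS^{-1}=c\,\Sym^d(B)$; and the converse follows from equivariance of $\nu_d$. The one point the paper makes explicit and you leave implicit is that the equality $A\Gamma=\Gamma$, obtained over $\overline{k}$, descends to $k$ by faithful flatness; this is what justifies treating the induced automorphism in your step (iv) as an element of $\operatorname{PGL}_2(k)$.
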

\noindent
The full statement and proof are given in
Theorem~\ref{thm:rnc-rigidity}.  Its mechanism is short but rigid: two
overlapping blocks of \(r+2=d+3\) orbit points determine the same RNC, so
the orbit shift stabilizes it; the stabilizer calculation then forces the
symmetric-power representation.  Over finite fields, uniqueness also gives
Frobenius descent, and hence geometric RNC containment is already equivalent
to GRS type (Corollary~\ref{cor:absolute-rnc}).

Although the orbit-rigidity step uses the classical uniqueness and
stabilizer properties of rational normal curves, its consequences for the
full companion family are new.  It produces a complete arithmetic
classification of the GRS locus, identifies its coefficient closure as a
rational surface, realizes the normalization of the
nonzero-constant-term part as a quotient of a two-dimensional torus by
reversal, and converts Frobenius action on the generic two-point fibers into
an exact finite-field counting formula.

For companion operators over \(\F_q\), this rigidity theorem converts the geometric
problem into the three conjugacy types of
\(\operatorname{PGL}_2(\F_q)\).  Split semisimple elements yield root sets
\(\{\gamma,\gamma t,\ldots,\gamma t^{r-1}\}\) with
\(\gamma,t\in\F_q^\times\) and \(\ord(t)\ge n\).  Nonsplit semisimple
elements diagonalize over \(\F_{q^2}\), not in general over \(\F_q\), and
Frobenius reverses the progression; this gives a product of irreducible
quadratics when \(r\) is even and one linear factor together with
irreducible quadratics when \(r\) is odd.  Finally, the unipotent type gives
\(g=(X-\beta)^r\), which is MDS precisely when
\(n\le\operatorname{char}\F_q\).  These are the four mutually exclusive
arithmetic families in Theorem~\ref{thm:arithmetic-classification}, and
together with the MDS test they give the complete trichotomy of
Theorem~\ref{thm:complete-trichotomy}.

The semisimple families admit a uniform coefficient-space description.  For
every field \(k\), the morphism
\[
 \Phi_{r,k}:\mathbb G_{m,k}^{2}\longrightarrow\A_k^r,\qquad
 (\gamma,t)\longmapsto
 \coeff\!\left(\prod_{i=0}^{r-1}(X-\gamma t^i)\right)
\]
is invariant under reversal
\(\iota(\gamma,t)=(\gamma t^{r-1},t^{-1})\).  Over an algebraically
closed field \(k\), its image closure \(\GRSsurf_{r,k}\) is an irreducible
rational surface, the generic fiber of \(\Phi_{r,k}\) is exactly an
\(\iota\)-pair, and
\[
 \dim\GRSsurf_{r,k}=2,\qquad
 \operatorname{codim}_{\A_k^r}\GRSsurf_{r,k}=r-2
\]
(Theorem~\ref{thm:coefficient-surface}).  More precisely, the affine
categorical quotient
\[
 \mathbb G_{m,k}^{2}/\!/\langle\iota\rangle
\]
is the normalization of
\(\GRSsurf_{r,k}\cap D(a_0)\), in every characteristic
(Theorem~\ref{thm:normalization-open}).  The collision boundary cannot be
silently included: for every algebraically closed field \(k\), the exact
formula is
\[
 \operatorname{GRS}_{n,r}(k)
 =
 \bigl(\MDSopen_{n,r}\cap\GRSsurf_{r,k}\cap D(\Disc)\bigr)(k)
 \;\dot\cup\;
 \bigl(\MDSopen_{n,r}\cap\Upower_{r,k}^{\times}\bigr)(k),
\]
where \(\Upower_{r,k}^{\times}\) is the nonzero-constant-term part of the
closed pure-power curve \(\Upower_{r,k}^{\mathrm{cl}}\)
(Theorem~\ref{thm:geometric-grs-locus}).
For cubics this becomes the exact finite-field test
\(D_{n,3}(g)\ne0\) together with \(a_0a_2^3=a_1^3\)
(Corollary~\ref{cor:exact-cubic-grs}).
This two-dimensional locus is
forced by the cyclic orbit; it is not the parameter space of arbitrary
ordered point configurations on an RNC studied in
\cite{CaminataEtAl2018}, and we do not identify the two spaces.

The generic reversal ambiguity also makes the finite-field locus exactly
countable.
For a cyclic group \(C_M\), set
\[
 E_M(n)=\#\{t\in C_M:\ord(t)\ge n\}
       =\sum_{\substack{e\mid M\\e\ge n}}\varphi(e).
\]
If \(\mathcal G_{n,r}(q)\) denotes the monic degree-\(r\) polynomials whose
companion codes are MDS and of GRS type, then, for \(r\ge3\),
\(n\ge r+3\), and \(p=\operatorname{char}\F_q\),
\[
 |\mathcal G_{n,r}(q)|
 =
 (q-1)\mathbf 1_{\{n\le p\}}
 +\frac{q-1}{2}\bigl(E_{q-1}(n)+E_{q+1}(n)\bigr)
\]
(Theorem~\ref{thm:exact-grs-count}).  In contrast, for fixed \(r,n\) in
the same stable range, the number of monic \(g\in\F_q[X]\) defining MDS
non-GRS codes is
\[
 q^r+O_{n,r}(q^{r-1})
\]
as \(q\) tends to infinity through prime powers
(Theorem~\ref{thm:generic-nongrs}).  Thus MDS non-GRS members have density
one, whereas the GRS family remains two-dimensional.

The open MDS condition supporting both the count and the trichotomy is
canonical.  If \(Q_i\) is the coefficient vector of the remainder of
\(X^i\) modulo \(g\), then
\(H_g^{\mathrm{rem}}=(Q_0\ Q_1\ \cdots\ Q_{n-1})\) is a parity-check
matrix.  Translation along the companion orbit reduces all maximal minors
to those containing \(Q_0\); their product is a polynomial \(D_{n,r}\), and
the code is MDS exactly on the principal open set
\(\MDSopen_{n,r}=D(D_{n,r})\subseteq\A^r\)
(Theorem~\ref{thm:determinantal-mds}).  This gives both an exact recognition
test and the uniform finite-field estimate used in asymptotic genericity.

The work sits at the intersection of three established lines.  In finite
projective geometry, the relevant background on arcs and RNCs is surveyed in
\cite{Ball-Lavrauw,Hirschfeld-Thas}; our rigidity argument uses the
\(d+3\)-point form of Castelnuovo's lemma
\cite[Theorem~1.18]{Harris1992}, while equations for RNC point
configurations provide a broader moduli-theoretic setting
\cite{CaminataEtAl2018}.

Two meanings of ``cyclic'' should be separated.  Here a cyclic pair
\((A,z)\) means that \(z\) is a cyclic vector for the linear operator
\(A\); the columns of \(H_n(A,z)\) form only the marked finite segment
\(z,Az,\ldots,A^{n-1}z\).  We do not assume that the resulting code is
invariant under the length-\(n\) cyclic shift of its coordinates.  The
latter is the usual coding-theoretic notion of a cyclic code, studied for
MDS codes by Shokrollahi \cite{Shokrollahi2000} and in related cyclic,
constacyclic, and polycyclic settings in
\cite{Ball-Grassl-Rotteler,Liu-Liu-constacyclic,Shi-Li-Sepasdar-Sole}.

There is also a neighboring finite-geometric literature.  Maruta studies
cyclic arcs, in the sense adopted there, and their equivalent pseudo-cyclic
MDS codes \cite{Maruta1997}.  Our orbit arc need not close up to a complete
orbit of a finite cyclic group and is not assumed to be setwise invariant;
it may be an arbitrary finite initial segment of the projective orbit of
\([A]\).  More recently, Chen, Huang, and Wu classified regular cyclic
\((q+1)\)-arcs in \(\mathrm{PG}(3,2^m)\), for \(m\ge 3\), and established a
descent criterion for their diagonal models \cite{ChenHuangWu2025}.  Their
main classification therefore has vector-space dimension \(r=4\), even
field size \(q=2^m\), full length \(n=q+1\), and a regular cyclic group of
that order.  The stable regime considered here allows general \(r\) and
finite lengths \(n\ge r+3\) over arbitrary finite fields, without a
full-orbit hypothesis.  Thus the two settings have related projective-orbit
and descent questions but different global hypotheses.

In coding theory, the original Reed--Solomon construction and its standard
GRS formulation \cite{Reed-Solomon,MacWilliams-Sloane}, automorphism groups
\cite{Joyner-Ksir-Traves}, and counting
\cite{Beelen-Glynn-Hoholdt-Kaipa} motivate the coding layer of our
formulation.  Twisted codes \cite{Beelen-Puchinger-Rosenkilde} and recent
constructions or recognition methods
\cite{AbdukhalikovDingVerma2026,Jin-Ma-Xing-Zhou,LiuLiuOggier2026,
WangLiuLuo2026} address arcs outside RNCs.  Within the companion-operator
family considered here, we identify the GRS subvariety and prove that its
MDS non-GRS complement is asymptotically generic.

The principal contributions are the following.
\begin{enumerate}[leftmargin=2.4em,label=\textup{(\arabic*)}]
\item We develop an intrinsic cyclic-pair theory of Krylov codes and prove
the RNC rigidity theorem: in the stable range, RNC containment is
equivalent to membership in the symmetric-power image of
\(\operatorname{PGL}_2\).
\item We classify the finite-field companion specializations by the split
semisimple, nonsplit semisimple, and unipotent projective types, including
the parity-sensitive Frobenius reversal in the nonsplit case.
\item We construct the two-dimensional coefficient surface, identify the
normalization of its nonzero-constant-term part as the affine torus quotient
by reversal, determine the correct collision-free GRS locus, and derive
Frobenius descent together with the exact counting formula.
\item We identify the determinantal MDS open set and combine it with the
coefficient surface to obtain a computable MDS/GRS trichotomy and the
asymptotic genericity of MDS non-GRS companion codes.
\end{enumerate}

The paper is organized as follows.  Section~\ref{sec:cyclic-pairs}
develops cyclic pairs and intrinsic Krylov codes.
Section~\ref{sec:mds-locus} constructs the determinantal MDS open set and
its remainder and root-jet models.  Section~\ref{sec:rnc-rigidity} proves
the orbit-rigidity theorem, and Section~\ref{sec:finite-field-classification} gives the
finite-field arithmetic classification.  Section~\ref{sec:coefficient-surface}
studies the coefficient surface and its normalization, the collision boundary,
Frobenius descent, exact counting, and asymptotic genericity.
Section~\ref{sec:principal-ideal-application} recovers the
principal-ideal realization and presents three reproducible examples.
Section~\ref{sec:conclusion} records the conclusions and remaining open
problems.
Appendix~\ref{app:boundary-cases} treats the boundary ranges, and
Appendix~\ref{app:stabilizer} supplies the stabilizer and
symmetric-power calculations.
 \section{Cyclic pairs and Krylov codes}
\label{sec:cyclic-pairs}

Throughout, \(k\) is a field and \(V\) is an \(r\)-dimensional
\(k\)-vector space.  The coefficient weight of a polynomial is the number
of its nonzero coefficients.  We use column-vector conventions; changing
those conventions only transposes the companion matrices below.

\subsection{Cyclic operators and independence of the cyclic vector}

\begin{definition}[Cyclic pair and Krylov code]
\label{def:cyclic-pair}
Let \(A\in\operatorname{End}_k(V)\).  A vector \(z\in V\) is
\emph{cyclic} for \(A\) if
\[
 z,Az,\ldots,A^{r-1}z
\]
is a basis of \(V\).  In this case we call \((A,z)\) a cyclic pair.  For
\(n>r\), after choosing any basis of \(V\), set
\[
 H_n(A,z):=(z,Az,\ldots,A^{n-1}z)
\]
and
\[
 \mathcal K_n(A,z):=\ker H_n(A,z)\subseteq k^n.
\]
We call \(\mathcal K_n(A,z)\) the \emph{Krylov code} of the cyclic pair.
\end{definition}

The apparent choices in this construction disappear intrinsically.

\begin{theorem}[Intrinsic nature of the Krylov code]
\label{thm:intrinsic-krylov}
Let \(A\) be cyclic on \(V\), let \(z\) be cyclic, and let \(m_A\) be the
minimal polynomial of \(A\).  Then:
\begin{enumerate}[label=\textup{(\roman*)}]
\item \(\mathcal K_n(A,z)\) is independent of the basis of \(V\);
\item if \(w\) is any other cyclic vector for \(A\), then
      \(\mathcal K_n(A,w)=\mathcal K_n(A,z)\);
\item if \(A'=SAS^{-1}\) and \(z'=Sz\) for some
      \(S\in\operatorname{GL}(V)\), then
      \(\mathcal K_n(A',z')=\mathcal K_n(A,z)\);
\item the code is determined by the similarity class of \(A\), and hence,
      for cyclic \(A\), by the monic polynomial \(m_A\).
\end{enumerate}
\end{theorem}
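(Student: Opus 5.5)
The plan is to describe the kernel of \(H_n(A,z)\) directly in terms of polynomials and the minimal polynomial \(m_A\). For \(c=(c_0,\ldots,c_{n-1})\in k^n\) put \(f_c(X)=\sum_i c_iX^i\). Then
\[
 H_n(A,z)\,c=\sum_{i=0}^{n-1}c_iA^iz=f_c(A)z .
\]
The key claim is that, for a cyclic vector \(z\),
\[
 f(A)z=0 \iff m_A\mid f .
\]
First, if \(m_A\mid f\), then \(f(A)=0\). Conversely, suppose \(f(A)z=0\). Since \(f(A)\) commutes with every \(A^j\), it follows that \(f(A)A^jz=A^jf(A)z=0\) for all \(j\). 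The vectors \(A^jz\) with \(0\le j\le r-1\) span \(V\), so \(f(A)=0\), and therefore \(m_A\mid f\). This gives the characterization
\[
 \mathcal K_n(A,z)=\{c\in k^n:\ m_A\mid f_c\},
\]
which uses only \(A\) and never the particular cyclic vector \(z\). Because \(z\) is cyclic, \(m_A\) has degree \(r\) and equals the characteristic polynomial. That fact is not really needed here.

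With this characterization the four items follow quickly.

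\emph{(i)} The kernel of \(c\mapsto\sum c_iA^iz\) is defined by a linear map \(k^n\to V\) that makes no reference to a basis. Choosing a basis only composes this map with an isomorphism \(V\to k^r\), which does not change the kernel.

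\emph{(ii)} The characterization holds for every cyclic vector \(w\) as well as for \(z\), so \(\mathcal K_n(A,w)=\mathcal K_n(A,z)\).

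\emph{(iii)} We have \(f(A')z'=Sf(A)S^{-1}Sz=Sf(A)z\), and \(S\) is injective, so the two kernels coincide. Alternatively, similar operators have the same minimal polynomial, and the characterization applies directly.

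\emph{(iv)} By the characterization, the code depends only on \(m_A\), and \(m_A\) is a similarity invariant. Conversely, two cyclic operators with the same minimal polynomial \(m\) of degree \(r\) are both similar to the companion matrix \(T_m\). So the similarity class of \(A\) and the monic polynomial \(m_A\) carry the same information.

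The only substantive step is the implication \(f(A)z=0\Rightarrow f(A)=0\), which is the commutation argument above. Everything else is bookkeeping, so I do not expect a real obstacle.
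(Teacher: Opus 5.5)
Your proof is correct, but it is organized differently from the paper's.

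\textbf{Your route.} You prove one lemma: the annihilator of a cyclic vector is exactly $(m_A)$, via the commutation argument. All four items then follow from the description $\mathcal K_n(A,z)=\{c\in k^n: m_A\mid f_c\}$. This description visibly involves neither the basis, nor $z$, nor anything beyond $m_A$.

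\textbf{The paper's route.} The paper starts from the same fact, phrased as the module isomorphism $k[X]/(m_A)\to V$, $f\mapsto f(A)z$, but treats the items separately.
For (ii), it writes any cyclic $w$ as $f(A)z$ with $\gcd(f,m_A)=1$. Then $f(A)$ is invertible, commutes with $A$, and gives $H_n(A,w)=f(A)H_n(A,z)$.
For (iii), it uses $H_n(SAS^{-1},Sz)=SH_n(A,z)$.
For (iv), it appeals to the invariant-factor fact that cyclic operators with the same minimal polynomial are similar.

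\textbf{Comparison.} Your route is more economical. It proves dependence on $m_A$ directly, so (iv) needs no rational canonical form. Your remark about similarity to the companion matrix is only needed to say that $m_A$ and the similarity class carry the same information. Your description also already contains the sparse-multiple criterion of Theorem~\ref{thm:krylov-mds} and the companion realization of Proposition~\ref{prop:companion-realization}, taken with $g=m_A$.

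The paper's route buys explicitness. It describes all cyclic vectors as unit multiples $f(A)z$. It also gives a concrete invertible intertwiner commuting with $A$ that relates the two Krylov matrices. Abstractly this adds nothing, since two full-rank $r\times n$ matrices with the same kernel are row-equivalent anyway.
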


\begin{proof}
A change of basis in \(V\) left-multiplies \(H_n(A,z)\) by an invertible
matrix and therefore does not change its kernel.  Since \(z\) is cyclic,
the map
\[
 k[X]/(m_A)\longrightarrow V,\qquad f\longmapsto f(A)z,
\]
is an isomorphism of \(k[X]\)-modules.  Thus every \(w\in V\) has the
form \(f(A)z\).  It is cyclic precisely when the class of \(f\) is a unit
in \(k[X]/(m_A)\), or equivalently when \(\gcd(f,m_A)=1\).  In that case
\(f(A)\) is invertible and commutes with \(A\), and hence
\[
 H_n(A,w)
 =\bigl(f(A)z,Af(A)z,\ldots,A^{n-1}f(A)z\bigr)
 =f(A)H_n(A,z).
\]
This proves the second assertion.  Similarly,
\[
 H_n(SAS^{-1},Sz)=S H_n(A,z),
\]
which proves simultaneous-similarity invariance.  Finally, a cyclic
operator has one invariant factor, namely \(m_A\), so cyclic operators
with the same monic minimal polynomial are similar.
\end{proof}

We next isolate the MDS condition before imposing companion coordinates.

\begin{theorem}[MDS criterion for a cyclic pair]
\label{thm:krylov-mds}
Let \(A\) be cyclic on \(V\), let \(z\) be cyclic, and let \(n>r\).  Then
\(\mathcal K_n(A,z)\) has dimension \(n-r\).  The following conditions
are equivalent:
\begin{enumerate}[label=\textup{(\roman*)}]
\item \(\mathcal K_n(A,z)\) is an \([n,n-r,r+1]\) MDS code;
\item every \(r\) columns of \(H_n(A,z)\) are linearly independent;
\item there is no nonzero polynomial
      \[
      f(X)=\sum_{i=0}^{n-1}c_iX^i
      \]
      having coefficient weight at most \(r\) and satisfying \(m_A\mid f\).
\end{enumerate}
If these conditions hold, then \(A\) is invertible and
\[
 [z],[Az],\ldots,[A^{n-1}z]\subseteq\PP(V)
\]
is an \(n\)-arc.
\end{theorem}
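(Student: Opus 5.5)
Since \(z\) is cyclic, the map \(\phi\colon k[X]/(m_A)\to V\), \(f\mapsto f(A)z\), is an isomorphism of \(k[X]\)-modules (as in the proof of Theorem~\ref{thm:intrinsic-krylov}). I will use it to translate every statement about columns of \(H_n(A,z)\) into a statement about polynomials modulo \(m_A\), where \(\deg m_A=r\).

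\textbf{Dimension.} The first \(r\) columns \(z,Az,\ldots,A^{r-1}z\) form a basis of \(V\). Hence \(H_n(A,z)\) has rank \(r\), and its kernel has dimension \(n-r\).

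\textbf{The equivalence (i)\(\Leftrightarrow\)(ii)\(\Leftrightarrow\)(iii).} For (i)\(\Leftrightarrow\)(ii) I invoke the standard parity-check fact: a code with full-rank parity-check matrix \(H\) of size \(r\times n\) has minimum distance \(r+1\) if and only if every \(r\) columns of \(H\) are independent. Because the dimension is \(n-r\), this minimum distance is exactly the MDS property.

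For (ii)\(\Leftrightarrow\)(iii), note that a vector \(c\in k^n\) lies in \(\mathcal K_n(A,z)\) exactly when
\[
\sum_i c_iA^iz=f(A)z=0,\qquad f(X)=\sum_{i=0}^{n-1}c_iX^i .
\]
By injectivity of \(\phi\), this holds exactly when \(m_A\mid f\). So the codewords are precisely the coefficient vectors of multiples of \(m_A\) of degree \(<n\), and the Hamming weight of a codeword equals the coefficient weight of the corresponding \(f\). Condition (ii) fails precisely when there is a nonzero dependency among some \(r\) columns. Such a dependency is a nonzero codeword of weight \(\le r\), that is, a nonzero \(f\) of coefficient weight \(\le r\) with \(m_A\mid f\).

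\textbf{Consequences.} Assume the conditions hold.

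First, \(A\) is invertible. Suppose not. Then \(X\mid m_A\), since the minimal and characteristic polynomials share their roots. Write \(m_A=Xh\) with \(\deg h=r-1\). If \(r\ge 2\), the polynomial \(f=X^{r}-X\,\cdot\,\) (something) can be tried, but it is cleaner to argue with columns. We have \(Ah(A)z=0\) with \(h(A)z\neq 0\), because \(\deg h<r\) and \(\phi\) is injective. Then the columns \(Az,\ldots,A^{r}z\) all lie in \(A(V)\). This subspace has dimension \(\le r-1\), since \(\ker A\ni h(A)z\neq 0\). So these \(r\) columns are dependent, which contradicts (ii). Here I use \(n>r\), so that \(A^{r}z\) is a column.

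Second, the points \([A^iz]\) form an \(n\)-arc. Every \(r\) columns are independent, so no column is zero and any two columns are non-proportional (as \(r\ge2\)). Hence the \(n\) projective points are distinct, and any \(r\) of them span \(\PP(V)\). This is the definition of an \(n\)-arc in \(\PP^{r-1}\).

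\textbf{Main obstacle.} The only delicate point is the invertibility step. I will use the subspace \(A(V)\) argument above rather than a weight-counting polynomial construction, since the naive choice \(f=m_A\) has weight possibly \(r+1\) and gives nothing. The subspace argument needs only \(n\ge r+1\), which holds by hypothesis.
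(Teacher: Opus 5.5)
Your proof is correct and follows the paper's argument essentially step for step. You get rank $r$ from the cyclic basis, use the annihilator ideal $(m_A)$ to identify dependencies among at most $r$ columns with sparse multiples, and use the observation that $Az,\ldots,A^rz$ lie in $A(V)$ to get invertibility; the paper additionally spells out enlarging a dependency on fewer than $r$ columns to exactly $r$ columns using $n>r$, which your ``precisely'' leaves implicit.

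Your closing remark is mistaken, though. If $A$ is singular, then $m_A=Xh$ has zero constant term, so $f=m_A$ itself has degree $r<n$ and coefficient weight at most $r$, and it violates (iii) directly. The naive choice therefore works, and it is exactly the argument the paper uses for $a_0=0$ in Theorem~\ref{thm:determinantal-mds}.
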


\begin{proof}
The first \(r\) columns form a basis, so \(H_n(A,z)\) has rank \(r\) and
its kernel has dimension \(n-r\).  The equivalence of the first two
conditions is the parity-check characterization of an MDS code.  A
dependence among at most \(r\) columns is the same as a nonzero polynomial
\(f\) of degree \(<n\) and coefficient weight at most \(r\) for which
\(f(A)z=0\).  The annihilator of the cyclic vector \(z\) is the ideal
\((m_A)\), so this happens precisely when \(m_A\mid f\).  If the dependent
set has fewer than \(r\) columns, then \(n>r\) allows us to adjoin distinct
columns until it has size \(r\); the enlarged set remains dependent.
Consequently, the absence of a dependence among exactly \(r\) columns is
equivalent to the absence of one among at most \(r\) columns.

If the code is MDS, then \(Az,A^2z,\ldots,A^rz\) are independent.  They
are the images under \(A\) of the basis \(z,Az,\ldots,A^{r-1}z\), so
\(A\) is invertible.  The assertion about the projective orbit is exactly
the nonvanishing of all \(r\)-column minors.
\end{proof}

\subsection{Companion realization}

Let
\[
 g(X)=X^r+a_{r-1}X^{r-1}+\cdots+a_0\in k[X]
\]
be monic.  Write \(T_g\) for multiplication by \(X\) on \(k[X]/(g)\),
and let \(\bar 1\) be the residue class of \(1\).

\begin{proposition}[Companion realization]
\label{prop:companion-realization}
The pair \((T_g,\bar 1)\) is cyclic, and both the minimal and
characteristic polynomials of \(T_g\) are \(g\).  Moreover,
\[
 \mathcal K_n(T_g,\bar 1)
 =\left\{(c_0,\ldots,c_{n-1})\in k^n:
     g(X)\mid\sum_{i=0}^{n-1}c_iX^i\right\}.
\]
Under coefficient-vector identification this is
\[
 \mathcal K_n(T_g,\bar 1)
 =\{u(X)g(X):\deg u<n-r\}.
\]
We henceforth abbreviate this companion Krylov code by
\(\mathcal C_g\).
\end{proposition}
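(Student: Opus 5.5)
The plan is to work directly in the quotient ring \(R=k[X]/(g)\) with the monomial basis \(\bar 1,\bar X,\ldots,\bar X^{r-1}\).

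\textbf{Cyclicity and the two polynomials.} By definition \(T_g^i\bar 1=\bar X^i\). For \(0\le i\le r-1\) these classes form the standard basis of \(R\), because division by the monic polynomial \(g\) gives a unique remainder of degree \(<r\). Hence \((T_g,\bar 1)\) is a cyclic pair. For any \(f\in k[X]\) we have \(f(T_g)\bar 1=\overline{f}\). So \(f(T_g)=0\) forces \(g\mid f\), and conversely \(g(T_g)\) kills \(\bar 1\), and then every \(\bar X^i\) since it commutes with \(T_g\). Thus the minimal polynomial is \(g\). The characteristic polynomial is monic of degree \(r\) and divisible by the minimal polynomial (Cayley--Hamilton), so it also equals \(g\). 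Alternatively, one can write out the companion matrix in this basis and expand along the last column.

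\textbf{The kernel description.} A vector \((c_0,\ldots,c_{n-1})\) lies in \(\ker H_n(T_g,\bar 1)\) exactly when
\[
\sum_i c_iT_g^i\bar 1=\overline{\sum_i c_iX^i}=0 \quad\text{in } R,
\]
that is, exactly when \(g\mid\sum_i c_iX^i\). This is the same annihilator argument already used in Theorem~\ref{thm:krylov-mds}, now with \(m_A=g\).

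\textbf{The multiples of \(g\).} If \(f=\sum c_iX^i\) with \(\deg f<n\) and \(g\mid f\), write \(f=ug\). Then \(\deg u=\deg f-r<n-r\), with \(u=0\) allowed. Conversely, every such product \(ug\) has degree \(<n\) and coefficient vector in \(k^n\). As a consistency check, the space \(\{u:\deg u<n-r\}\) has dimension \(n-r\), which matches Theorem~\ref{thm:krylov-mds}.

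None of these steps is a serious obstacle. The only point needing care is the characteristic polynomial, which is settled by the degree and divisibility argument above.
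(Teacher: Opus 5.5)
Your proposal is correct and follows essentially the same route as the paper. Cyclicity comes from the residue basis \(\bar1,\bar X,\ldots,\overline{X^{r-1}}\); the annihilator \((g)\) gives the minimal polynomial, and degree plus Cayley--Hamilton gives the characteristic polynomial; the identity \(H_n(T_g,\bar1)c^{\mathsf T}=\overline{\sum_i c_iX^i}\) then identifies the kernel with the multiples \(ug\), \(\deg u<n-r\).
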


\begin{proof}
The classes \(\bar1,\bar X,\ldots,\overline{X^{r-1}}\) form a basis,
so \(\bar1\) is cyclic.  Its annihilator is exactly \((g)\); therefore
the minimal polynomial is \(g\), and equality of degrees gives the same
characteristic polynomial.  For \(c=(c_0,\ldots,c_{n-1})\),
\[
 H_n(T_g,\bar1)c^{\mathsf T}
 =\sum_{i=0}^{n-1}c_i\overline{X^i}.
\]
This vanishes exactly when \(g\) divides the displayed polynomial.  Since
that polynomial has degree \(<n\), every such multiple is uniquely
\(u(X)g(X)\) with \(\deg u<n-r\).
\end{proof}

The quotient-ring principal-ideal model will be recovered only in
Section~\ref{sec:principal-ideal-application}.  Proposition
\ref{prop:companion-realization} shows that the code itself depends on the
companion operator, not on an auxiliary ambient modulus.

\subsection{The projective orbit interpretation}

An \(n\)-arc in \(\PP^{r-1}(k)\) is an ordered set of \(n\) points such
that every \(r\) of them span the ambient projective space.  Thus
Theorem~\ref{thm:krylov-mds} turns MDS recognition into a question about
the finite orbit segment
\[
 [z],[Az],\ldots,[A^{n-1}z].
\]

For \(1\le m<n\), let
\[
 P_i=[s_i:t_i]\in\PP^1(\F_q)
 \qquad(1\le i\le n)
\]
be pairwise distinct, and let \(v_i\in\F_q^\times\).  The homogeneous
extended generalized Reed--Solomon code
\(\operatorname{EGRS}_m(P,\boldsymbol v)\) is the row space of
\[
 \left(
 v_i
 \begin{pmatrix}
 s_i^{m-1}\\ s_i^{m-2}t_i\\ \vdots\\ t_i^{m-1}
 \end{pmatrix}
 \right)_{1\le i\le n}.
\]
Changing the representative \((s_i,t_i)\) only changes \(v_i\).
The point \([0:1]\) gives the usual extended coordinate at infinity.
We say that an \([n,m]_q\) code is \emph{of GRS type} if it is monomially
equivalent to such a code.

\begin{proposition}[Projective characterization of GRS type]
\label{prop:projective-grs-criterion}
Let \(\mathcal C\) be an \([n,n-r]_q\) code with \(2\le r<n\), and let
\[
 H=(h_1\ \cdots\ h_n)\in\operatorname{Mat}_{r\times n}(\F_q)
\]
be a full-rank parity-check matrix.  Assume that the columns \(h_i\) are
nonzero and pairwise nonproportional; this is automatic when
\(\mathcal C\) is MDS.  The following conditions are equivalent.
\begin{enumerate}[label=\textup{(\roman*)}]
\item \(\mathcal C\) is of GRS type.
\item The points \([h_1],\ldots,[h_n]\) lie on a split rational normal
      curve (RNC) in \(\PP^{r-1}_{\F_q}\), that is, on an
      \(\F_q\)-projective image of
      \(\nu_{r-1}(\PP^1_{\F_q})\).
\item The points \([h_1],\ldots,[h_n]\) lie on an \(\F_q\)-form of an
      RNC, namely a closed \(\F_q\)-subscheme whose base change to
      \(\overline{\F}_q\) is an RNC.
\end{enumerate}
\end{proposition}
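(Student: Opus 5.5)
We prove the cycle (i)\(\Rightarrow\)(ii)\(\Rightarrow\)(iii)\(\Rightarrow\)(i). The first two steps are formal. The third needs a descent argument for \(\F_q\)-forms of rational normal curves.

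For (i)\(\Rightarrow\)(ii), use duality. The dual of a GRS code is again GRS: the dual of \(\operatorname{EGRS}_m(P,\boldsymbol v)\) is \(\operatorname{EGRS}_{n-m}(P,\boldsymbol w)\) for suitable nonzero weights \(\boldsymbol w\). Hence if \(\mathcal C\) is of GRS type, its dual \(\mathcal C^\perp\) is monomially equivalent to \(\operatorname{EGRS}_r(P,\boldsymbol w)\). The matrix \(H\) generates \(\mathcal C^\perp\). So there are \(M\in\operatorname{GL}_r(\F_q)\), a permutation \(\sigma\), and scalars \(\lambda_i\ne0\) with
\[
 h_i=\lambda_i M\,\nu_{r-1}(P_{\sigma(i)}),
 \qquad
 \nu_{r-1}([s:t])=(s^{r-1},s^{r-2}t,\ldots,t^{r-1})^{\mathsf T}.
\]
Projectively, \([h_i]\in M\nu_{r-1}(\PP^1(\F_q))\), which is a split RNC. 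This direction is reversible. If \([h_i]=[M\nu_{r-1}(P_i)]\) with \(P_i\in\PP^1(\F_q)\), the \(P_i\) are pairwise distinct because the columns are pairwise nonproportional. Then \(M^{-1}H\) is the EGRS generator matrix times an invertible diagonal matrix. So \(\mathcal C^\perp\), and hence \(\mathcal C\), is of GRS type. The step (ii)\(\Rightarrow\)(iii) is trivial.

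The main obstacle is (iii)\(\Rightarrow\)(ii): showing that an \(\F_q\)-form \(\mathcal X\) of an RNC that contains the \(n\) rational points \([h_i]\) is split. I would argue as follows.

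\begin{enumerate}[label=(\alph*)]
\item Since \(n\ge3\), the curve \(\mathcal X\) has \(\F_q\)-rational points.
\item \(\mathcal X\) is a smooth projective geometrically integral curve of genus \(0\) with a rational point, so \(\mathcal X\cong\PP^1_{\F_q}\). Alternatively, Wedderburn's theorem makes the Brauer group of \(\F_q\) trivial, so every conic over \(\F_q\) is split.
\item Fix an \(\F_q\)-isomorphism \(\phi:\PP^1_{\F_q}\to\mathcal X\). Over \(\overline{\F}_q\), the curve spans \(\PP^{r-1}\) and has degree \(r-1\). Hence \(\phi^*\mathcal O(1)\cong\mathcal O_{\PP^1}(r-1)\); this isomorphism is defined over \(\F_q\) because \(\operatorname{Pic}\PP^1_{\F_q}=\mathbb Z\).
\item The restriction map \(H^0(\PP^{r-1},\mathcal O(1))\to H^0(\PP^1,\mathcal O(r-1))\) is an \(\F_q\)-linear isomorphism. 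It is injective because \(\mathcal X\) is nondegenerate, and both spaces have dimension \(r\).
\item This isomorphism gives \(M\in\operatorname{GL}_r(\F_q)\) with \(\mathcal X=M\nu_{r-1}(\PP^1_{\F_q})\). Therefore the rational points \([h_i]\) lie on a split RNC, which is (ii).
\end{enumerate}

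If one prefers to avoid this scheme-theoretic machinery, there is a more elementary route using \(r+2\) of the points, which are in general position when \(\mathcal C\) is MDS. By the uniqueness of the RNC through \(d+3\) points in general position (Castelnuovo), that RNC is Frobenius-stable. The explicit construction through \(r+2\) rational points is then defined over \(\F_q\). This route, however, needs general position, so I would use the argument in steps (a)–(e) for the general statement.
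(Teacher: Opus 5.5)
Your proof is correct and follows essentially the paper's route. Extended GRS duality together with lifting an $\F_q$-projectivity gives (i)$\Leftrightarrow$(ii), and for (iii)$\Rightarrow$(ii) you reproduce the paper's lemma that a rational point splits an RNC form: the genus-zero form contains the rational point $[h_1]$, so it is $\PP^1_{\F_q}$, embedded by the complete linear series of $\mathcal O(r-1)$.

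The differences are minor. The paper proves the duality $\operatorname{EGRS}_m(P,\boldsymbol v)^\perp=\operatorname{EGRS}_{r}(P,\boldsymbol u)$ explicitly by Lagrange interpolation, including the point at infinity, rather than citing it. It also obtains the restriction isomorphism from linear normality after base change and treats $r=2$ separately. Your injectivity-plus-dimension count is a bit more direct and covers $r=2$ uniformly.
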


\begin{proof}
If \(\mathcal C\) is MDS, every \(r\) columns of \(H\) are independent.
A zero column or a proportional pair could be enlarged to a set of \(r\)
columns because \(n>r\), and that enlarged set would remain dependent.
This proves the parenthetical assertion about the column hypotheses.

We first record extended GRS duality in the homogeneous convention above.
Let \(m+r=n\).  After ordering the points, every point except possibly the
last has the form \([1:a_i]\); the possible last point is
\(\infty=[0:1]\).  For the finite points put
\[
 L_i=\prod_{\substack{j\ne i\\P_j\ne\infty}}(a_i-a_j).
\]
Lagrange interpolation gives
\begin{equation}
 \sum_{P_i\ne\infty}\frac{a_i^d}{L_i}
 =
 \begin{cases}
 0,&0\le d\le N-2,\\
 1,&d=N-1,
 \end{cases}
 \label{eq:lagrange-leading-coefficient}
\end{equation}
where \(N\) is the number of finite points.  If no point is at infinity,
choose \(u_i=(v_iL_i)^{-1}\).  The inner product of row \(\alpha\) of the
dimension-\(m\) evaluation matrix with row \(\beta\) of the
dimension-\(r\) matrix with multipliers \(\boldsymbol u\) is the sum in
\eqref{eq:lagrange-leading-coefficient} with
\(d=\alpha+\beta\le n-2=N-2\), and hence is zero.

If \(P_n=\infty\), use the same finite multipliers and set
\(u_n=-v_n^{-1}\).  Now \(N=n-1\).  The finite contribution is zero unless
\(\alpha=m-1\) and \(\beta=r-1\), in which case it is \(1\); the two
columns at infinity contribute only in this last case and contribute
\(v_nu_n=-1\).  Thus the two row spaces are orthogonal in both cases.
Their dimensions sum to \(n\), so
\begin{equation}
 \operatorname{EGRS}_m(P,\boldsymbol v)^\perp
 =
 \operatorname{EGRS}_r(P,\boldsymbol u)
 \label{eq:extended-grs-duality}
\end{equation}
for suitable nonzero multipliers \(\boldsymbol u\).

Suppose \textup{(i)} holds.  Taking the dual of a monomial equivalence
again gives a monomial equivalence, with the inverse-transpose monomial
matrix.  By \eqref{eq:extended-grs-duality},
\(\mathcal C^\perp\) therefore has a generator matrix obtained from an
extended GRS generator matrix of dimension \(r\) by column permutation,
nonzero column scaling, and invertible row operations.  Permutations and
column scalings do not change the underlying projective point set, while
row operations apply an \(\F_q\)-projectivity.  Hence the projective
columns of \(H\) lie on a split RNC, proving \textup{(ii)}.

Conversely, assume \textup{(ii)}.  Choose an \(\F_q\)-projectivity carrying
the containing curve to the standard RNC and lift it to
\(S\in\operatorname{GL}_r(\F_q)\).  For each \(i\), there are
\(P_i=[s_i:t_i]\in\PP^1(\F_q)\) and \(u_i\in\F_q^\times\) such that
\[
 Sh_i
 =
 u_i
 \begin{pmatrix}
 s_i^{r-1}\\s_i^{r-2}t_i\\\vdots\\t_i^{r-1}
 \end{pmatrix}.
\]
The points \(P_i\) are distinct because the \(h_i\) are pairwise
nonproportional and the Veronese map is a closed immersion.  Thus \(SH\)
generates \(\operatorname{EGRS}_r(P,\boldsymbol u)=\mathcal C^\perp\).
Applying \eqref{eq:extended-grs-duality} once more shows that
\(\mathcal C\) is an extended GRS code with suitable multipliers, proving
\textup{(i)}.

The implication \textup{(ii)}\(\Rightarrow\)\textup{(iii)} is immediate.
Under \textup{(iii)}, the containing form has the rational point
\([h_1]\).  If \(r=2\), a degree-one RNC is the whole projective line;
faithfully flat descent from \(\overline{\F}_q\) therefore makes the form
equal to \(\PP^1_{\F_q}\).  If \(r\geq3\),
Lemma~\ref{lem:rnc-rational-point-split} shows that the form is split.
Thus \textup{(ii)} holds in either case, completing the proof.
\end{proof}

For an MDS Krylov code, Theorem~\ref{thm:krylov-mds} supplies exactly the
nonzero and pairwise nonproportional hypotheses in
Proposition~\ref{prop:projective-grs-criterion}.  Thus the code is of GRS
type precisely when its projective orbit arc lies on an \(\F_q\)-form of
an RNC, which is then necessarily split.  The central geometric problem is
therefore to decide when an MDS segment of a cyclic projective orbit lies on
an RNC.  Section~\ref{sec:rnc-rigidity} answers this intrinsically.
 \section{The MDS locus}
\label{sec:mds-locus}

The intrinsic sparse-multiple criterion of
Theorem~\ref{thm:krylov-mds} becomes a polynomial condition on companion
coefficients.  This section records the coordinate models used later for
coefficient geometry and finite-field estimates.

\subsection{Remainder-coordinate parity checks}

Let
\[
 g(X)=X^r+a_{r-1}X^{r-1}+\cdots+a_1X+a_0
\]
be the universal monic degree-\(r\) polynomial.  For \(i\ge0\), write
\[
 X^i\equiv A_{0,i}+A_{1,i}X+\cdots+A_{r-1,i}X^{r-1}\pmod g
\]
and define
\[
 Q_i=(A_{0,i},A_{1,i},\ldots,A_{r-1,i})^{\mathsf T}.
\]
Thus \(Q_0,\ldots,Q_{r-1}\) are the standard basis and
\begin{equation}
 Q_{i+r}=-a_{r-1}Q_{i+r-1}-\cdots-a_1Q_{i+1}-a_0Q_i.
 \label{eq:remainder-recurrence}
\end{equation}
Every entry of \(Q_i\) is consequently an integral polynomial in
\(a_0,\ldots,a_{r-1}\).

\begin{proposition}[Remainder-coordinate parity checks]
\label{prop:remainder-matrix}
For a specialization of \(g\) over any field \(k\), the matrix
\[
 H_g^{\mathrm{rem}}=(Q_0\ Q_1\ \cdots\ Q_{n-1})
\]
is the Krylov parity-check matrix
\(H_n(T_g,\bar1)\) in the residue basis
\(\bar1,\bar X,\ldots,\overline{X^{r-1}}\).  In particular, its kernel
is the coefficient code of the degree-\(<n\) multiples of \(g\).
\end{proposition}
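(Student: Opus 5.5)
The plan is to identify the $i$-th column of the Krylov matrix $H_n(T_g,\bar1)$, written in the residue basis, with the coefficient vector $Q_i$ of $X^i \bmod g$. Both assertions then follow from Proposition~\ref{prop:companion-realization}.

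First, recall that $T_g$ is multiplication by $X$ on $k[X]/(g)$. An induction on $i$ gives $T_g^i\bar1=\overline{X^i}$ for all $i\ge0$; the case $i=0$ is trivial, and $T_g\overline{X^{i}}=\overline{X^{i+1}}$. By definition, $X^i\equiv\sum_{j}A_{j,i}X^j\pmod g$ with $\deg<r$. The remainder is unique and $\bar1,\bar X,\ldots,\overline{X^{r-1}}$ is a basis, so the coordinate vector of $\overline{X^i}$ in this basis is exactly $Q_i$. Hence $H_n(T_g,\bar1)=(Q_0\ \cdots\ Q_{n-1})=H_g^{\mathrm{rem}}$ in the residue basis.

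One point needs checking: that the universal integral polynomials $A_{j,i}$, once specialized, compute the remainder over $k$. This holds because the recurrence \eqref{eq:remainder-recurrence} follows from $X^{i}g(X)\equiv0$, and it is compatible with any ring homomorphism $\mathbb Z[a_0,\ldots,a_{r-1}]\to k$. Alternatively, division by a monic polynomial commutes with specialization.

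Finally, for the kernel statement, Theorem~\ref{thm:intrinsic-krylov}(i) shows the kernel is independent of the basis. Proposition~\ref{prop:companion-realization} then identifies $\ker H_n(T_g,\bar1)$ with the coefficient vectors of the multiples $u g$ with $\deg u<n-r$. There is no real obstacle here; the only care needed is the specialization compatibility just described.
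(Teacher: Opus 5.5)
Your proposal is correct and follows the paper's own proof: the paper likewise notes that \(Q_i\) is the coordinate vector of \(T_g^i\bar1=\overline{X^i}\) in the residue basis and then invokes Proposition~\ref{prop:companion-realization}. Your added check that the universal remainders specialize correctly, and your appeal to Theorem~\ref{thm:intrinsic-krylov}(i), are harmless extra care that the paper leaves implicit.
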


\begin{proof}
The vector \(Q_i\) is the coordinate vector of
\(T_g^i\bar1=\overline{X^i}\).  The assertion therefore follows from
Proposition~\ref{prop:companion-realization}.
\end{proof}

\subsection{Sparse multiples and the determinantal open set}

For
\[
 U=\{u_1<\cdots<u_{r-1}\}\subseteq\{1,\ldots,n-1\},
\]
put
\[
 \delta_U(g)=\det(Q_0,Q_{u_1},\ldots,Q_{u_{r-1}})
\]
and define
\begin{equation}
 D_{n,r}(g)
 =a_0\prod_{\substack{U\subseteq\{1,\ldots,n-1\}\\|U|=r-1}}
 \delta_U(g)
 \in\mathbb Z[a_0,\ldots,a_{r-1}].
 \label{eq:mds-determinant}
\end{equation}
For \(r=1\) we use \(\delta_\varnothing=1\).  The factor \(a_0\)
records invertibility of the companion operator.

\begin{theorem}[The determinantal MDS open set]
\label{thm:determinantal-mds}
Let \(1\le r<n\), and specialize \(g\) over a field \(k\).  The following
are equivalent:
\begin{enumerate}[label=\textup{(\roman*)}]
\item the companion Krylov code is an \([n,n-r,r+1]\) MDS code;
\item every \(r\) columns of \(H_g^{\mathrm{rem}}\) are independent;
\item no nonzero polynomial \(f\) satisfies
      \[
      \deg f<n,\qquad g\mid f,\qquad
      \operatorname{wt}_{\mathrm{coeff}}(f)\le r;
      \]
\item \(D_{n,r}(g)\ne0\).
\end{enumerate}
Consequently the MDS coefficient locus is the principal open subset
\[
 \mathscr M_{n,r}:=D(D_{n,r})\subseteq\A^r.
\]
\end{theorem}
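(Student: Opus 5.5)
The equivalences (i)\(\Leftrightarrow\)(ii)\(\Leftrightarrow\)(iii) come directly from Theorem~\ref{thm:krylov-mds} applied to the cyclic pair \((T_g,\bar1)\). By Proposition~\ref{prop:remainder-matrix}, \(H_g^{\mathrm{rem}}\) is the Krylov matrix \(H_n(T_g,\bar1)\) in the residue basis. By Proposition~\ref{prop:companion-realization}, the minimal polynomial is \(m_{T_g}=g\). Substituting these identifications into Theorem~\ref{thm:krylov-mds} gives the three equivalences with no further work. The case \(r=1\) needs a separate check: there the only minors are the \(1\times1\) entries \(Q_i=(-a_0)^i\). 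These are all nonzero exactly when \(a_0\ne0\), which is consistent with the convention \(\delta_\varnothing=1\).

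The real content is (ii)\(\Leftrightarrow\)(iv), and I would prove it by reducing every \(r\)-subset of columns to one that contains \(Q_0\).

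\emph{Forward direction.} Assume (ii). Every \(\delta_U(g)\) is a maximal minor of \(H_g^{\mathrm{rem}}\), so each is nonzero. Theorem~\ref{thm:krylov-mds} also shows that \(T_g\) is invertible. Since \(\det T_g=(-1)^r a_0\), this gives \(a_0\ne0\), and hence \(D_{n,r}(g)\ne0\).

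\emph{Reverse direction.} Assume \(D_{n,r}(g)\ne0\). Then \(a_0\ne0\), so \(T_g\) is invertible, and every \(\delta_U\ne0\). Take an arbitrary index set \(0\le i_1<\cdots<i_r\le n-1\) and set \(s=i_1\). By construction \(Q_{i}=T_g^{i}Q_0\) in the residue basis, so
\[
 (Q_{i_1},\ldots,Q_{i_r})=T_g^{\,s}\,(Q_0,Q_{i_2-s},\ldots,Q_{i_r-s}).
\]
Taking determinants gives
\[
 \det(Q_{i_1},\ldots,Q_{i_r})=\bigl((-1)^r a_0\bigr)^{s}\,\delta_U(g),
 \qquad U=\{i_2-s,\ldots,i_r-s\}.
\]
The set \(U\) is a genuine \((r-1)\)-subset of \(\{1,\ldots,n-1\}\), because the shifted indices are distinct, positive and at most \(n-1-s\le n-1\). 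Hence every maximal minor is nonzero, which is (ii).

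Two bookkeeping points remain. First, \(\delta_U\) has integer polynomial entries, since the recurrence \eqref{eq:remainder-recurrence} has integer coefficients; so \(D_{n,r}\in\mathbb Z[a_0,\ldots,a_{r-1}]\), and its value at a specialization over \(k\) is the product of the specialized minors. Second, as a result the MDS locus is the principal open set \(D(D_{n,r})\).

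The only delicate step is the translation identity. It requires the column indices after the shift still to lie in \(\{0,\ldots,n-1\}\), and shifting down by the smallest index \(s\) guarantees this. It also requires \(a_0\) in the product so that the forward direction captures invertibility. Otherwise the argument is routine.
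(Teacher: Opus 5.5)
Your proposal is correct and follows the paper's proof. You read off \textup{(i)}--\textup{(iii)} from Theorem~\ref{thm:krylov-mds} for $(T_g,\bar1)$, and you get \textup{(ii)}$\Leftrightarrow$\textup{(iv)} from the same translation identity $\det(Q_{i_1},\ldots,Q_{i_r})=\det(T_g)^{i_1}\delta_U(g)$. The only differences are cosmetic: you obtain $a_0\ne0$ from invertibility of $T_g$ and $\det T_g=(-1)^ra_0$, whereas the paper observes that $a_0=0$ makes $g$ itself a codeword of weight at most $r$; and writing $Q_i=T_g^{s}Q_{i-s}$ avoids using $T_g^{-1}$ in the identity.
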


\begin{proof}
The first three conditions are the companion specialization of
Theorem~\ref{thm:krylov-mds}.  It remains to justify the reduced product.
If \(a_0=0\), then \(g\), viewed as a vector of length \(n\), is a codeword
of weight at most \(r\), so the code is not MDS.  Suppose \(a_0\ne0\).
Then \(T_g\) is invertible and \(Q_i=T_g^iQ_0\).  For
\(0\le i_0<i_1<\cdots<i_{r-1}<n\), applying \(T_g^{-i_0}\) to every
column gives
\begin{equation}
 \det(Q_{i_0},\ldots,Q_{i_{r-1}})
 =\det(T_g)^{i_0}
   \det(Q_0,Q_{i_1-i_0},\ldots,Q_{i_{r-1}-i_0}).
 \label{eq:minor-shift}
\end{equation}
Hence every maximal minor is nonzero exactly when all the reduced minors
appearing in \eqref{eq:mds-determinant} are nonzero.  This is equivalent
to \(D_{n,r}(g)\ne0\).
\end{proof}

\begin{remark}
The polynomial \(D_{n,r}\) is deliberately not expanded.  Its product
description is sufficient for geometry, gives an exact finite recognition
test, and avoids large coefficient recurrences that belong to the
supplementary material.
\end{remark}

\subsection{Root jets and confluent Vandermonde matrices}

For \(F(X)=\sum_i c_iX^i\), write
\[
 D^{(j)}F(Y)=\sum_{i\ge j}\binom{i}{j}c_iY^{i-j}
\]
for the \(j\)-th Hasse derivative.  Let \(L\) be a splitting field of a
specialization of \(g\), and write
\[
 g(X)=\prod_{\nu=1}^{s}(X-\lambda_\nu)^{m_\nu},
 \qquad m_1+\cdots+m_s=r,
\]
with distinct roots \(\lambda_\nu\).  Define the \(r\times n\) jet matrix
\[
 H_g^{\mathrm{jet}}
 =\left(\binom{i}{j}\lambda_\nu^{\,i-j}\right)_
 {\substack{1\le\nu\le s,\ 0\le j<m_\nu\\0\le i<n}},
\]
where the entry is \(0\) when \(i<j\).

\begin{theorem}[Root-jet form of the MDS criterion]
\label{thm:root-jet-mds}
After scalar extension to \(L\), the jet matrix is obtained from
\(H_g^{\mathrm{rem}}\) by an invertible row transformation.  Thus the
companion Krylov code is MDS if and only if every \(r\times r\) minor of
\(H_g^{\mathrm{jet}}\) is nonzero.
\end{theorem}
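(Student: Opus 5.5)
The plan is to write down, over \(L\), an explicit \(r\times r\) matrix \(M\) with \(M\,H_g^{\mathrm{rem}}=H_g^{\mathrm{jet}}\), and to prove that \(M\) is invertible. For each pair \((\nu,j)\) with \(0\le j<m_\nu\), let \(\ell_{\nu,j}\) be the linear functional on \(L[X]/(g)\) given by
\[
 \overline{F}\longmapsto (D^{(j)}F)(\lambda_\nu).
\]
This functional is well defined. Indeed, if \(g\mid F-G\), then \((X-\lambda_\nu)^{m_\nu}\) divides \(F-G\). By the Hasse–Taylor expansion \(F(X)=\sum_j (D^{(j)}F)(\lambda_\nu)(X-\lambda_\nu)^j\), which holds in every characteristic, the values \((D^{(j)}F)(\lambda_\nu)\) for \(j<m_\nu\) therefore agree for \(F\) and \(G\). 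Take \(M\) to be the matrix whose rows are these functionals, written in the residue basis \(\bar1,\bar X,\ldots,\overline{X^{r-1}}\).

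By Proposition~\ref{prop:remainder-matrix}, the \(i\)-th column \(Q_i\) of \(H_g^{\mathrm{rem}}\) is the coordinate vector of \(\overline{X^i}\). Hence the \(((\nu,j),i)\) entry of \(MH_g^{\mathrm{rem}}\) is \(\ell_{\nu,j}(\overline{X^i})=D^{(j)}(X^i)(\lambda_\nu)=\binom{i}{j}\lambda_\nu^{i-j}\), which is exactly the corresponding entry of \(H_g^{\mathrm{jet}}\). Note that \(M\) is precisely the block of columns \(i=0,\ldots,r-1\) of \(H_g^{\mathrm{jet}}\), since \(Q_0,\ldots,Q_{r-1}\) is the standard basis. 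In other words, \(M\) is the square confluent Vandermonde matrix.

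The main step is invertibility of \(M\). I would prove it by injectivity. Suppose \(\overline F\) with \(\deg F<r\) lies in the kernel of every \(\ell_{\nu,j}\). By the Hasse–Taylor expansion, \((X-\lambda_\nu)^{m_\nu}\mid F\) for each \(\nu\). Since the roots \(\lambda_\nu\) are distinct, these factors are pairwise coprime, so \(g\mid F\), and hence \(F=0\) because \(\deg F<r\). This is the Chinese remainder theorem \(L[X]/(g)\cong\prod_\nu L[X]/((X-\lambda_\nu)^{m_\nu})\), read in jet coordinates. Using Hasse derivatives rather than ordinary derivatives is essential here, since it makes the argument characteristic-free.

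For the final claim, left multiplication by the invertible matrix \(M\) scales every \(r\times r\) minor by \(\det M\neq0\). So a minor of \(H_g^{\mathrm{jet}}\) is nonzero exactly when the corresponding minor of \(H_g^{\mathrm{rem}}\) is nonzero. Nonvanishing of all maximal minors is also unaffected by scalar extension from \(k\) to \(L\). Theorem~\ref{thm:determinantal-mds}(ii) then gives the equivalence with the MDS property.
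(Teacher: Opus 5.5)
Your proposal is correct and follows essentially the same route as the paper. Your matrix $M$ is the paper's matrix $B$ of the Hermite--Hasse map $\mathcal J_g$ in the monomial basis. You prove its invertibility by the same injectivity argument: a kernel element is divisible by $g$ and has degree $<r$. You obtain $H_g^{\mathrm{jet}}=MH_g^{\mathrm{rem}}$ because $X^i$ and its remainder modulo $g$ have the same jets, and you conclude, as the paper does, via the multiplicativity of maximal minors and Theorem~\ref{thm:determinantal-mds}.
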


\begin{proof}
The identity
\[
 F(\lambda+T)=\sum_{j\ge0}D^{(j)}F(\lambda)T^j
\]
shows that \((X-\lambda)^m\mid F\) exactly when the first \(m\) Hasse
derivatives of \(F\) vanish at \(\lambda\).  Hence the Hermite--Hasse map
\[
 \mathcal J_g:L[X]_{<r}\longrightarrow L^r,\qquad
 F\longmapsto
 \bigl(D^{(j)}F(\lambda_\nu)\bigr)_{\nu,j},
\]
is injective: an element of its kernel is divisible by \(g\) and has degree
less than \(r\).  It is therefore an isomorphism.  Let \(B\) be its matrix
in the monomial basis.  The remainder of \(X^i\) modulo \(g\) has the same
prescribed jets as \(X^i\), so
\[
 H_g^{\mathrm{jet}}=B H_g^{\mathrm{rem}}.
\]
The matrix \(B\) is invertible, and scalar extension does not change
whether a determinant defined over \(k\) vanishes.  The result follows from
Theorem~\ref{thm:determinantal-mds}.
\end{proof}

The squarefree case is a generalized Vandermonde matrix; for background on
generalized and confluent variants, see \cite{Flowe-Harris,Li-Lin}.  In
particular, the geometric-progression specialization used repeatedly below
is immediate.

\begin{lemma}[Semisimple progression orbits are MDS]
\label{lem:semisimple-orbit-mds}
Suppose that over a splitting field the roots of \(g\) can be ordered as
\[
 \gamma,\gamma t,\ldots,\gamma t^{r-1},
\]
where \(\gamma,t\ne0\) and \(\ord(t)\ge n\), with infinite order allowed.
Then the companion Krylov code is MDS.
\end{lemma}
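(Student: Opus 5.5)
We apply Theorem~\ref{thm:root-jet-mds}. The roots \(\lambda_\nu=\gamma t^{\nu}\), \(0\le\nu\le r-1\), are pairwise distinct: \(t^{\nu}=t^{\mu}\) with \(0\le\mu<\nu\le r-1\) would give \(t^{\nu-\mu}=1\), so \(\ord(t)\le\nu-\mu<r<n\), contradicting \(\ord(t)\ge n\). Hence all multiplicities are \(1\), and the jet matrix is the plain Vandermonde-type matrix
\[
 H_g^{\mathrm{jet}}=\bigl(\lambda_\nu^{\,i}\bigr)_{0\le\nu<r,\ 0\le i<n}
 =\bigl(\gamma^{i}t^{\nu i}\bigr)_{\nu,i}.
\]
It therefore suffices to show that every \(r\times r\) minor of this matrix is nonzero.

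Fix columns \(0\le i_0<i_1<\cdots<i_{r-1}\le n-1\). Pulling the scalar \(\gamma^{i_j}\) out of column \(j\) leaves the matrix \(\bigl((t^{i_j})^{\nu}\bigr)_{\nu,j}\). This is the Vandermonde matrix in the \(r\) nodes \(x_j=t^{i_j}\), taken row by row in powers \(\nu=0,\ldots,r-1\). The minor therefore equals
\[
 \prod_{j}\gamma^{i_j}\prod_{j<l}\bigl(t^{i_l}-t^{i_j}\bigr),
\]
so the key observation is that the transposed roles of exponents and nodes reduce the question to an ordinary Vandermonde determinant.

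This product is nonzero because \(\gamma\ne0\), and because \(t^{i_l}=t^{i_j}\) with \(0<i_l-i_j\le n-1\) would force \(\ord(t)\le n-1\). That contradicts the hypothesis; when \(t\) has infinite order, no positive power of \(t\) equals \(1\) at all.

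All maximal minors are thus nonzero, and Theorem~\ref{thm:root-jet-mds} gives the MDS property. Alternatively, Theorem~\ref{thm:determinantal-mds} would do, since \(a_0=\pm\prod\lambda_\nu\ne0\). There is no real obstacle. The only points needing care are confirming that the roots are simple, so that the confluent jet rows do not appear, and noting that the argument does not depend on the chosen ordering of the roots: reordering only permutes rows, which does not affect whether a minor vanishes.
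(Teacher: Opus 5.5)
Your proof is correct and follows essentially the paper's argument: the paper likewise computes each root-evaluation minor as \(\gamma^{i_1+\cdots+i_r}\prod_{a<b}(t^{i_b}-t^{i_a})\), notes it is nonzero because \(0<i_b-i_a<n\le\ord(t)\), and concludes via Theorem~\ref{thm:root-jet-mds}. Your explicit check that the roots are simple, so that no confluent jet rows appear, is a detail the paper leaves implicit.
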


\begin{proof}
For \(0\le i_1<\cdots<i_r<n\), the corresponding root-evaluation minor is
\[
 \det\bigl((\gamma t^j)^{i_b}\bigr)_{0\le j<r,\,1\le b\le r}
 =\gamma^{i_1+\cdots+i_r}
   \prod_{1\le a<b\le r}(t^{i_b}-t^{i_a}).
\]
Every factor is nonzero because
\(0<i_b-i_a<n\le\ord(t)\).  Apply
Theorem~\ref{thm:root-jet-mds}.
\end{proof}

\begin{proposition}[Characteristic-uniform nontriviality of the MDS
polynomial]
\label{prop:mds-polynomial-nonzero}
For every \(1\le r<n\) and every prime \(p\), the reduction of
\(D_{n,r}\) modulo \(p\) is a nonzero polynomial in
\(\F_p[a_0,\ldots,a_{r-1}]\).
\end{proposition}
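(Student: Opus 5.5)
To show that \(D_{n,r}\bmod p\) is nonzero, it suffices to exhibit one specialization of \(g\) over some field of characteristic \(p\) at which \(D_{n,r}\) does not vanish. Indeed, \(D_{n,r}\) has integer coefficients, and its value at a point of \(\overline{\F}_p^{\,r}\) depends only on its reduction modulo \(p\). So if the reduction were the zero polynomial, it would vanish at every point. By Theorem~\ref{thm:determinantal-mds}, nonvanishing at a point \(g\) is equivalent to the companion Krylov code \(\mathcal C_g\) being MDS. The task therefore reduces to finding, for each \(p\), a monic degree-\(r\) polynomial over a field of characteristic \(p\) whose companion code of length \(n\) is MDS.

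For this I will use Lemma~\ref{lem:semisimple-orbit-mds}. Choose a finite field \(\F_{p^m}\) with \(p^m-1\ge n\). Its multiplicative group is cyclic, so it contains an element \(t\) of order \(p^m-1\ge n\). Put \(\gamma=1\) and
\[
 g(X)=\prod_{i=0}^{r-1}(X-t^i)\in\F_{p^m}[X].
\]
This is monic of degree \(r\). Its roots \(1,t,\ldots,t^{r-1}\) are distinct because \(r<n\le\ord(t)\), and they form a geometric progression with ratio \(t\) of order at least \(n\). Lemma~\ref{lem:semisimple-orbit-mds} then shows that \(\mathcal C_g\) is MDS, so \(D_{n,r}(g)\ne0\) in \(\F_{p^m}\).

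It remains to confirm that this evaluation really computes the reduction of \(D_{n,r}\) at the coefficient vector \((a_0,\ldots,a_{r-1})\) of \(g\). The entries of each \(Q_i\) come from the recurrence \eqref{eq:remainder-recurrence}, which has integer coefficients and commutes with any ring homomorphism \(\mathbb Z[a]\to\F_{p^m}\). Each \(\delta_U\) is a determinant of such entries, and the factor \(a_0\) is a coordinate, so the same holds for the whole product. The case \(r=1\), where \(\delta_\varnothing=1\) and \(D_{n,1}=a_0\), is covered by the same argument with \(g=X-1\), since \(a_0=-1\ne0\).

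The main potential obstacle is exactly this compatibility of the universal polynomial with specialization. It is routine, because every construction involved is integral and functorial. The only remaining point is that Lemma~\ref{lem:semisimple-orbit-mds} is valid over any field, including small characteristic, and this holds because its Vandermonde factorization uses no division.
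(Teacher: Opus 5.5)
Your proposal is correct and takes essentially the same route as the paper: specialize to \(\prod_{i=0}^{r-1}(X-t^i)\) over a finite extension of \(\F_p\) with \(\ord(t)\ge n\), invoke Lemma~\ref{lem:semisimple-orbit-mds} and Theorem~\ref{thm:determinantal-mds}, and conclude from a single nonzero evaluation. The only cosmetic difference is that you take \(t\) to be a generator of \(\F_{p^m}^\times\), whereas the paper fixes some \(M\ge n\) prime to \(p\) and chooses \(t\) of order exactly \(M\) in a suitable \(\F_{p^e}\).
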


\begin{proof}
Choose an integer \(M\ge n\) with \(p\nmid M\).  There is an integer
\(e\ge1\) such that \(M\mid p^e-1\); choose
\(t\in\F_{p^e}^\times\) of multiplicative order \(M\).  Over
\(\F_{p^e}\), put
\[
 g(X)=\prod_{i=0}^{r-1}(X-t^i).
\]
Lemma~\ref{lem:semisimple-orbit-mds} shows that the companion Krylov code
of \(g\) is MDS.  Theorem~\ref{thm:determinantal-mds} therefore gives
\[
 D_{n,r}(g)\ne0.
\]
Thus the reduction of \(D_{n,r}\) cannot be the zero polynomial.  The
witnessing coefficient point need not be \(\F_p\)-rational: a single nonzero
evaluation over the finite extension \(\F_{p^e}\) already proves that the
reduction of \(D_{n,r}\) is not the zero polynomial over \(\F_p\).
\end{proof}

\begin{lemma}[Finite-field polynomial zero bound]
\label{lem:finite-field-zero-bound}
Let \(F\in\F_q[Y_1,\ldots,Y_s]\) be nonzero of total degree at most
\(\delta\).  Then
\[
 \#\{y\in\F_q^s:F(y)=0\}\le \delta q^{s-1}.
\]
\end{lemma}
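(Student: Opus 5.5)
We prove this Schwartz--Zippel type bound (in the form usually attributed to Ore) by induction on the number of variables \(s\). Nothing from the earlier sections is needed beyond elementary facts about univariate polynomials. If \(\delta=0\), then \(F\) is a nonzero constant and has no zeros, so the bound holds trivially. We may therefore assume \(\delta\ge1\).

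For the base case \(s=1\), a nonzero univariate polynomial of degree at most \(\delta\) over the field \(\F_q\) has at most \(\delta=\delta q^{0}\) roots.

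For the inductive step with \(s\ge2\), we regard \(F\) as a polynomial in \(Y_s\) with coefficients in \(\F_q[Y_1,\ldots,Y_{s-1}]\):
\[
 F=\sum_{j=0}^{e}F_j(Y_1,\ldots,Y_{s-1})\,Y_s^{\,j},\qquad F_e\ne0 .
\]
Here \(e\le\delta\) and \(\deg F_e\le\delta-e\). We split the zeros \(y=(y',y_s)\) of \(F\) according to whether \(F_e(y')\) vanishes.
\begin{enumerate}[label=\textup{(\alph*)}]
\item Suppose \(F_e(y')=0\). By the induction hypothesis there are at most \((\delta-e)q^{s-2}\) such \(y'\). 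This bound also holds, vacuously, when \(\delta-e=0\), because then \(F_e\) is a nonzero constant. Each such \(y'\) admits at most \(q\) values of \(y_s\), so this case contributes at most \((\delta-e)q^{s-1}\) zeros.
\item Suppose \(F_e(y')\ne0\). Then \(F(y',Y_s)\) is a nonzero univariate polynomial of degree \(e\), so it has at most \(e\) roots. Since there are at most \(q^{s-1}\) choices of \(y'\), this case contributes at most \(e\,q^{s-1}\) zeros.
\end{enumerate}
Adding the two contributions gives at most \((\delta-e)q^{s-1}+e\,q^{s-1}=\delta q^{s-1}\) zeros, which is the claimed bound.

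The only step requiring care is the degree bookkeeping in case (a). One must check that \(\deg F_e\le\delta-e\), which holds because every monomial in \(F_eY_s^{\,e}\) has total degree at most \(\delta\). One must also make sure the induction hypothesis is applied with degree bound \(\delta-e\) rather than \(\delta\); otherwise the two contributions would not combine to exactly \(\delta q^{s-1}\).
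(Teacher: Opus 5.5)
Your proof is correct and is essentially the paper's argument. Both induct on \(s\), expand \(F\) in \(Y_s\) with leading coefficient \(F_e\) of total degree at most \(\delta-e\), and split the zeros according to whether \(F_e(y')\) vanishes, with at most \(q\) or at most \(e\) values of \(y_s\) respectively; your explicit handling of the degenerate case \(\delta-e=0\) is a small point the paper leaves implicit.
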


\begin{proof}
We argue by induction on \(s\).  For \(s=1\), a nonzero polynomial of
degree at most \(\delta\) has at most \(\delta\) roots.  Suppose
\(s>1\), and write
\[
 F=\sum_{j=0}^{h}F_j(Y_1,\ldots,Y_{s-1})Y_s^j,
 \qquad F_h\ne0.
\]
Here \(h\le\delta\) and \(\deg F_h\le\delta-h\).  By induction, the set
\[
 Z_h=\{y'\in\F_q^{s-1}:F_h(y')=0\}
\]
has size at most \((\delta-h)q^{s-2}\).  For \(y'\notin Z_h\), the
specialization \(F(y',Y_s)\) has at most \(h\) roots; for
\(y'\in Z_h\), it has at most \(q\) roots.  Consequently,
\[
 \begin{aligned}
 \#V(F)(\F_q)
 &\le h\bigl(q^{s-1}-|Z_h|\bigr)+q|Z_h|\\
 &\le hq^{s-1}+(\delta-h)q^{s-1}
 =\delta q^{s-1}.
 \end{aligned}
\]
\end{proof}
 \section{Rigidity on rational normal curves}
\label{sec:rnc-rigidity}

Let \(d\ge2\).  The standard rational normal curve (RNC) in
\(\PP^d_k\) is
\[
 \Gamma_0=\nu_d(\PP^1_k),\qquad
 \nu_d([s:t])
   =[s^d:s^{d-1}t:\cdots:st^{d-1}:t^d].
\]
After extension of scalars, we use the same notation for the resulting
curve.  A set of points in \(\PP^d\) is in \emph{linearly general
position} if every \(d+1\) of them span \(\PP^d\).

\begin{definition}[\(k\)-forms and split RNCs]
\label{def:rnc-forms}
A closed \(k\)-subscheme
\(\Gamma\subseteq\PP^d_k\) is a \emph{\(k\)-form of an RNC} if
\(\Gamma_{\overline k}\) is projectively equivalent over
\(\overline k\) to \((\Gamma_0)_{\overline k}\).  The form is
\emph{split} if there exists an element of
\(\operatorname{PGL}_{d+1}(k)\) carrying \(\Gamma\) to \(\Gamma_0\).
We use \emph{geometric RNC} for an RNC after base change to an algebraic
closure.
\end{definition}

\begin{lemma}[A rational point splits an RNC form]
\label{lem:rnc-rational-point-split}
Let \(\Gamma\subseteq\PP^d_k\) be a \(k\)-form of an RNC.  If
\(\Gamma(k)\ne\varnothing\), then \(\Gamma\) is split.
\end{lemma}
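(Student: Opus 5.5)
The plan is to show that \(\Gamma\) is isomorphic over \(k\) to \(\PP^1_k\), and that this isomorphism is cut out by the complete linear system of degree \(d\). A split form then follows formally. Set \(G=\operatorname{Aut}(\PP^d)\cap\operatorname{Stab}(\Gamma_0)\). Since \(d\ge2\), restriction to \(\Gamma_0\cong\PP^1\) identifies \(G\) with \(\operatorname{PGL}_2\) acting through \(\Sym^d\); this is the stabilizer calculation referred to in Appendix~\ref{app:stabilizer}, and I will take it as given. Forms of \(\Gamma_0\subseteq\PP^d\), meaning pairs (ambient \(\PP^d_k\), embedded curve) up to \(\operatorname{PGL}_{d+1}(k)\), are then classified by the kernel of \(H^1(k,\operatorname{PGL}_2)\to H^1(k,\operatorname{PGL}_{d+1})\). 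That cohomological route is one option. I prefer the following elementary route, which avoids Galois cohomology.

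\textbf{Step 1.} First I show that \(\Gamma\) is a smooth, projective, geometrically integral curve of genus \(0\). All of these properties descend from \(\overline k\) (for a general field one should use \(k^{\mathrm{sep}}\), or fpqc descent of the properties). Since \(\Gamma(k)\neq\varnothing\), a genus-zero curve with a rational point is isomorphic to \(\PP^1_k\). Concretely, the divisor of a rational point \(P\) has \(h^0(\mathcal O(P))=2\) by Riemann--Roch over \(k\), since cohomology commutes with flat base change, and it gives an isomorphism \(\phi:\PP^1_k\to\Gamma\).

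\textbf{Step 2.} Next I let \(\mathcal L=\mathcal O_{\PP^d}(1)|_\Gamma\). Over \(\overline k\), \(\mathcal L\) has degree \(d\), because \(\Gamma_{\overline k}\) is an RNC of degree \(d\). Hence \(\phi^*\mathcal L\cong\mathcal O_{\PP^1_k}(d)\), since \(\operatorname{Pic}\PP^1_k=\mathbb Z\) is detected by degree.

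\textbf{Step 3.} The restriction map \(H^0(\PP^d_k,\mathcal O(1))\to H^0(\Gamma,\mathcal L)\cong H^0(\PP^1_k,\mathcal O(d))\) is a \(k\)-linear map between spaces of dimension \(d+1\). After base change it is injective, because \(\Gamma_{\overline k}\) is nondegenerate, so it is an isomorphism already over \(k\). Now choose the monomial basis \(s^d,\dots,t^d\) on the right and pull it back to a basis of linear forms on \(\PP^d_k\). This defines \(S\in\operatorname{GL}_{d+1}(k)\) whose projective class carries \(\Gamma\) onto \(\nu_d(\PP^1_k)=\Gamma_0\). Equality of the subschemes holds because both are reduced, and their images agree on \(\overline k\)-points.

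\textbf{Main obstacle.} The delicate point is Step 1 over imperfect or non-perfect \(k\). There I must verify that smoothness and genus descend, and that a rational point yields \(\Gamma\cong\PP^1_k\), without appealing to separable closure arguments that fail for purely inseparable phenomena. I would handle this by working with the scheme \(\Gamma\) directly. Smoothness and geometric integrality are fpqc-local, and \(h^0,h^1\) are invariant under field extension. With those in hand, the standard conic/\(\PP^1\) argument via \(\mathcal O(P)\) goes through over any field.
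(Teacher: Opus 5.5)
Your proposal is correct and follows the paper's proof step for step. Genus zero plus a rational point gives $\Gamma\simeq\PP^1_k$, and the hyperplane bundle pulls back to $\mathcal O_{\PP^1}(d)$. The restriction map on global sections is an isomorphism over $k$ because it is one over $\overline{k}$: you obtain this from nondegeneracy plus a dimension count, the paper from linear normality. A basis change in $\operatorname{PGL}_{d+1}(k)$ then carries $\Gamma$ to $\Gamma_0$. The Galois-cohomological route you mention is never used, and the paper does not need it either.
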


\begin{proof}
The curve \(\Gamma\) is smooth, projective, geometrically integral, and of
genus zero.  A genus-zero curve with a \(k\)-rational point is
\(k\)-isomorphic to \(\PP^1_k\); choose an isomorphism
\(\psi:\PP^1_k\to\Gamma\).  The line bundle
\[
 \psi^*\mathcal O_\Gamma(1)
\]
has degree \(d\), and hence is isomorphic to
\(\mathcal O_{\PP^1}(d)\).  Moreover, the restriction map
\[
 H^0(\PP^d_k,\mathcal O_{\PP^d}(1))
 \longrightarrow
 H^0(\Gamma,\mathcal O_\Gamma(1))
\]
becomes an isomorphism after extension to \(\overline k\), because the
standard RNC is linearly normal.  It is therefore already an isomorphism
over \(k\).  Thus the given embedding of \(\Gamma\simeq\PP^1_k\) is the
complete linear series \(|\mathcal O_{\PP^1}(d)|\).  Choosing a different
\(k\)-basis of its \(d+1\) global sections amounts to applying an element
of \(\operatorname{PGL}_{d+1}(k)\), so the embedding is
\(k\)-projectively equivalent to \(\nu_d\).  Hence \(\Gamma\) is split.
\end{proof}

\subsection{Uniqueness of the containing curve}

\begin{lemma}[\(d+3\)-point uniqueness]
\label{lem:rnc-uniqueness}
Let \(K\) be an algebraically closed field of arbitrary characteristic,
and let
\[
 P_0,\ldots,P_{d+2}\in\PP^d(K)
\]
be in linearly general position.  If these points lie on an RNC, then
that RNC is unique.
\end{lemma}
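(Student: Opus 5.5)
We will reduce the uniqueness to an interpolation statement about parametrizations of \(\PP^1\). Suppose \(\Gamma_1\) and \(\Gamma_2\) are RNCs through \(P_0,\ldots,P_{d+2}\).

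\textbf{Normalizing the first curve.} Since \(\Gamma_1\) is projectively equivalent to \(\Gamma_0\), we apply an element of \(\operatorname{PGL}_{d+1}(K)\) and assume \(\Gamma_1=\Gamma_0\). Then \(P_i=\nu_d(x_i)\) for pairwise distinct \(x_i\in\PP^1(K)\); they are distinct because the points are distinct, being in general position. Using \(\operatorname{PGL}_2\), which acts on \(\Gamma_0\) through \(\Sym^d\), we further arrange
\[
 x_0=[1:0],\quad x_1=[0:1],\quad x_2=[1:1].
\]

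\textbf{Parametrizing the second curve.} Write \(\Gamma_2=\phi(\Gamma_0)\) with \(\phi\in\operatorname{PGL}_{d+1}(K)\), so \(P_i=\phi(\nu_d(y_i))\) for distinct \(y_i\). Reparametrizing by \(\operatorname{PGL}_2\), which is absorbed into \(\phi\) via \(\Sym^d\), we assume \(y_i=x_i\) for \(i=0,1,2\). We then need to show that \(\phi\) fixes \(\Gamma_0\).

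\textbf{Coordinate comparison.} The first \(d+1\) points \(P_0,\ldots,P_d\) form a projective frame together with \(P_{d+1}\). The idea is to compare the two parametrizations by homogeneous polynomial maps \(\PP^1\to\PP^d\) of degree \(d\), namely
\[
 F=\nu_d\qquad\text{and}\qquad G=\phi\circ\nu_d .
\]
The cleanest route is the classical argument that a degree-\(d\) parametrized curve is pinned down by its values at \(d+3\) parameters once the parameters are known. The hard part is that the parameters \(y_i\) for \(i\ge3\) are not a priori equal to the \(x_i\). I would address this with cross-ratios.

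\textbf{Cross-ratio step (the main obstacle).} For an RNC \(\Gamma\) and points \(Q_1,\ldots,Q_4\) on it, projecting from \(d-1\) further points of \(\Gamma\) sends \(\Gamma\) birationally onto a conic. Iterating this down to \(\PP^1\) shows that the cross-ratio of the parameters of \(Q_1,\ldots,Q_4\) equals the cross-ratio of the hyperplanes spanned by \(Q_j\) together with \(d-1\) fixed other points. This is the classical fact that \(\Gamma\) is the locus of points \(P\) for which the pencil-type configuration of hyperplanes \(\langle P, P_{j_1},\ldots,P_{j_{d-1}}\rangle\) has fixed cross-ratio.

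The hyperplane description is intrinsic to the point set \(\{P_i\}\); it uses general position so that the spans are hyperplanes and the projections are defined. Hence \(\operatorname{CR}(x_0,x_1,x_2,x_i)=\operatorname{CR}(y_0,y_1,y_2,y_i)\), and therefore \(y_i=x_i\) for all \(i\). To avoid subtleties in characteristic \(2\), I would do this projection argument by explicit computation in coordinates: project from \(\nu_d(x_3),\ldots,\nu_d(x_{d+1})\), and verify that the resulting map \(\PP^1\to\PP^1\) is a Möbius transformation sending \(x\mapsto\) the hyperplane through \(\nu_d(x)\).

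\textbf{Conclusion.} Now \(\phi\) fixes \(\nu_d(x_i)\) for all \(i\). The points \(\nu_d(x_0),\ldots,\nu_d(x_{d+1})\) form a frame, so \(\phi\) is determined by its action on them. Since \(\operatorname{id}\) also fixes these points, \(\phi=\operatorname{id}\) in \(\operatorname{PGL}_{d+1}(K)\).

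Here the \((d+3)\)-rd point \(P_{d+2}\) is what forces the parameters: we use \(d+2\) points as a frame, and the extra point is needed for the cross-ratio matching. I would check carefully whether the frame argument needs \(d+2\) or \(d+3\) points, and adjust the cross-ratio step so that exactly \(P_0,\ldots,P_{d+2}\) are used. Alternatively, I would cite Castelnuovo's lemma \cite[Theorem~1.18]{Harris1992}, as mentioned in the introduction, and note that its proof is characteristic-free.
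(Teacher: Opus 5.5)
Your argument is correct once the projection step below is fixed, and it takes a genuinely different route from the paper. The paper proves nothing here; it cites the \(d+3\)-point form of Castelnuovo's lemma \cite[Theorem~1.18]{Harris1992}. There the proof is a coordinate computation: place \(P_0,\ldots,P_d\) at the coordinate points, note that any parametrization of an RNC through them has the form \([a_0/(t-b_0):\cdots:a_d/(t-b_d)]\), and let the last two points determine the \(a_i,b_i\). Your fallback citation is therefore the paper's route. Your main argument is instead a self-contained proof by cross-ratios of hyperplane pencils.

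That argument makes the paper's claim of characteristic-independence transparent. A hyperplane \(\sum c_jX_j=0\) pulls back under \(\nu_d\) to the binary form \(\sum c_js^{d-j}t^j\). Hence hyperplanes through \(\nu_d(a_1),\ldots,\nu_d(a_{d-1})\) correspond to the forms \(\ell_{a_1}\cdots\ell_{a_{d-1}}\ell\) with \(\ell\) linear, where \(\ell_a\) is the linear form vanishing at \(a\). The map \(x\mapsto[\ell_x]\) is a linear isomorphism \(\PP^1\to\PP(K[s,t]_1)\). Only unique factorization and projective invariance of cross-ratio are used, so characteristic two causes no trouble. Your route gives only uniqueness, whereas Harris's computation also gives existence, but uniqueness is all the paper needs.

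The projection step, as written, does not give what you claim. With the single centre \(\langle\nu_d(x_3),\ldots,\nu_d(x_{d+1})\rangle\), the four remaining points are \(P_0,P_1,P_2,P_{d+2}\). You therefore obtain only \(y_{d+2}=x_{d+2}\), so \(\phi\) is known to fix just four points, which is not a frame for \(d\ge3\). Instead, for each \(i\in\{3,\ldots,d+1\}\) project from \(\Lambda_i=\langle P_j:3\le j\le d+2,\ j\ne i\rangle\), which is spanned by exactly \(d-1\) of the points. General position makes \(\Lambda_i\) a \((d-2)\)-plane and makes the hyperplanes \(\langle\Lambda_i,P_j\rangle\), \(j\in\{0,1,2,i\}\), pairwise distinct. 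So \(\operatorname{CR}(x_0,x_1,x_2,x_i)=\operatorname{CR}(y_0,y_1,y_2,y_i)\), and hence \(y_i=x_i\). Then \(\phi\) fixes the frame \(P_0,\ldots,P_{d+1}\) and is the identity.

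This also settles your \(d+2\) versus \(d+3\) question: the frame uses \(d+2\) points, and \(P_{d+2}\) enters through every \(\Lambda_i\). A minor slip: projecting from \(d-1\) points of the curve lands on \(\PP^1\), not on a conic; a conic arises from projecting from \(d-2\) points.
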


This is the characteristic-independent \(d+3\)-point form of
Castelnuovo's lemma; see \cite[Theorem~1.18]{Harris1992} and the
algebraic-closure formulation in \cite[Section~2]{CaminataEtAl2018}.
We apply it only after base change to an algebraic closure.  In particular,
the choice of a projective frame is made over that algebraic closure, and
the lemma imposes no restriction on the size of the ground field.

\subsection{The symmetric-power stabilizer}

Write
\[
 \boldsymbol\nu_d(s,t)
  =(s^d,s^{d-1}t,\ldots,st^{d-1},t^d)^{\mathsf T}.
\]
For \(B\in\operatorname{GL}_2(k)\), the identity
\[
 \boldsymbol\nu_d\!\left(B(s,t)^{\mathsf T}\right)
   =\Sym^d(B)\boldsymbol\nu_d(s,t)
\]
fixes our convention for the symmetric-power representation.  Since
\(\Sym^d(aB)=a^d\Sym^d(B)\), it induces a projective representation
\[
 \overline{\Sym}^d:\operatorname{PGL}_2(k)
       \longrightarrow\operatorname{PGL}_{d+1}(k).
\]
Proposition~\ref{prop:rnc-stabilizer} identifies its image with the
projective stabilizer of \(\Gamma_0\).  We also write
\(\rho_d=\Sym^d\).

\subsection{The orbit-rigidity theorem}

\begin{theorem}[Rational normal curve rigidity]
\label{thm:rnc-rigidity}
Let \(k\) be a field, let \(r\ge3\), let \(n\ge r+3\), and let
\((A,z)\) be a cyclic pair on an \(r\)-dimensional vector space \(V\).
Assume that \(\mathcal K_n(A,z)\) is MDS.  Put \(d=r-1\), and choose
an identification \(V\simeq k^{d+1}\); changing this choice merely
changes the conjugating map below.  Let
\[
 \Gamma_0=\nu_d(\PP^1)\subseteq\PP^d
\]
be the standard rational normal curve.  The following are equivalent.
\begin{enumerate}[label=\textup{(\roman*)}]
\item The orbit points
      \[
       [z],[Az],\ldots,[A^{n-1}z]
      \]
      lie on a \(k\)-form of an RNC in \(\PP(V)\).
\item There exist \(S\in\operatorname{GL}(V)\),
      \(B\in\operatorname{GL}_2(k)\), and \(c\in k^\times\) such that
      \[
       SAS^{-1}=c\,\rho_d(B)
      \]
      and
      \[
       [Sz]\in\Gamma_0(k).
      \]
\end{enumerate}
When these conditions hold, the containing RNC is split, is unique as a
geometric RNC, and is preserved by the projective class of \(A\).
\end{theorem}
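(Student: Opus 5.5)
The easy direction is (ii)\(\Rightarrow\)(i), and I will do it first. If \(SAS^{-1}=c\rho_d(B)\) and \([Sz]=\nu_d(P)\) with \(P\in\PP^1(k)\), then the intertwining identity \(\boldsymbol\nu_d(B(s,t)^{\mathsf T})=\Sym^d(B)\boldsymbol\nu_d(s,t)\) gives
\[
 [SA^iz]=[\rho_d(B)^iSz]=\nu_d(B^iP).
\]
Hence every orbit point lies on the split curve \(S^{-1}\Gamma_0\), which is defined over \(k\).

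For (i)\(\Rightarrow\)(ii), let \(\Gamma\) be a \(k\)-form of an RNC containing the orbit points, and pass to \(\overline k\). By Theorem~\ref{thm:krylov-mds}, MDS means the \(n\) points \([A^iz]\) form an \(n\)-arc, so any \(r=d+1\) of them span \(\PP^d\). Since \(n\ge r+3=d+4\), both blocks
\[
 \{[z],\ldots,[A^{d+2}z]\}\qquad\text{and}\qquad\{[Az],\ldots,[A^{d+3}z]\}
\]
consist of \(d+3\) points in linearly general position. By Lemma~\ref{lem:rnc-uniqueness}, \(\Gamma_{\overline k}\) is the unique RNC through the first block. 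The projectivity \([A]\) is defined because \(A\) is invertible (Theorem~\ref{thm:krylov-mds}). It sends the first block onto the second, and it sends \(\Gamma_{\overline k}\) to an RNC through the second block. The curve \(\Gamma_{\overline k}\) also passes through the second block, so uniqueness gives \([A]\Gamma_{\overline k}=\Gamma_{\overline k}\). This proves the uniqueness and invariance claims.

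Next, \(\Gamma\) has the rational point \([z]\), so Lemma~\ref{lem:rnc-rational-point-split} shows it is split. Choose \(S\in\operatorname{GL}(V)\), lifting an element of \(\operatorname{PGL}_{d+1}(k)\), with \(S\Gamma=\Gamma_0\). Then \([Sz]\in\Gamma_0(k)\), and \([SAS^{-1}]\) stabilizes \(\Gamma_0\). The stabilizer result, Proposition~\ref{prop:rnc-stabilizer} (Appendix~\ref{app:stabilizer}), identifies the projective stabilizer of \(\Gamma_0\) with \(\overline{\Sym}^d(\operatorname{PGL}_2)\). I must apply it over \(k\), not just over \(\overline k\): a \(k\)-rational element of \(\operatorname{PGL}_{d+1}\) stabilizing \(\Gamma_0\) should come from \(\operatorname{PGL}_2(k)\).

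To see this, note that the induced automorphism of \(\Gamma_0\simeq\PP^1_k\) is defined over \(k\), and \(\operatorname{Aut}(\PP^1_k)=\operatorname{PGL}_2(k)\). So it equals the automorphism induced by some \(B\in\operatorname{GL}_2(k)\). The element \([SAS^{-1}]\overline{\Sym}^d(B)^{-1}\) then fixes \(\Gamma_0\) pointwise. Since \(d+2\) points of \(\Gamma_0\) form a projective frame (with \(|k|\) possibly small, I use points over \(\overline k\)), this element is the identity. Hence \([SAS^{-1}]=[\rho_d(B)]\), and lifting projective equality to matrices gives \(SAS^{-1}=c\rho_d(B)\) for some \(c\in k^\times\). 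Note that \(c\) is \(k\)-rational because both matrices have entries in \(k\).

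I expect the main obstacle to be this descent of the stabilizer to \(k\), that is, making sure both \(B\) and the scalar \(c\) are \(k\)-rational and not just \(\overline k\)-rational. I will handle it through the \(k\)-isomorphism \(\Gamma_0\simeq\PP^1_k\) together with the frame argument, rather than through group cohomology. The hypothesis \(n\ge r+3\) is used only to produce the two overlapping \((d+3)\)-blocks.
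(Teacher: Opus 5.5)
Your proposal is correct and follows the paper's proof essentially step for step: two overlapping $(d+3)$-blocks together with Lemma~\ref{lem:rnc-uniqueness} give $A\Gamma=\Gamma$, the rational point $[z]$ splits $\Gamma$, and the stabilizer identification yields $SAS^{-1}=c\,\rho_d(B)$, while the converse follows from equivariance of $\nu_d$ (your inline frame-over-$\overline{k}$ argument is exactly the proof of Proposition~\ref{prop:rnc-stabilizer}). The only step you leave implicit is that $[A]\Gamma_{\overline k}=\Gamma_{\overline k}$ descends to $A\Gamma=\Gamma$ over $k$ by faithful flatness, which is what makes the induced automorphism of $\Gamma_0\simeq\PP^1_k$ defined over $k$.
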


\begin{proof}
Assume first that the orbit is contained in a \(k\)-form
\(\Gamma\) of an RNC.  By
Theorem~\ref{thm:krylov-mds}, \(A\) is invertible and every \(r=d+1\)
orbit vectors are linearly independent.  Hence the \(r+2=d+3\) points
\[
 [z],[Az],\ldots,[A^{r+1}z]
\]
are in linearly general position.  Lemma~\ref{lem:rnc-uniqueness},
applied over \(\overline{k}\), shows that \(\Gamma\) is the unique
geometric RNC containing them.

The curves \(\Gamma\) and \(A\Gamma\) both contain
\[
 [Az],[A^2z],\ldots,[A^{r+2}z].
\]
These are again \(d+3\) points in linearly general position; the last
one occurs because \(n\ge r+3\).  A second application of
Lemma~\ref{lem:rnc-uniqueness} after base change gives
\[
 A\Gamma=\Gamma.
\]
Equality after base change implies equality over \(k\) by faithful
flatness.  The form \(\Gamma\) contains the \(k\)-rational point \([z]\),
so Lemma~\ref{lem:rnc-rational-point-split} shows that it is split.
Choose a \(k\)-projectivity carrying \(\Gamma\) to \(\Gamma_0\), and let
\(S\in\operatorname{GL}(V)\) be a lift.  By
Proposition~\ref{prop:rnc-stabilizer}, the projective class of
\(SAS^{-1}\) belongs to
\(\overline{\Sym}^d(\operatorname{PGL}_2(k))\).  Thus there are
\(B\in\operatorname{GL}_2(k)\) and \(c\in k^\times\) such that
\[
 SAS^{-1}=c\,\rho_d(B).
\]
Since \([z]\in\Gamma(k)\), we also have
\([Sz]\in\Gamma_0(k)\).

Conversely, suppose that the displayed relation holds and that
\([Sz]\in\Gamma_0(k)\).  The symmetric-power action preserves
\(\Gamma_0\), and therefore
\[
 [SA^iz]=[c^i\rho_d(B)^iSz]\in\Gamma_0(k)
 \qquad(i\ge0).
\]
Applying \(S^{-1}\) shows that the original orbit is contained in the
RNC \(S^{-1}\Gamma_0\).  The first part of the proof then supplies both
uniqueness and invariance of the containing curve.
\end{proof}

\subsection{Absolute non-RNC consequences}

\begin{corollary}[Absolute rigidity over finite fields]
\label{cor:absolute-rnc}
Let \(k=\F_q\) and retain the hypotheses of
Theorem~\ref{thm:rnc-rigidity}.  If the orbit points are contained in
an RNC after extension to \(\overline{\F}_q\), then their unique geometric
containing RNC is the base change of a split RNC over \(\F_q\).
Consequently, an MDS Krylov code is of GRS type if and only if its orbit
arc is contained in an RNC over \(\overline{\F}_q\).
\end{corollary}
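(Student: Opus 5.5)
The plan is a Galois-descent argument for the unique geometric containing curve. The output of the descent will be an \(\F_q\)-form of an RNC containing the orbit, to which Theorem~\ref{thm:rnc-rigidity} and Proposition~\ref{prop:projective-grs-criterion} then apply.

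Let \(\overline\Gamma\subseteq\PP^d_{\overline{\F}_q}\) be a geometric RNC containing the orbit points \([z],[Az],\ldots,[A^{n-1}z]\). As in the first step of the proof of Theorem~\ref{thm:rnc-rigidity}, the MDS hypothesis and Theorem~\ref{thm:krylov-mds} make the first \(d+3=r+2\) orbit points linearly general. Lemma~\ref{lem:rnc-uniqueness} over \(\overline{\F}_q\) then shows that \(\overline\Gamma\) is the unique geometric RNC through them.

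For descent, let \(\sigma\) be the arithmetic Frobenius acting on \(\PP^d_{\overline{\F}_q}\) through the coefficients. The image \(\sigma(\overline\Gamma)\) is again an RNC, being the image of \(\nu_d\) composed with the \(\sigma\)-conjugate projectivity. The orbit points are \(\F_q\)-rational, so they are fixed by \(\sigma\), and therefore \(\sigma(\overline\Gamma)\) contains the same \(d+3\) points. Uniqueness gives \(\sigma(\overline\Gamma)=\overline\Gamma\).

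To turn this into an actual \(\F_q\)-subscheme, I will argue as follows. The homogeneous ideal of \(\overline\Gamma\) (say its degree-\(2\) part, which generates it, or the whole saturated ideal) is a \(\sigma\)-stable \(\overline{\F}_q\)-subspace of each graded piece of \(\overline{\F}_q[x_0,\ldots,x_d]\). Galois descent for vector spaces (Hilbert 90 / \(V=V^{\sigma}\otimes\overline{\F}_q\)) then shows that this ideal is spanned by polynomials with \(\F_q\)-coefficients. An alternative is to note that \(\overline\Gamma\) is already defined over some \(\F_{q^m}\), and to use descent along the finite Galois extension \(\F_{q^m}/\F_q\) with the cyclic group generated by \(\sigma\). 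Either route yields a closed \(\F_q\)-subscheme \(\Gamma\) whose base change is \(\overline\Gamma\), that is, an \(\F_q\)-form of an RNC. Since \(\Gamma\) contains the rational point \([z]\), Lemma~\ref{lem:rnc-rational-point-split} shows that it is split.

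The consequence follows by combining this with Proposition~\ref{prop:projective-grs-criterion}, whose column hypotheses hold by Theorem~\ref{thm:krylov-mds}. If the orbit arc lies on an RNC over \(\overline{\F}_q\), the descended split form contains it, so condition (ii) of that proposition holds and the code is of GRS type. Conversely, GRS type gives a split \(\F_q\)-RNC containing the arc, and its base change is a geometric RNC. For \(r\ge3\) this is the whole argument; note that the hypotheses of Theorem~\ref{thm:rnc-rigidity} already include \(r\ge3\).

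The main obstacle is the descent step. One must make precise that Frobenius-stability of a closed subscheme of \(\PP^d_{\overline{\F}_q}\) implies that it is defined over \(\F_q\). I plan to handle this with the graded-ideal/Hilbert 90 argument above. The only delicate point is that \(\sigma\) acts semilinearly on the ideal, and this is exactly the setting in which the vector-space form of Galois descent applies.
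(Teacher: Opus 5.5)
Your proposal is correct and matches the paper's proof essentially step for step. Both arguments use the same chain: uniqueness of the geometric RNC through the first \(r+2\) rational orbit points, Frobenius stability of its saturated ideal, graded Galois descent to an \(\F_q\)-form that is split by the rational point \([z]\) via Lemma~\ref{lem:rnc-rational-point-split}, and then Proposition~\ref{prop:projective-grs-criterion} for both directions of the GRS equivalence.
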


\begin{proof}
The first \(r+2\) orbit points are \(\F_q\)-rational and determine the
geometric containing RNC \(\overline\Gamma\) uniquely.  Let
\[
 I(\overline\Gamma)
 \subseteq
 \overline{\F}_q[X_0,\ldots,X_{r-1}]
\]
be its saturated homogeneous ideal.  The coefficient Frobenius
\(\sigma:a\mapsto a^q\) sends \(\overline\Gamma\) to another geometric
RNC containing the same \(r+2\) rational points.  Uniqueness gives equality
as closed subschemes,
\[
 \sigma(\overline\Gamma)=\overline\Gamma,
 \qquad
 \sigma\bigl(I(\overline\Gamma)\bigr)=I(\overline\Gamma).
\]
For every degree \(e\), the finite-dimensional subspace
\(I(\overline\Gamma)_e\) is therefore stable under the semilinear
Frobenius action.  Galois descent for vector spaces descends these graded
pieces, compatibly with multiplication, to a homogeneous ideal
\[
 I_0\subseteq\F_q[X_0,\ldots,X_{r-1}]
 \quad\text{with}\quad
 I_0\otimes_{\F_q}\overline{\F}_q=I(\overline\Gamma).
\]
Hence
\[
 \Gamma:=\operatorname{Proj}
 \bigl(\F_q[X_0,\ldots,X_{r-1}]/I_0\bigr)
\]
is an \(\F_q\)-form of an RNC whose base change is
\(\overline\Gamma\).  It contains the rational point \([z]\), so
Lemma~\ref{lem:rnc-rational-point-split} makes \(\Gamma\) split.

If the geometric containment holds, the descended split RNC contains all
projective columns of \(H_n(A,z)\); therefore
Proposition~\ref{prop:projective-grs-criterion} shows that the Krylov code
is of GRS type.  Conversely, if the code is of GRS type, that proposition
places its parity-check columns on a split RNC over \(\F_q\), and scalar
extension gives the required geometric containment.
\end{proof}

\begin{corollary}[A companion obstruction to absolute RNC containment]
\label{cor:companion-absolute-nonrnc}
Let \(g\in\F_q[X]\) be monic of degree \(r\ge3\), let \(n\ge r+3\),
and assume that the companion Krylov code
\(\mathcal K_n(T_g,\bar1)\) is MDS.  If either
\begin{enumerate}[label=\textup{(\alph*)}]
\item \(g\) has an irreducible factor of degree at least \(3\), or
\item \(g\) is not squarefree and has at least two distinct roots in
      \(\overline{\F}_q\),
\end{enumerate}
then the companion orbit arc is contained in no RNC over
\(\overline{\F}_q\).  In particular, the code is not of GRS type.
\end{corollary}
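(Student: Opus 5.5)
We argue by contradiction. Suppose the companion orbit arc lies on an RNC over \(\overline{\F}_q\). By Corollary~\ref{cor:absolute-rnc}, the containing curve then descends to a split RNC over \(\F_q\), so hypothesis \textup{(i)} of Theorem~\ref{thm:rnc-rigidity} holds. Hence there are \(S\in\operatorname{GL}_r(\F_q)\), \(B\in\operatorname{GL}_2(\F_q)\) and \(c\in\F_q^\times\) with
\[
 S T_g S^{-1}=c\,\rho_d(B),\qquad d=r-1.
\]
Because \(T_g\) is a companion operator, its characteristic polynomial \(g\) equals the characteristic polynomial of \(c\,\rho_d(B)\). So it suffices to show that no such characteristic polynomial can satisfy (a) or (b), given that \(T_g\) is cyclic.

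We compute the characteristic polynomial of \(c\,\rho_d(B)\) over \(\overline{\F}_q\) by putting \(B\) in Jordan form there. The identity \(\boldsymbol\nu_d(B(s,t)^{\mathsf T})=\Sym^d(B)\boldsymbol\nu_d(s,t)\) shows that \(\Sym^d\) is functorial, i.e. \(\Sym^d(PBP^{-1})=\Sym^d(P)\Sym^d(B)\Sym^d(P)^{-1}\). We may therefore assume \(B\) is upper triangular, with eigenvalues \(\alpha,\beta\).

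In the monomial basis \(s^{d-j}t^j\), the matrix \(\Sym^d(B)\) is triangular with diagonal entries \(\alpha^{d-j}\beta^j\). The roots of \(g\) are therefore
\[
 c\alpha^{d},\ c\alpha^{d-1}\beta,\ \ldots,\ c\beta^d ,
\]
that is, \(\gamma,\gamma t,\ldots,\gamma t^{d}\) with \(\gamma=c\alpha^d\) and \(t=\beta/\alpha\); note \(\alpha\ne0\) since \(B\) is invertible. We now split into cases.

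\emph{Case \(t=1\).} Every root equals \(\gamma\), so \(g=(X-\gamma)^r\) has a single distinct root. This excludes (b). It also excludes (a), because \(g\) then has only the linear irreducible factor \(X-\gamma\) over \(\F_q\): the element \(\gamma\) is fixed by Frobenius, being the unique root of a polynomial over \(\F_q\).

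\emph{Case \(t\ne1\).} MDS-ness forces \(\ord(t)\ge n>r\). Indeed, if \(\ord(t)\le r-1\), two roots would coincide and \(g\) would have repeated roots. We then need \(T_g\) not to be MDS, and one checks this via Theorem~\ref{thm:root-jet-mds}, or more directly: a coincidence \(t^a=1\) with \(a<n\) makes \(X^a-\gamma^{a}\) a multiple of \(\prod(X-\gamma t^i)\) only in special situations, so I would instead argue on the arc itself. The cleaner route is to diagonalize. If \(\alpha\ne\beta\), then \(B\) is diagonalizable over \(\overline{\F}_q\), so \(\rho_d(B)\) is diagonalizable and \(T_g\) is semisimple. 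A cyclic semisimple operator has squarefree minimal polynomial, so \(g\) is squarefree, and (b) fails.

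The main step is to exclude (a), i.e. to show every irreducible factor of \(g\) has degree \(\le2\). The approach is to let Frobenius \(\sigma\) act on the root set \(R=\{\gamma t^i\}_{0\le i\le d}\), where now \(t\) has order \(\ge n>r\) so the \(r\) roots are distinct. The key observation is that \(\{\alpha,\beta\}\) is the root set of the characteristic polynomial of \(B\in\operatorname{GL}_2(\F_q)\), so \(\sigma\) either fixes both \(\alpha,\beta\) or swaps them.

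If \(\sigma\) fixes both, every root \(c\alpha^{d-i}\beta^i\) lies in \(\F_q\), and \(g\) splits into linear factors. If \(\sigma\) swaps them, then
\[
 \sigma(c\alpha^{d-i}\beta^i)=c\beta^{d-i}\alpha^i ,
\]
which is the root with index \(d-i\). So \(\sigma\) acts on \(R\) as the reversal \(i\mapsto d-i\), an involution, and every Frobenius orbit in \(R\) has size at most \(2\).

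Since the roots are distinct, the irreducible factors of \(g\) over \(\F_q\) correspond exactly to the \(\sigma\)-orbits on \(R\), so they all have degree \(\le2\). This contradicts (a). In every case we have contradicted both (a) and (b), so the arc lies on no RNC over \(\overline{\F}_q\), and the code is not of GRS type by Corollary~\ref{cor:absolute-rnc}.

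The care needed is exactly this use of \(\{\alpha,\beta\}\) being Frobenius-stable, together with the observation that \(\alpha=\beta\) yields a single root.
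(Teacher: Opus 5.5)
Your proof is correct and follows essentially the same route as the paper. You reduce via Corollary~\ref{cor:absolute-rnc} and Theorem~\ref{thm:rnc-rigidity} to \(ST_gS^{-1}=c\,\Sym^{d}(B)\) and read off the spectrum \(c\alpha^{d-i}\beta^{i}\) by triangularizing, which is Lemma~\ref{lem:symmetric-power-spectrum}. You then exclude (a) because every root lies in \(\F_{q^2}\), and exclude (b) because \(\alpha\ne\beta\) forces diagonalizability, hence a squarefree minimal polynomial \(g\).

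The one loose end is your paragraph claiming that MDS-ness forces \(\ord(t)\ge n\), whose justification trails off unproved. It is not needed. Distinctness of the roots already follows from the squarefreeness you derive, and the degree bound in (a) only uses that \(\sigma^2\) fixes every root, so that paragraph can be deleted.
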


\begin{proof}
Suppose that the arc were contained in a geometric RNC.  By
Corollary~\ref{cor:absolute-rnc} and
Theorem~\ref{thm:rnc-rigidity}, there would be
\(S\in\operatorname{GL}_r(\F_q)\), \(B\in\operatorname{GL}_2(\F_q)\),
and \(c\in\F_q^\times\) such that
\[
 ST_gS^{-1}=c\,\Sym^{r-1}(B).
\]
Lemma~\ref{lem:symmetric-power-spectrum} shows that all eigenvalues of
the right-hand side belong to \(\F_{q^2}\).  Since both the minimal
and characteristic polynomials of \(T_g\) equal \(g\), this rules out
an irreducible factor of degree at least \(3\).

If \(g\) has at least two distinct geometric roots, then \(T_g\) has
at least two distinct eigenvalues.  The same spectral lemma makes
\(T_g\) diagonalizable over \(\F_{q^2}\), so its minimal polynomial is
squarefree.  This contradicts condition~\textup{(b)}.
\end{proof}
 \section{Arithmetic classification over finite fields}
\label{sec:finite-field-classification}

Throughout this section,
\[
 r\ge3,\qquad n\ge r+3,\qquad d=r-1,\qquad
 p=\operatorname{char}\F_q,
\]
unless stated otherwise.  For a monic polynomial
\(g\in\F_q[X]\) of degree \(r\), write
\[
 \mathcal C_g:=\mathcal K_n(T_g,\bar1),
\]
and write \(\ord(t)\) for the multiplicative order of a nonzero field
element.  Theorem~\ref{thm:rnc-rigidity} reduces the GRS question to
the three conjugacy types in \(\operatorname{PGL}_2(\F_q)\).  The split
semisimple type gives one family, the nonsplit semisimple type gives
two parity-dependent descent families, and the unipotent type gives
the pure-power family.

\subsection{Split and nonsplit semisimple elements}

The following elementary observation will be used in the sufficiency
part of the classification.

\begin{lemma}[Cyclic points on a semisimple RNC orbit]
\label{lem:cyclic-rnc-point}
Let \(B\in\operatorname{GL}_2(\F_q)\) have distinct eigenvalues
\(\lambda,\mu\in\F_{q^2}\), put \(t=\mu/\lambda\), and assume
\(\ord(t)>d\).  If \(P\in\PP^1(\F_q)\) is not an eigenpoint of \(B\),
then every vector representative of \(\nu_d(P)\) is cyclic for
\(c\,\Sym^d(B)\), for every \(c\in\F_q^\times\).
\end{lemma}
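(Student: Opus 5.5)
Since scalars and conjugation do not affect cyclicity, the plan is to diagonalize \(B\) over \(\F_{q^2}\) and reduce to a Vandermonde computation. Cyclicity of a vector for an operator is unchanged by extending scalars, because linear independence of \(w,Mw,\ldots,M^dw\) is the nonvanishing of a determinant defined over \(\F_q\). It is also unchanged by replacing the operator by a nonzero scalar multiple, since \(c^iM^iw\) spans the same lines as \(M^iw\). So I will work over \(\F_{q^2}\) and drop the factor \(c\). Because \(\lambda\ne\mu\), there is \(C\in\operatorname{GL}_2(\F_{q^2})\) with \(CBC^{-1}=\operatorname{diag}(\lambda,\mu)\). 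Functoriality of the convention \(\boldsymbol\nu_d(B(s,t)^{\mathsf T})=\Sym^d(B)\boldsymbol\nu_d(s,t)\) gives
\[
 \Sym^d(C)\Sym^d(B)\Sym^d(C)^{-1}=\Sym^d(\operatorname{diag}(\lambda,\mu))
 \quad\text{and}\quad
 \Sym^d(C)\boldsymbol\nu_d(P)=\boldsymbol\nu_d(CP).
\]
This reduces the problem to the diagonal operator \(\Sym^d(\operatorname{diag}(\lambda,\mu))=\operatorname{diag}(\lambda^{d-j}\mu^j)_{0\le j\le d}\) and the vector \(\boldsymbol\nu_d(s',t')\), where \([s':t']=CP\).

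The eigenpoints of \(\operatorname{diag}(\lambda,\mu)\) on \(\PP^1\) are \([1:0]\) and \([0:1]\). These are the images under \(C\) of the eigenpoints of \(B\), so the hypothesis that \(P\) is not an eigenpoint gives \(s't'\ne0\). The Krylov matrix of the diagonal operator at \(\boldsymbol\nu_d(s',t')\) has \((j,i)\) entry \((s')^{d-j}(t')^j(\lambda^{d-j}\mu^j)^i\) for \(0\le i,j\le d\). Factoring \((s')^{d-j}(t')^j\) out of row \(j\) and \(\lambda^{di}\) out of column \(i\) leaves the Vandermonde matrix \((t^{ji})\) with nodes \(1,t,\ldots,t^d\). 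Its determinant is a nonzero multiple of
\[
 \prod_{0\le a<b\le d}(t^b-t^a),
\]
and each factor is nonzero because \(0<b-a\le d<\ord(t)\). The diagonal eigenvalues \(\lambda^{d-j}\mu^j\) are also pairwise distinct by the same order condition, which is consistent with the operator being cyclic.

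The only delicate point is the bookkeeping that the eigenpoint condition survives the base change, together with the remark that \(\lambda,\mu\in\F_{q^2}\setminus\F_q\) is permitted; neither causes a real difficulty. Every vector representative of \(\nu_d(P)\) is a nonzero scalar multiple of \(\boldsymbol\nu_d(s,t)\), so the conclusion holds for all representatives and all \(c\in\F_q^\times\).
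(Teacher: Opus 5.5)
Your proof is correct and follows the paper's own argument: diagonalize \(B\) over \(\F_{q^2}\), use that both eigen-coordinates of \(P\) are nonzero, strip the row and column scalars from the Krylov matrix to reach the Vandermonde matrix with nodes \(1,t,\ldots,t^d\), and transfer linear independence from \(\F_{q^2}\) back to \(\F_q\). You only make explicit what the paper leaves implicit, namely the conjugation by \(\Sym^d(C)\) and why the scalar \(c\) is harmless. One small point: in your last line \(\boldsymbol\nu_d(s,t)\) reuses the letter \(t\), which you had already reserved for \(\mu/\lambda\); the paper avoids this clash by writing the representative as \((s,u)\).
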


\begin{proof}
Diagonalize \(B\) over \(\F_{q^2}\), and write a representative of
\(P\) as \((s,u)\) in an eigenbasis.  The hypothesis on \(P\) says
\(su\ne0\).  Up to nonzero column factors, the vectors
\[
 \nu_d(P),\ \Sym^d(B)\nu_d(P),\ldots,
 \Sym^d(B)^d\nu_d(P)
\]
form a matrix whose row \(j\) is a nonzero multiple of
\[
 1,t^j,t^{2j},\ldots,t^{dj}
 \qquad(0\le j\le d).
\]
Its determinant is a nonzero Vandermonde product because
\(1,t,\ldots,t^d\) are distinct.  Linear independence after extension
to \(\F_{q^2}\) is equivalent to linear independence over \(\F_q\).
\end{proof}

\subsection{The unipotent family}

\begin{lemma}[Exact MDS criterion for pure powers]
\label{lem:pure-power-mds}
Let \(q\) be a prime power, let \(p=\operatorname{char}\F_q\), let
\(r\ge2\), let \(\beta\in\F_q^\times\), and put
\[
 g(X)=(X-\beta)^r.
\]
Assume that \(n\ge r+2\).  Then
\[
 \mathcal C_g\text{ is MDS}
 \quad\Longleftrightarrow\quad
 n\le p.
\]
Whenever these conditions hold, \(\mathcal C_g\) is of GRS type.
\end{lemma}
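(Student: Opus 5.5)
The plan is to pass to the root-jet model of Theorem~\ref{thm:root-jet-mds}. Here \(g=(X-\beta)^r\) has the single root \(\beta\) of multiplicity \(r\), so \(H_g^{\mathrm{jet}}\) has entries \(\binom{i}{j}\beta^{i-j}\) for \(0\le j<r\) and \(0\le i<n\). Multiply column \(i\) by \(\beta^{-i}\) and row \(j\) by \(\beta^{j}\); this uses \(\beta\ne0\) and changes no minor's vanishing. The question becomes whether every \(r\times r\) minor of the binomial matrix \(\bigl(\binom{i}{j}\bmod p\bigr)_{j<r,\,i<n}\) is nonzero.

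The key identity, over \(\mathbb Z\), is
\[
 \det\Bigl(\tbinom{i_b}{j}\Bigr)_{0\le j<r,\,1\le b\le r}
 =\frac{\prod_{a<b}(i_b-i_a)}{\prod_{j<r}j!}.
\]
It holds because \(\binom{x}{j}\) is \(x^j/j!\) plus lower-order terms, so the matrix is a Vandermonde matrix after a unitriangular row change.

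\emph{Sufficiency.} Suppose \(n\le p\). Since \(n\ge r+2\), we have \(r<p\), so every \(j!\) with \(j<r\) is a unit mod \(p\). Each difference \(i_b-i_a\) lies in \((0,n)\subseteq(0,p)\), so it is also a unit. Hence every minor is nonzero and the code is MDS.

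\emph{Necessity.} Suppose \(n>p\). The first case is \(r\le p\). Then \((X-\beta)^p=X^p-\beta^p\) is divisible by \(g\), has weight \(2\le r\), and has degree \(p<n\). Theorem~\ref{thm:determinantal-mds}(iii) therefore fails, and the code is not MDS.

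The second case is \(r>p\), and I expect it to be the main obstacle. Here I will exhibit a vanishing minor directly. Take the column set \(\{0,\ldots,r-2,s\}\). Its minor equals \(\binom{s}{r-1}\) by the formula above, so it suffices to find \(s\in[r-1,n-1]\) with \(p\mid\binom{s}{r-1}\). The first thing I would try is Lucas's theorem: pick \(s\le r+1\) so that some base-\(p\) digit of \(r-1\) exceeds the corresponding digit of \(s\). If the digits of \(r-1\) block this, I would switch to column sets \(\{0,\ldots,r\}\setminus\{m\}\), whose minor is \(\binom{r}{m}\). Lucas's theorem gives some \(m\) with \(p\mid\binom rm\) unless \(r\) has the special shape \(ap^k-1\). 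In that residual case I would use \(\{0,\ldots,r+1\}\setminus\{m_1,m_2\}\), which is available since \(n\ge r+2\), and compute that minor by the same quotient formula.

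For the GRS assertion, assume \(n\le p\), so \(d=r-1<p\). Let \(J\) be the \(2\times2\) unipotent Jordan block. Since \(d<p\), the operator \(\Sym^d(J)\) is a single unipotent Jordan block of size \(r\). Therefore \(\beta\,\Sym^d(J)\) is cyclic with minimal polynomial \((X-\beta)^r=g\), and it is similar to \(T_g\). Take \(P\in\PP^1(\F_q)\) other than the eigenpoint of \(J\). A direct check, similar to Lemma~\ref{lem:cyclic-rnc-point}, should show that \(\nu_d(P)\) is a cyclic vector, again using \(d<p\).

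By Theorem~\ref{thm:intrinsic-krylov}, \(\mathcal C_g\) equals the Krylov code of this pair. Condition (ii) of Theorem~\ref{thm:rnc-rigidity} holds with \(S\) the identity, \(B=J\) and \(c=\beta\). Hence the orbit lies on a split RNC, and Proposition~\ref{prop:projective-grs-criterion} gives GRS type. The hypothesis \(n\ge r+3\) of Theorem~\ref{thm:rnc-rigidity} is not actually needed for this converse direction. Under only \(n\ge r+2\), I would instead check directly that \(\Sym^d(J)\) preserves \(\Gamma_0\).
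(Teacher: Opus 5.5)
Sufficiency is correct and matches the paper, and so are the necessity cases \(r\le p\) and \(r>p\) with some \(\binom{r}{m}\equiv0\). Your weight-two multiple \((X-\beta)^p=X^p-\beta^p\) is the paper's coincidence of Pascal columns \(0\) and \(p\), and it also absorbs \(r=p\).

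\textbf{The gap.} The remaining case is the one you flag as the main obstacle: \(r>p\) with every interior \(\binom{r}{m}\) a unit mod \(p\), i.e.\ \(r+1=cp^e\) with \(1\le c<p\), \(e\ge1\). There you only say you would use \(\{0,\ldots,r+1\}\setminus\{m_1,m_2\}\) and compute that minor. You never choose \(m_1,m_2\) and never show that anything vanishes. So necessity is unproved in exactly the subcase where your first two devices can fail.

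\textbf{The missing observation.} In this case \(p\mid r+1\), so by Frobenius \((X-\beta)^{r+1}=(X^{p^e}-\beta^{p^e})^c\). This is a multiple of \(g\) of degree \(r+1<n\) with \(c+1\le p<r\) nonzero coefficients, and it is how the paper closes the case. In your own minor language, your first attempt already suffices. The column set \(\{0,\ldots,r-2,r+1\}\) has minor \(\binom{r+1}{r-1}=\binom{r+1}{2}\), which vanishes mod \(p\):
\begin{itemize}
\item for odd \(p\), because \(p\mid r+1\);
\item for \(p=2\), because \(r+1=2^e\) with \(e\ge2\) (as \(r\ge3\)), so \(\binom{r+1}{2}=2^{e-1}(2^e-1)\) is even.
\end{itemize}
Your third tier with \(m_1=1\), \(m_2=2\) gives the same value \(\binom{r+1}{2}\), but this has to be computed, not promised.

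\textbf{The GRS part.} Your argument is valid but takes a different route from the paper. You realize \(g\) by \(\beta\,\Sym^d(J)\), which is a single Jordan block because its superdiagonal entries \(d,d-1,\ldots,1\) are units for \(d<p\). You then take the cyclic vector \(\nu_d([0:1])\), whose orbit is \(\beta^i\nu_d([i:1])\), and conclude by Theorem~\ref{thm:intrinsic-krylov} and Proposition~\ref{prop:projective-grs-criterion}. You are right that Theorem~\ref{thm:rnc-rigidity} cannot be quoted here, since \(r=2\) or \(n=r+2\) is allowed, and that equivariance of \(\nu_d\) is all that is needed.

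The paper instead notes that the rows of the Pascal matrix are evaluations of the basis \(\binom{Y}{0},\ldots,\binom{Y}{r-1}\) of \(\F_q[Y]_{<r}\) at the distinct points \(0,1,\ldots,n-1\). The scaled parity-check matrix therefore generates a GRS code, and duality finishes. Both arguments rest on the same fact: after a triangular row change, the Pascal columns are Veronese images of \(0,\ldots,n-1\in\F_p\). The paper's version avoids the Jordan-form and cyclic-vector checks. Yours exhibits the unipotent type that reappears in Theorem~\ref{thm:arithmetic-classification}.
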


\begin{proof}
The root-jet matrix from Theorem~\ref{thm:root-jet-mds} is
\[
 H=\left(\binom{i}{j}\beta^{\,i-j}\right)_{
       0\le j<r,\ 0\le i<n}.
\]
Nonzero row and column scalings transform \(H\) into the Pascal matrix
\[
 P=\left(\binom{i}{j}\right)_{
       0\le j<r,\ 0\le i<n}.
\]
If \(n\le p\), then, for
\(0\le i_1<\cdots<i_r<n\),
\[
 \det\left(\binom{i_b}{j}\right)_{
       \substack{0\le j<r\\1\le b\le r}}
 =
 \frac{\prod_{1\le a<b\le r}(i_b-i_a)}
      {\prod_{j=0}^{r-1}j!}
 \ne0.
\]
Thus the code is MDS.  The polynomials
\(\binom{Y}{0},\ldots,\binom{Y}{r-1}\) form a basis of
\(\F_q[Y]_{<r}\), so the row space of \(P\) is a GRS code evaluated
at the distinct points \(0,1,\ldots,n-1\).  Duality and the preceding
scalings show that \(\mathcal C_g\) is of GRS type.

Suppose now that \(n>p\).  If \(r<p\), the columns of \(P\) indexed by
\(0\) and \(p\) coincide, so the code is not MDS.  Let \(r\ge p\).
If some interior binomial coefficient \(\binom rj\) vanishes modulo
\(p\), then \(g\), viewed as a length-\(n\) codeword, has coefficient
weight at most \(r\), contrary to the MDS criterion.

It remains to consider the case in which every
\(\binom rj\), \(1\le j<r\), is nonzero modulo \(p\).  Lucas' theorem
then forces
\[
 r+1=c p^s
\]
for some \(s\ge1\) and \(1\le c<p\): equivalently, in the base-\(p\)
expansion of \(r\), every digit below the leading nonzero digit is
\(p-1\).  Indeed, any smaller lower digit would allow a number \(j<r\)
whose corresponding digit is larger, and Lucas' product formula would make
\(\binom rj\) vanish.  In characteristic \(p\),
\[
 (X-\beta)^{r+1}
   =(X^{p^s}-\beta^{p^s})^c.
\]
This is a nonzero multiple of \(g\), has degree \(r+1<n\), and has
exactly \(c+1\le p\le r\) nonzero coefficients.  The sparse-multiple
criterion in Theorem~\ref{thm:krylov-mds} again shows that the code is
not MDS.
\end{proof}

\subsection{Complete GRS classification}

\begin{theorem}[Arithmetic classification of the GRS locus]
\label{thm:arithmetic-classification}
Let \(g\in\F_q[X]\) be monic of degree \(r\ge3\), and let
\(n\ge r+3\).  Then \(\mathcal C_g\) is simultaneously MDS and of GRS
type if and only if exactly one of the following mutually exclusive
alternatives holds.
\begin{enumerate}[leftmargin=3.8em]
\item[\textup{(U)}] \textbf{Unipotent case.}
There is a \(\beta\in\F_q^\times\) such that
\[
 g(X)=(X-\beta)^r
 \qquad\text{and}\qquad
 n\le p.
\]

\item[\textup{(S)}] \textbf{Split semisimple case.}
There are \(\gamma,t\in\F_q^\times\) such that
\[
 \ord(t)\ge n,
 \qquad
 g(X)=\prod_{i=0}^{r-1}(X-\gamma t^i).
\]

\item[\textup{(N0)}] \textbf{Nonsplit semisimple case with no linear
factor.}
The degree is even, say \(r=2v\), and there are
\(\gamma,t\in\F_{q^2}^\times\) such that
\[
 t^{q+1}=1,\qquad
 \ord(t)\ge n,\qquad
 \gamma^q=\gamma t^{r-1},
\]
and
\[
 \begin{aligned}
 g(X)
  &=\prod_{i=0}^{r-1}(X-\gamma t^i)\\
  &=\prod_{i=0}^{v-1}
    (X-\gamma t^i)(X-\gamma t^{r-1-i}).
 \end{aligned}
\]
The second line is a product of \(v\) pairwise distinct irreducible
quadratics over \(\F_q\).

\item[\textup{(N1)}] \textbf{Nonsplit semisimple case with one linear
factor.}
The degree is odd, say \(r=2v+1\), and there are
\(\beta\in\F_q^\times\) and \(t\in\F_{q^2}^\times\) such that
\[
 t^{q+1}=1,\qquad
 \ord(t)\ge n,
\]
and
\[
 \begin{aligned}
 g(X)
  &=(X-\beta)\prod_{j=1}^{v}
      (X-\beta t^j)(X-\beta t^{-j})\\
  &=(X-\beta)\prod_{j=1}^{v}
      \bigl(X^2-\beta(t^j+t^{-j})X+\beta^2\bigr).
 \end{aligned}
\]
The displayed quadratic factors are irreducible over \(\F_q\) and
pairwise distinct.
\end{enumerate}
\end{theorem}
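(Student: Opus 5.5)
We prove both directions through Theorem~\ref{thm:rnc-rigidity} and Corollary~\ref{cor:absolute-rnc}.

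\emph{Necessity.} Assume \(\mathcal C_g\) is MDS and of GRS type. By Proposition~\ref{prop:projective-grs-criterion}, the orbit arc of \((T_g,\bar1)\) lies on a split RNC. Theorem~\ref{thm:rnc-rigidity} then gives \(S\), \(B\in\operatorname{GL}_2(\F_q)\) and \(c\) with \(ST_gS^{-1}=c\,\Sym^d(B)\) and \([S\bar1]=\nu_d(P)\) for some \(P\in\PP^1(\F_q)\). Because \(T_g\) is cyclic, \(g\) equals the characteristic polynomial of \(c\,\Sym^d(B)\). We split according to the conjugacy type of \(B\) in \(\operatorname{PGL}_2(\F_q)\).
\begin{itemize}
\item \emph{Scalar \(B\).} Then \(T_g\) is scalar, which is impossible for a cyclic operator with \(r\ge 2\).
\item \emph{Non-semisimple \(B\).} Then \(B\) has a single eigenvalue \(\lambda\in\F_q\) and is a nontrivial Jordan block. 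The eigenvalues of \(\Sym^d(B)\) are all \(\lambda^d\), so \(g=(X-\beta)^r\) with \(\beta=c\lambda^d\ne0\). Lemma~\ref{lem:pure-power-mds} then forces \(n\le p\), which is case (U).
\item \emph{Split semisimple \(B\).} Write the eigenvalues as \(\lambda,\mu\in\F_q\), distinct. The eigenvalues of \(\Sym^d(B)\) are \(\lambda^{d-i}\mu^i\) (Lemma~\ref{lem:symmetric-power-spectrum}), so \(g=\prod_{i=0}^{r-1}(X-\gamma t^i)\) with \(\gamma=c\lambda^d\) and \(t=\mu/\lambda\in\F_q^\times\).
\item \emph{Nonsplit semisimple \(B\).} The eigenvalues satisfy \(\mu=\lambda^q\), so \(t=\lambda^{q-1}\) satisfies \(t^{q+1}=1\). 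Moreover \(\gamma^q=c\lambda^{qd}=\gamma t^{d}\), which gives the Frobenius relation \(\gamma^q=\gamma t^{r-1}\).
\end{itemize}
In the semisimple cases we still need \(\ord(t)\ge n\). If \(\ord(t)=e<n\), then \(\gamma t^i\) is a root of \(X^e-\gamma^e\), a binomial multiple of each linear factor. The cleaner route is to note that \(T_g\) is diagonalizable with eigenvalues \(\gamma t^i\), so \(T_g^e\) is scalar. Hence \(X^e-\gamma^e\equiv0\) modulo \(g\) when \(e\ge r\), which gives a weight-2 multiple of degree \(e<n\), contradicting Theorem~\ref{thm:krylov-mds}. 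If \(e\le d\), the roots \(\gamma t^i\) repeat; this contradicts cyclicity for squarefree semisimple \(T_g\), or directly contradicts distinctness of the eigenvalues of a cyclic diagonalizable operator.

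\emph{Sorting the nonsplit case into (N0)/(N1).} Frobenius acts on the roots by \(\gamma t^i\mapsto\gamma^q t^{-i}=\gamma t^{r-1-i}\), so it reverses the progression. A root is fixed exactly when \(2i=r-1\), which happens only for odd \(r\). For even \(r\) we therefore get \(v\) conjugate pairs, giving distinct irreducible quadratics because the roots are distinct; this is case (N0). For odd \(r\) the middle root \(\beta=\gamma t^{v}\) is \(\F_q\)-rational, and reindexing gives case (N1). The quadratics are irreducible because \(t^j\ne t^{-j}\): indeed \(\ord(t)\ge n>2v\). They are pairwise distinct because their roots are distinct.

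\emph{Sufficiency.} Each family has a progression of roots with \(\ord(t)\ge n\), so Lemma~\ref{lem:semisimple-orbit-mds} gives MDS; case (U) is handled by Lemma~\ref{lem:pure-power-mds}. To get GRS type we realize \(g\) as the characteristic polynomial of some \(c\,\Sym^d(B)\) with \(B\) defined over \(\F_q\).
\begin{itemize}
\item In case (S), take \(B=\mathrm{diag}(1,t)\) and \(c=\gamma\).
\item In cases (N0)/(N1), take \(B\) to be multiplication by \(\lambda\) on \(\F_{q^2}\cong\F_q^2\), where \(\lambda^{q-1}=t\). Such \(\lambda\) exists because \(t^{q+1}=1\). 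We then need a scalar \(c\in\F_q^\times\) with \(c\lambda^d=\gamma\). Since \((\gamma\lambda^{-d})^{q-1}=t^{d}t^{-d}=1\), the element \(\gamma\lambda^{-d}\) lies in \(\F_q\). In (N1) we set \(\gamma=\beta t^{-v}\).
\end{itemize}
Lemma~\ref{lem:cyclic-rnc-point} supplies a cyclic vector \(w=\nu_d(P)\), with \(P\) not an eigenpoint; such \(P\) exists since the nonsplit case has no rational eigenpoints, and \(q+1>2\) in the split case. Since \(c\,\Sym^d(B)\) is cyclic with characteristic polynomial \(g\), Theorem~\ref{thm:intrinsic-krylov} gives \(\mathcal C_g=\mathcal K_n(c\Sym^d(B),w)\). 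The converse direction of Theorem~\ref{thm:rnc-rigidity} together with Proposition~\ref{prop:projective-grs-criterion} then gives GRS type.

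\emph{Mutual exclusivity.} Case (U) is the only non-squarefree case. Case (S) splits completely over \(\F_q\), while (N0)/(N1) contain irreducible quadratics, since \(v\ge1\) when \(r\ge3\). Cases (N0) and (N1) are distinguished by parity.

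\emph{Main obstacle.} The hardest step is the clean derivation of \(\ord(t)\ge n\) in the nonsplit case, in particular when \(e=\ord(t)\) satisfies \(d<e<r\) (only \(e=r\) fits between these). The diagonalizability argument above should handle this: \(T_g^e\) is scalar, and a weight-2 multiple of degree \(e<n\) is then available whenever \(e\ge r\), while \(e\le d\) is excluded by distinct eigenvalues.
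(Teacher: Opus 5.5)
Your proposal is correct and follows essentially the same route as the paper. You reduce via Theorem~\ref{thm:rnc-rigidity} to the conjugacy type of \([B]\), read off the progression from the symmetric-power spectrum, and sort the nonsplit case by Frobenius reversal and parity; for sufficiency you use the same \(c\,\Sym^d(B)\) realization (multiplication by \(\lambda\) with \(\lambda^{q-1}=t\) in the nonsplit case), combined with Lemma~\ref{lem:cyclic-rnc-point} and Theorem~\ref{thm:intrinsic-krylov}. One simplification: your weight-two multiple \(X^e-\gamma^e\) is exactly the paper's observation that \([T_g^e\bar1]=[\bar1]\), and that argument works uniformly for every \(e<n\). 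So your case split is unnecessary, and so is the worry about \(d<e<r\), which is an empty range since \(d=r-1\).
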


\begin{proof}
Let \(T_g\) be multiplication by \(X\) on \(\F_q[X]/(g)\).  It is a
cyclic operator whose minimal and characteristic polynomials are both
\(g\).

\smallskip
\noindent\emph{Necessity: the three projective types.}
Assume that \(\mathcal C_g\) is MDS and of GRS type.  Applying
Theorem~\ref{thm:rnc-rigidity} to the cyclic pair
\((T_g,\bar1)\) gives
\[
 ST_gS^{-1}=c\,\Sym^d(B)
\]
for some \(S\in\operatorname{GL}_r(\F_q)\),
\(c\in\F_q^\times\), and \(B\in\operatorname{GL}_2(\F_q)\).
We distinguish the conjugacy type of \([B]\).

Suppose first that \(B\) is semisimple, with distinct eigenvalues
\(\lambda,\mu\in\F_{q^2}^\times\).  In the nonsplit case this
diagonalization is over \(\F_{q^2}\), and no diagonalization over
\(\F_q\) is being asserted.  Put
\[
 \gamma=c\lambda^d,\qquad t=\frac{\mu}{\lambda}.
\]
Lemma~\ref{lem:symmetric-power-spectrum} shows that the roots of \(g\)
are
\[
 \gamma,\gamma t,\ldots,\gamma t^d.
\]
They are distinct because a diagonalizable cyclic operator has simple
spectrum.

We claim that \(\ord(t)\ge n\).  If
\(m=\ord(t)<n\), then every eigenvalue of \(T_g^m\) equals
\(\gamma^m\).  Since \(T_g\) is diagonalizable over \(\F_{q^2}\),
\[
 T_g^m=\gamma^m I_r.
\]
Consequently \([T_g^m\bar1]=[\bar1]\), so two projective columns of
the MDS orbit arc coincide, a contradiction.

If \(B\) is split semisimple, then
\(\lambda,\mu\in\F_q^\times\), and hence
\(\gamma,t\in\F_q^\times\).  This is family \textup{(S)}.

Now suppose that \(B\) is nonsplit semisimple.  After interchanging
its eigenvalues, take \(\mu=\lambda^q\).  Then
\[
 t^q=t^{-1},\qquad t^{q+1}=1,\qquad
 \gamma^q=\gamma t^d.
\]
Frobenius acts on the progression by reversal:
\[
 (\gamma t^i)^q=\gamma t^{d-i}.
\]
Because \(\ord(t)\ge n>d\), the root \(\gamma t^i\) is fixed by
Frobenius exactly when \(2i=d\) as an equality of integers.
If \(d\) is odd, equivalently \(r=2v\) is even, there is no fixed root;
the pairs \(i\leftrightarrow d-i\), \(0\le i<v\), give the
irreducible quadratic factorization in \textup{(N0)}.
If \(d=2v\) is even, equivalently \(r=2v+1\) is odd, there is exactly
one fixed root.  Put
\[
 \beta=\gamma t^v\in\F_q^\times.
\]
The root set becomes
\[
 \{\beta t^{-v},\ldots,\beta t^{-1},\beta,
   \beta t,\ldots,\beta t^v\},
\]
which gives the factorization in \textup{(N1)}.
The order inequality shows that no noncentral paired root is
\(\F_q\)-rational and that two different pairs cannot coincide.
Thus all the displayed quadratics are irreducible and pairwise
distinct.

Finally suppose that \(B\) has only one eigenvalue.  The scalar case
would make \(T_g\) scalar, contradicting cyclicity for \(r\ge3\).
Thus \([B]\) is unipotent.  By
Lemma~\ref{lem:symmetric-power-spectrum}, \(T_g\) has a single
geometric eigenvalue \(\beta\), and therefore
\[
 g(X)=(X-\beta)^r.
\]
The unique root is Frobenius-fixed and \(T_g\) is invertible, so
\(\beta\in\F_q^\times\).  Lemma~\ref{lem:pure-power-mds} gives
\(n\le p\), proving \textup{(U)}.

\smallskip
\noindent\emph{Sufficiency.}
Family \textup{(U)} is MDS and GRS by
Lemma~\ref{lem:pure-power-mds}.  In each semisimple family the roots
form a progression with ratio of order at least \(n\), so
Lemma~\ref{lem:semisimple-orbit-mds} proves the MDS property.

For \textup{(S)}, take
\[
 B=\begin{pmatrix}1&0\\0&t\end{pmatrix},
 \qquad A=\gamma\,\Sym^d(B).
\]
Then \(A\) is defined over \(\F_q\), preserves \(\Gamma_0\), and has
characteristic polynomial \(g\).

For either nonsplit family, first write its root set as
\[
 \{\gamma,\gamma t,\ldots,\gamma t^d\},
 \qquad
 \gamma^q=\gamma t^d;
\]
in family \textup{(N1)} take \(\gamma=\beta t^{-v}\).
The map \(x\mapsto x^{q-1}\) from \(\F_{q^2}^\times\) onto its
norm-one subgroup is surjective, so choose
\(\lambda\in\F_{q^2}^\times\) with
\[
 \lambda^{q-1}=t.
\]
Let \(B\in\operatorname{GL}_2(\F_q)\) represent multiplication by
\(\lambda\) on the two-dimensional \(\F_q\)-space \(\F_{q^2}\).
Its eigenvalues over \(\F_{q^2}\) are
\(\lambda\) and \(\lambda^q=\lambda t\).  With
\[
 c=\frac{\gamma}{\lambda^d},
\]
the descent equation gives \(c^q=c\), so \(c\in\F_q^\times\), and
\[
 A=c\,\Sym^d(B)
\]
has characteristic polynomial \(g\) and preserves \(\Gamma_0\).
This construction uses diagonalization only over \(\F_{q^2}\).

In every semisimple case, the order condition makes the eigenvalues
of \(A\) distinct.  Thus \(A\) is cyclic and is similar over
\(\F_q\) to \(T_g\), since both have characteristic polynomial \(g\).
Choose \(R\in\operatorname{GL}_r(\F_q)\) with
\[
 RT_gR^{-1}=A,
 \qquad w=R\bar1.
\]
Then
\[
 H_n(A,w)=R H_n(T_g,\bar1),
 \qquad
 \mathcal K_n(T_g,\bar1)=\mathcal K_n(A,w).
\]
Choose \(P\in\PP^1(\F_q)\) away from the two eigenpoints in the split
case; in the nonsplit case every \(\F_q\)-rational point has this
property.  Lemma~\ref{lem:cyclic-rnc-point} makes
\(z=\nu_d(P)\) cyclic for \(A\).  The vector \(w\) is cyclic as well,
and Theorem~\ref{thm:intrinsic-krylov} therefore gives
\[
 \mathcal K_n(A,w)=\mathcal K_n(A,z).
\]
Consequently \(\mathcal C_g=\mathcal K_n(A,z)\).  Its projective
parity-check orbit lies on
\(\Gamma_0\), so it is of GRS type.

The alternatives are mutually exclusive.  Family \textup{(U)} is the
only non-squarefree family; among the semisimple families,
\textup{(S)} has all roots in \(\F_q\), \textup{(N0)} has none, and
\textup{(N1)} has exactly one.
\end{proof}

\subsection{The MDS/GRS/non-GRS trichotomy}

For fixed \(q,n,r\) in the stable range, let
\(\mathcal P_{n,r}(q)\) be the set of monic degree-\(r\) polynomials
over \(\F_q\).  Let
\(\mathcal U,\mathcal S,\mathcal N_0,\mathcal N_1\) denote the four
sets in Theorem~\ref{thm:arithmetic-classification}, and put
\[
 \begin{aligned}
 \mathcal M_{n,r}(q)
  &=\{g\in\mathcal P_{n,r}(q):D_{n,r}(g)\ne0\},\\
 \mathcal G_{n,r}(q)
  &=\mathcal U\mathbin{\dot\cup}\mathcal S
    \mathbin{\dot\cup}\mathcal N_0\mathbin{\dot\cup}\mathcal N_1,\\
 \mathcal N_{n,r}(q)
  &=\mathcal M_{n,r}(q)\setminus\mathcal G_{n,r}(q).
 \end{aligned}
\]

\begin{theorem}[Complete MDS/GRS trichotomy]
\label{thm:complete-trichotomy}
Let \(g\in\mathcal P_{n,r}(q)\).  Exactly one of the following
alternatives holds.
\begin{enumerate}[leftmargin=3.2em]
\item[\textup{(A)}] \textbf{Non-MDS:}
      \(D_{n,r}(g)=0\).
\item[\textup{(B)}] \textbf{MDS-GRS:}
      \(g\in\mathcal G_{n,r}(q)\), equivalently \(g\) belongs to
      exactly one of
      \(\mathcal U,\mathcal S,\mathcal N_0,\mathcal N_1\).
\item[\textup{(C)}] \textbf{MDS-non-GRS:}
      \(D_{n,r}(g)\ne0\) and
      \(g\notin\mathcal G_{n,r}(q)\).
\end{enumerate}
Thus
\[
 \mathcal P_{n,r}(q)
 =
 \bigl(\mathcal P_{n,r}(q)\setminus\mathcal M_{n,r}(q)\bigr)
 \mathbin{\dot\cup}\mathcal G_{n,r}(q)
 \mathbin{\dot\cup}\mathcal N_{n,r}(q).
\]
\end{theorem}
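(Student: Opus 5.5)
The trichotomy is essentially a bookkeeping consequence of Theorem~\ref{thm:determinantal-mds} and Theorem~\ref{thm:arithmetic-classification}, so the plan is to prove exhaustiveness and mutual exclusivity of (A), (B), (C) and then read off the disjoint decomposition. First, by Theorem~\ref{thm:determinantal-mds}, \(g\in\mathcal M_{n,r}(q)\) exactly when \(D_{n,r}(g)\ne0\), exactly when \(\mathcal C_g\) is MDS. Hence \(\mathcal P_{n,r}(q)\) is the disjoint union of its complement \(\mathcal P_{n,r}(q)\setminus\mathcal M_{n,r}(q)\), which is alternative (A), and \(\mathcal M_{n,r}(q)\).

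The key inclusion is \(\mathcal G_{n,r}(q)\subseteq\mathcal M_{n,r}(q)\). By Theorem~\ref{thm:arithmetic-classification}, membership of \(g\) in any of \(\mathcal U,\mathcal S,\mathcal N_0,\mathcal N_1\) is equivalent to \(\mathcal C_g\) being simultaneously MDS and of GRS type. In particular, every element of \(\mathcal G_{n,r}(q)\) is MDS, so \(D_{n,r}(g)\ne0\) by the previous paragraph. The same theorem states that the four families are mutually exclusive (by the number of \(\F_q\)-rational roots, and squarefreeness versus non-squarefreeness). This justifies both the disjoint union defining \(\mathcal G_{n,r}(q)\) and the phrase ``exactly one of'' in (B).

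It then follows that \(\mathcal M_{n,r}(q)=\mathcal G_{n,r}(q)\mathbin{\dot\cup}\mathcal N_{n,r}(q)\) by the definition \(\mathcal N_{n,r}(q)=\mathcal M_{n,r}(q)\setminus\mathcal G_{n,r}(q)\), and this is alternative (C). Combining this with the first splitting gives the displayed three-part partition. Each \(g\) lies in exactly one part, so exactly one of (A), (B), (C) holds. Pairwise exclusivity is immediate: (A) and (B) are incompatible because \(\mathcal G\subseteq\mathcal M\); (A) and (C) are incompatible because of the condition \(D_{n,r}\ne0\); and (B) and (C) are incompatible by definition.

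There is no serious obstacle. The only point needing care is that the equivalence ``MDS and GRS \(\iff\) \(g\in\mathcal G_{n,r}(q)\)'' relies on the standing hypotheses \(r\ge3\), \(n\ge r+3\) of Theorem~\ref{thm:arithmetic-classification}. One should also note that (B) is characterized by family membership rather than by \(D_{n,r}\) directly, so the inclusion \(\mathcal G\subseteq\mathcal M\) must be invoked explicitly rather than assumed.
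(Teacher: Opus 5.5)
Your proposal is correct and follows essentially the same argument as the paper. Theorem~\ref{thm:determinantal-mds} identifies the MDS locus with \(D_{n,r}\ne0\), the four families lie inside it, and Theorem~\ref{thm:arithmetic-classification} identifies their disjoint union with the MDS--GRS sublocus. The only cosmetic difference is in the inclusion \(\mathcal G_{n,r}(q)\subseteq\mathcal M_{n,r}(q)\): the paper cites Lemmas~\ref{lem:pure-power-mds} and~\ref{lem:semisimple-orbit-mds} directly, whereas you invoke the sufficiency direction of the classification theorem, which is itself proved from those two lemmas.
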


\begin{proof}
Theorem~\ref{thm:determinantal-mds} identifies the MDS locus with
\(D_{n,r}\ne0\).  Lemmas~\ref{lem:pure-power-mds} and
\ref{lem:semisimple-orbit-mds} put all four arithmetic families inside
that locus, and Theorem~\ref{thm:arithmetic-classification} identifies
their union with the complete GRS sublocus.  Its complement inside the
MDS locus is precisely the MDS non-GRS locus.
\end{proof}

\begin{corollary}[Factors of degree at least three]
\label{cor:high-degree-factor-nongrs}
Let \(g\in\F_q[X]\) be monic of degree \(r\ge3\), let \(n\ge r+3\),
and assume that \(\mathcal C_g\) is MDS.  If \(g\) has an irreducible
factor of degree at least \(3\), then \(\mathcal C_g\) is not of GRS
type.
\end{corollary}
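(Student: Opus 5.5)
This corollary is essentially a restatement of part (a) of Corollary~\ref{cor:companion-absolute-nonrnc}, and the plan is to derive it directly from the rigidity machinery, with Theorem~\ref{thm:arithmetic-classification} available as a cross-check. I argue by contradiction: suppose \(\mathcal C_g\) is MDS and of GRS type.

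By Proposition~\ref{prop:projective-grs-criterion}, the projective columns of \(H_n(T_g,\bar1)\) lie on a split RNC over \(\F_q\). The cyclic pair \((T_g,\bar1)\) satisfies the hypotheses \(r\ge3\), \(n\ge r+3\) and MDS. Theorem~\ref{thm:rnc-rigidity} therefore gives
\[
 ST_gS^{-1}=c\,\Sym^{r-1}(B),\qquad S\in\operatorname{GL}_r(\F_q),\ B\in\operatorname{GL}_2(\F_q),\ c\in\F_q^\times .
\]

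The key step is spectral. The eigenvalues of \(B\) are roots of its characteristic polynomial, which is a quadratic over \(\F_q\), so they lie in \(\F_{q^2}\). By Lemma~\ref{lem:symmetric-power-spectrum}, the eigenvalues of \(c\,\Sym^{r-1}(B)\) are the products \(c\lambda^{r-1-i}\mu^{i}\), and these also lie in \(\F_{q^2}\). The characteristic polynomial of \(T_g\) is \(g\) (Proposition~\ref{prop:companion-realization}), so every root of \(g\) lies in \(\F_{q^2}\). An irreducible factor of degree \(e\ge3\) has its roots generating \(\F_{q^e}\), which is not contained in \(\F_{q^2}\). This is the contradiction.

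Alternatively, one can simply observe that each family (U), (S), (N0), (N1) in Theorem~\ref{thm:arithmetic-classification} factors into linear and quadratic irreducible factors over \(\F_q\). The only point needing care is to invoke the spectral lemma for the symmetric power in the correct form, which I take from the appendix as stated. I do not expect any real obstacle.
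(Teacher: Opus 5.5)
Your argument is correct, but your main route differs from the paper's. The paper's proof is exactly your closing ``alternative'': every member of the families (U), (S), (N0), (N1) of Theorem~\ref{thm:arithmetic-classification} factors over \(\F_q\) into linear and quadratic factors, so by unique factorization no irreducible factor of degree \(\ge3\) can occur.

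Your primary argument instead goes directly from Proposition~\ref{prop:projective-grs-criterion} and Theorem~\ref{thm:rnc-rigidity} to Lemma~\ref{lem:symmetric-power-spectrum}. This places every root of the characteristic polynomial \(g\) of \(T_g\) in \(\F_{q^2}\), and the conclusion follows since \(e\nmid 2\) for \(e\ge3\). As you note, this is precisely the proof of Corollary~\ref{cor:companion-absolute-nonrnc}(a).

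What your route buys is independence from the full classification. You need neither the case analysis of projective conjugacy types, nor the order bound \(\ord(t)\ge n\), nor Lemma~\ref{lem:pure-power-mds}, nor any of the sufficiency direction. The only input is that a symmetric-power projectivity has its spectrum in \(\F_{q^2}\). Combined with Corollary~\ref{cor:absolute-rnc}, the same argument even excludes containment in any RNC over \(\overline{\F}_q\).

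The paper's route is a one-line read-off once Theorem~\ref{thm:arithmetic-classification} is available, and it presents the corollary as a visible feature of the explicit GRS families. Logically, however, the necessity half of that theorem rests on the same rigidity and spectral inputs that you invoke directly.
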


\begin{proof}
Every polynomial in the four GRS families factors over \(\F_q\) into
linear and quadratic factors.  Apply
Theorem~\ref{thm:arithmetic-classification}.
\end{proof}

\begin{corollary}[Genuine repeated-root polynomials]
\label{cor:repeated-nonpure-nongrs}
Let \(g\in\F_q[X]\) be monic of degree \(r\ge3\), let \(n\ge r+3\),
and assume that \(\mathcal C_g\) is MDS.  If \(g\) is not squarefree
and is not a pure power of a nonzero linear polynomial, then
\(\mathcal C_g\) is not of GRS type.
\end{corollary}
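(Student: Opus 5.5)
The plan is to read this off from the arithmetic classification, Theorem~\ref{thm:arithmetic-classification}, by checking squarefreeness in each family. Assume, for contradiction, that \(\mathcal C_g\) is MDS and of GRS type. The classification then places \(g\) in exactly one of \(\mathcal U,\mathcal S,\mathcal N_0,\mathcal N_1\).

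The semisimple families are excluded first. In (S), (N0) and (N1), the roots of \(g\) over \(\F_{q^2}\) form a progression
\[
 \gamma,\gamma t,\ldots,\gamma t^{r-1}
\]
with \(\ord(t)\ge n>r-1\). Hence the exponents \(t^0,\ldots,t^{r-1}\) are pairwise distinct, so the roots are distinct and \(g\) is squarefree. In (N1) the roots are \(\beta t^{j}\) for \(-v\le j\le v\), and two of them coincide only if \(t^{i-j}=1\) with \(0<|i-j|\le 2v=r-1<n\), which the order bound forbids. Since \(g\) is assumed not squarefree, none of these families applies.

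What remains is family (U), where \(g=(X-\beta)^r\) with \(\beta\in\F_q^\times\). This is a pure power of a nonzero linear polynomial, which the hypothesis excludes. This contradiction shows \(\mathcal C_g\) is not of GRS type.

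As an alternative route, one could apply Corollary~\ref{cor:companion-absolute-nonrnc}(b) directly. A non-squarefree \(g\) that is not a pure power \((X-\beta)^r\) has at least two distinct geometric roots; the only other possibility, \(g=X^r\), violates MDS because \(a_0=0\). That corollary then rules out GRS type.

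There is no real obstacle here. The only point needing care is confirming that every semisimple family is squarefree via \(\ord(t)\ge n>r-1\), and that the case \(\beta=0\) is absorbed by the MDS hypothesis.
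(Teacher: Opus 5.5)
Your main argument is correct and is essentially the paper's own proof. The semisimple families (S), (N0), (N1) are squarefree because $\ord(t)\ge n>r-1$, so a non-squarefree MDS polynomial of GRS type must lie in (U), i.e.\ equal $(X-\beta)^r$ with $\beta\in\F_q^\times$, which the hypothesis excludes. Your alternative route through Corollary~\ref{cor:companion-absolute-nonrnc}(b) is also valid, implicitly using two facts: a unique geometric root of $g\in\F_q[X]$ is Frobenius-fixed and hence lies in $\F_q$, and $X^r$ is excluded by $a_0=0$.
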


\begin{proof}
The semisimple families in
Theorem~\ref{thm:arithmetic-classification} are squarefree, and its
only non-squarefree family is \textup{(U)}.
\end{proof}
 \section{The coefficient surface of the GRS locus}
\label{sec:coefficient-surface}

Throughout this section, \(r\geq 3\) and \(d=r-1\).  We pass from the
orbit-theoretic classification to the coefficient space of monic
degree-\(r\) polynomials.  The semisimple part of the GRS locus is governed
by root sets in geometric progression.  Its closure is only
two-dimensional, independently of \(r\), and its natural parameterization
has precisely the reversal ambiguity that also controls arithmetic descent.

\subsection{The geometric-progression parameterization}

\begin{definition}[The coefficient surface]
For every field \(k\), let
\[
\Phi_{r,k}:\mathbb G_{m,k}^{2}\longrightarrow\A_k^r
\]
be the coefficient morphism defined by
\[
\prod_{i=0}^{d}(X-\gamma t^i)
=X^r+a_{r-1}X^{r-1}+\cdots+a_0,
\qquad
\Phi_{r,k}(\gamma,t)=(a_0,\ldots,a_{r-1}).
\]
Put
\[
\GRSsurf_{r,k}
:=\overline{\operatorname{im}\Phi_{r,k}}
\subseteq\A_k^r,
\]
where this is the Zariski closure of the underlying topological image,
endowed with its reduced induced subscheme structure,
and call it the \emph{coefficient surface}.  When the ground field is clear,
we omit the subscript \(k\).  On the parameter torus, define the reversal
involution
\[
\iota(\gamma,t)=(\gamma t^d,t^{-1}).
\]
\end{definition}

The involution reverses the order of a geometric progression without
changing its underlying root set.  The next result shows that, generically,
this is the only ambiguity.

\subsection{Reversal and the generic degree-two quotient}

\begin{theorem}[Geometry of the coefficient surface]
\label{thm:coefficient-surface}
Let \(k\) be an algebraically closed field and let \(r\geq3\).  Then the
following statements hold.
\begin{enumerate}[label=\textup{(\arabic*)}]
\item \(\Phi_{r,k}\circ\iota=\Phi_{r,k}\).
\item \(\GRSsurf_{r,k}\) is an irreducible rational surface.
\item The generic fiber of \(\Phi_{r,k}\) is
      \[
      \{(\gamma,t),(\gamma t^d,t^{-1})\}.
      \]
      In particular, \(\Phi_{r,k}\) is generically finite of degree two.
\item
      \[
      \dim\GRSsurf_{r,k}=2,
      \qquad
      \operatorname{codim}_{\A_k^r}\GRSsurf_{r,k}=r-2.
      \]
\item The morphism \(\Phi_{r,k}\) factors through \(D(a_0)\), and the
      induced morphism
      \[
      \Phi_{r,k}:\mathbb G_{m,k}^2\longrightarrow D(a_0)
      \]
      is finite.  If \(\operatorname{Im}_{\mathrm{sch}}(\Phi_{r,k})\)
      denotes its scheme-theoretic image, then, as reduced closed
      subschemes of \(D(a_0)\),
      \[
      \GRSsurf_{r,k}\cap D(a_0)
      =\operatorname{Im}_{\mathrm{sch}}(\Phi_{r,k}).
      \]
      In particular, when \(k\) is algebraically closed,
      \[
      \bigl(\GRSsurf_{r,k}\cap D(a_0)\bigr)(k)
      =\Phi_{r,k}\bigl(\mathbb G_{m,k}^2(k)\bigr).
      \]
\end{enumerate}
\end{theorem}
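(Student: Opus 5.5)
Statement (1) is immediate: the reversal \(\iota\) sends the progression \(\gamma,\gamma t,\ldots,\gamma t^d\) to \(\gamma t^d,\gamma t^{d-1},\ldots,\gamma\). These are the same roots in reverse order, so the product \(\prod_i(X-\gamma t^i)\) and hence its coefficient vector are unchanged.

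For (5), first note that \(a_0=(-1)^r\gamma^r t^{d(d+1)/2}\) is a unit on the torus, so \(\Phi_{r,k}\) lands in \(D(a_0)\). To prove finiteness I would show that \(k[\gamma^{\pm1},t^{\pm1}]\) is integral over \(k[a_0^{\pm1},a_1,\ldots,a_{r-1}]\). Each root \(\gamma t^i\) satisfies the monic polynomial \(g\), so \(\gamma\) and \(\gamma t\) are integral over the coefficient ring. The inverses are handled by the reversed polynomial \(X^r g(1/X)/a_0\), which is monic with coefficients in \(k[a_0^{\pm1},a_1,\ldots]\) and has roots \((\gamma t^i)^{-1}\); this makes \(\gamma^{-1}\) and \((\gamma t)^{-1}\) integral. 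Then \(t=(\gamma t)\gamma^{-1}\) and \(t^{-1}=\gamma(\gamma t)^{-1}\) are integral as products of integral elements. So the ring map is finite, and \(\Phi\) is a finite morphism, hence closed. The topological image is therefore closed in \(D(a_0)\). Because the reduced closure of a closed image equals the reduced scheme-theoretic image (the torus is reduced), \(\GRSsurf_{r,k}\cap D(a_0)=\operatorname{Im}_{\mathrm{sch}}\). On \(k\)-points this gives \((\GRSsurf\cap D(a_0))(k)=\Phi(\mathbb G_m^2(k))\), using that closed points of the image come from closed points under a finite map with \(k=\bar k\).

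For (2) and (4): \(\GRSsurf\) is the closure of the image of an irreducible variety, so it is irreducible. Finiteness gives \(\dim\GRSsurf=\dim\mathbb G_m^2=2\), so the codimension is \(r-2\). Unirationality is immediate. To get rationality I would identify the function field \(k(\GRSsurf)\) with the \(\iota\)-invariant subfield \(k(\gamma,t)^{\iota}\); this follows from (3) once the generic degree is 2, or in characteristic \(p\) once separability is checked. That invariant field is rational with explicit generators. Setting \(\beta=\gamma t^{d/2}\) would symmetrize the action, but it needs a square root, so instead I would use \(u=t+t^{-1}\) together with an invariant such as \(\gamma^2t^d\) (the product of the first and last roots). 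Together with \(\gamma(1+t^d)\) (their sum), this gives invariants generating a degree-2 subextension. I would verify \([k(\gamma,t):k(u,\gamma^2t^d,\ldots)]=2\) by hand. Alternatively, a quotient of a rational surface is unirational, and Castelnuovo's theorem applies in all characteristics for a surface dominated by a separable map, which gives rationality directly.

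For (3), the main obstacle, I would argue on roots. Take a generic point with \(t\) of infinite order (or large order) and a generic \(\gamma\). Suppose another pair \((\gamma',t')\) gives the same root set \(R=\{\gamma t^i\}\). Ratios of elements of \(R\) are \(t^{j}\) with \(|j|\le d\), and the ratio with largest multiplicity is \(t^{\pm1}\), occurring \(d\) times. Since \(r\ge3\), the ratios \(t^{\pm2}\) occur only \(d-1\) times, and \(t^{\pm1}\) differs from \(t^{\pm j}\) for \(j\ge2\) when \(t\) is generic. So \(t'\in\{t,t^{-1}\}\). The start \(\gamma'\) is then forced to be the unique element \(x\) of \(R\) with \(xt'^{-1}\notin R\), which gives exactly the pair \(\{(\gamma,t),\iota(\gamma,t)\}\). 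The pair consists of two distinct points when \(t^2\ne1\), so the degree is 2, provided the extension is separable. For separability in characteristic 2 I would note that \(\iota\) acts freely on the generic fiber, so the torus-to-quotient map is Galois of degree 2 and hence separable. The quotient \(\mathbb G_m^2/\langle\iota\rangle\to\GRSsurf\cap D(a_0)\) is then birational, since it is bijective on generic points and purely inseparable degree is excluded by an explicit check that the invariant generators lie in \(k(a_0,\ldots,a_{r-1})\).
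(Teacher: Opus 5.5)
\textbf{Gap: separability of $K=k(\gamma,t)$ over $L=k(\GRSsurf_{r,k})$ in positive characteristic.} Much of the proposal is sound. This includes part (1). It also includes your integrality proof of finiteness in (5), using the roots of $g$ and of $X^rg(1/X)/a_0$; this is a valid, more elementary substitute for the paper's factorization through the closed immersion $\psi$ and the finite $S_r$-quotient. Irreducibility and dimension are fine, and so is the ratio-multiplicity argument showing that a general closed fiber (with $\operatorname{ord}(t)$ large) is exactly $\{(\gamma,t),\iota(\gamma,t)\}$.

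The missing step is separability of $K/L$. Counting two points in a general fiber only gives $[K:L]_s=2$. Since $L\subseteq K^{\langle\iota\rangle}$ and $[K:K^{\langle\iota\rangle}]=2$, this leaves open that $K^{\langle\iota\rangle}/L$ is purely inseparable of nontrivial $p$-power degree, in any characteristic $p>0$, not just $2$.

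Your remark that $\iota$ acts freely, so the quotient map is ``Galois of degree 2 and hence separable'', concerns $K/K^{\langle\iota\rangle}$. That extension is separable by Artin's theorem anyway, and the remark says nothing about $K^{\langle\iota\rangle}/L$.

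Your fallback, an explicit check that $t+t^{-1}$ and $\gamma(1+t^d)$ lie in $k(a_0,\ldots,a_{r-1})$, is not carried out, and it is not routine for general $r$. The obvious invariant ratios have degree about $r$ in $t$, for instance
$a_{r-2}/a_{r-1}^2=\frac{t(1+t+\cdots+t^{r-2})}{(1+t)(1+t+\cdots+t^{r-1})}$.
Several claims rest on this step: the degree-two claim in (3), the identification $L=K^{\langle\iota\rangle}$ you use for rationality in (2), and your Castelnuovo--Zariski alternative, which needs a separable parametrization.

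\textbf{A short repair.} With $t$ transcendental, the roots $\gamma t^i$ are distinct. Hence $g=X^r+a_{r-1}X^{r-1}+\cdots+a_0\in L[X]$ is a separable polynomial. Now $K=L(\gamma,\gamma t)$ is generated over $L$ by two of its roots, so $K/L$ is separable. The paper reaches the same conclusion differently: $\Phi$ is $\psi$ followed by the $S_r$-quotient, which is \'etale on the distinct-root locus, so $d\Phi$ has rank two at the generic point.

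With separability in hand, the rest goes through:
\begin{enumerate}[label=\textup{(\roman*)}]
\item Your fiber count gives $[K:L]=2$, and Artin's theorem forces $L=K^{\langle\iota\rangle}$.
\item Rationality then follows from $u=t+t^{-1}$ and $v=\gamma(1+t^d)$ alone. Here $t$ is a root of $T^2-uT+1$ and $\gamma=v/(1+t^d)$, so $[K:k(u,v)]\le2$, hence $k(u,v)=K^{\langle\iota\rangle}$. The third invariant $\gamma^2t^d$ in your generating set is unnecessary.
\end{enumerate}
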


\begin{proof}
Reversing the roots gives
\[
\{\gamma,\gamma t,\ldots,\gamma t^d\}
=
\{\gamma t^d,\gamma t^{d-1},\ldots,\gamma\},
\]
so \(\Phi_{r,k}\circ\iota=\Phi_{r,k}\).

Consider the monomial homomorphism
\[
\psi_k:\mathbb G_{m,k}^{2}\longrightarrow(\mathbb G_{m,k})^r,
\qquad
(\gamma,t)\longmapsto
(\gamma,\gamma t,\ldots,\gamma t^d).
\]
On coordinate rings, \(\psi_k\) is induced by
\[
\psi_k^\#:
k[x_0^{\pm1},\ldots,x_d^{\pm1}]
\longrightarrow k[\gamma^{\pm1},t^{\pm1}],
\qquad
x_i\longmapsto\gamma t^i.
\]
This homomorphism is surjective, since
\[
\gamma=\psi_k^\#(x_0),
\qquad
t=\psi_k^\#(x_1x_0^{-1}).
\]
Hence \(\psi_k\) is a closed immersion.

The torus \(\Gm^2\) is irreducible, and hence so is the closure of its
image.  Put
\[
K=k(\gamma,t),
\qquad
L=k(\GRSsurf_{r,k}),
\]
where pullback along \(\Phi_{r,k}\) identifies \(L\) with a subfield of
\(K\).  The factorization through the finite symmetric-group quotient
described below shows that \(K/L\) is finite.  To compute its generic
fiber, regard \(\gamma\) and \(t\) as algebraically independent over
\(k\).  Suppose that
\[
\{\gamma t^i:0\leq i\leq d\}
=
\{\delta s^i:0\leq i\leq d\}.
\]
The element \(s\) is a quotient of two roots on the left, so
\(s=t^e\) for some \(e\in\mathbb Z\); similarly,
\(\delta=\gamma t^a\) for some \(a\in\mathbb Z\).  Since \(t\) is
transcendental, equality of the root sets is equivalent to
\[
\{a,a+e,\ldots,a+de\}=\{0,1,\ldots,d\}
\]
inside \(\mathbb Z\).  The roots are distinct, so \(e\neq0\).  If
\(e>0\), comparison of the minimum and maximum
gives \(a=0\) and \(e=1\).  If \(e<0\), it gives \(a=d\) and \(e=-1\).
Thus the generic fiber consists precisely of
\((\gamma,t)\) and \((\gamma t^d,t^{-1})\).  At the generic point the
\(r\) roots are distinct.  Let
\(U_{\mathrm{dist}}\subset(\mathbb G_{m,k})^r\) be the distinct-root open
subset.  The constant finite \'etale group scheme \(\underline{S_r}_k\) acts
freely on \(U_{\mathrm{dist}}\).  Hence
\[
U_{\mathrm{dist}}
\longrightarrow U_{\mathrm{dist}}/\underline{S_r}_k
\]
is an \(\underline{S_r}_k\)-torsor and therefore finite \'etale in every
characteristic.
Since \(\psi_k\) is a closed immersion, the differential of the composite
has rank two at the generic point.  Thus the induced finite extension
\(K/L\) is separable.  The two geometric generic points just found therefore
give
\[
[K:L]=2.
\]
In particular, \(\dim\GRSsurf_{r,k}=2\), and the stated codimension follows.

The identity \(\Phi_{r,k}\circ\iota=\Phi_{r,k}\) gives
\(L\subseteq K^{\langle\iota\rangle}\).  The automorphism \(\iota\) is
nontrivial because it sends the transcendental element \(t\) to
\(t^{-1}\).  Artin's fixed-field theorem, which applies in every
characteristic (including characteristic two), gives
\[
[K:K^{\langle\iota\rangle}]=2.
\]
The tower law now yields \(L=K^{\langle\iota\rangle}\).

It remains to prove rationality.  In \(K\), put
\[
u=t+t^{-1},
\qquad
v=\gamma(1+t^d).
\]
Both functions are invariant under \(\iota\).  Conversely, \(t\) satisfies
\[
T^2-uT+1=0,
\]
and
\[
\gamma=\frac{v}{1+t^d}.
\]
The denominator is nonzero because \(t\) is transcendental, also in
characteristic two.  Hence \([K:k(u,v)]\leq2\).  Since
\(k(u,v)\subseteq K^{\langle\iota\rangle}\) and the fixed field has
index two, equality holds:
\[
K^{\langle\iota\rangle}=k(u,v).
\]
The finite extension \(K/k(u,v)\) preserves transcendence degree, so
\(u\) and \(v\) are algebraically independent.  Consequently,
\[
k(\GRSsurf_{r,k})=L=K^{\langle\iota\rangle}=k(u,v),
\]
which proves that \(\GRSsurf_{r,k}\) is rational.

For the final assertion, use the closed immersion \(\psi_k\) above.  The
quotient morphism
\[
(\mathbb G_{m,k})^r
\longrightarrow\operatorname{Sym}^r(\mathbb G_{m,k})
\]
is finite, since it is the quotient by the finite symmetric group.  Under
the coefficient identification
\[
\operatorname{Sym}^r(\mathbb G_{m,k})
\simeq D(a_0)
=\operatorname{Spec}k[a_0^{\pm1},a_1,\ldots,a_{r-1}],
\]
the composite with \(\psi_k\) is exactly the factorization of
\(\Phi_{r,k}\) through \(D(a_0)\).  Since a closed immersion is finite, this
composite is finite.  Put
\(Z=\operatorname{Im}_{\mathrm{sch}}(\Phi_{r,k})\).  On coordinate rings,
\[
\mathcal O(Z)
=k[a_0^{\pm1},a_1,\ldots,a_{r-1}]/\ker(\Phi_{r,k}^{\#})
\hookrightarrow k[\gamma^{\pm1},t^{\pm1}].
\]
Thus \(Z\) is reduced, indeed integral.  Since a finite morphism is closed,
\(\lvert Z\rvert\) is the closed set-theoretic image of \(\Phi_{r,k}\) in
\(D(a_0)\).  By the definition of \(\GRSsurf_{r,k}\), the reduced closed
subschemes \(Z\) and \(\GRSsurf_{r,k}\cap D(a_0)\) have the same underlying
closed subset and hence are equal.  If \(k\) is algebraically closed, the
finite surjection \(\mathbb G_{m,k}^2\to Z\) is surjective on closed
\(k\)-points, which proves the final equality.
\end{proof}

\subsection{Normalization and the fixed locus}
\label{subsec:normalization-fixed-locus}

\begin{theorem}[Normalization over the nonzero-constant-term locus]
\label{thm:normalization-open}
Let \(k\) be an algebraically closed field, let \(r\geq3\), and put
\(d=r-1\).  Set
\[
T=\mathbb G_{m,k}^{2},
\qquad
A=k[\gamma^{\pm1},t^{\pm1}],
\qquad
Y_r=\GRSsurf_{r,k}\cap D(a_0).
\]
Let \(\iota^*:A\to A\) be the involution determined by
\[
\iota^*(\gamma)=\gamma t^d,
\qquad
\iota^*(t)=t^{-1},
\]
and define the affine categorical quotient by
\[
T/\!/\langle\iota\rangle
:=\operatorname{Spec}C,
\qquad
C=A^{\langle\iota^*\rangle}.
\]
Let \(q:T\to T/\!/\langle\iota\rangle\) be the canonical quotient
morphism. Then \(q\) is finite, and the coefficient morphism
\(\Phi_{r,k}:T\to Y_r\) factors uniquely as
\[
T
\xrightarrow{\;q\;}
T/\!/\langle\iota\rangle
\xrightarrow{\;\overline{\Phi}_{r,k}\;}
Y_r,
\]
and \(\overline{\Phi}_{r,k}\) is the normalization morphism of \(Y_r\).
Equivalently,
\[
Y_r^\nu\simeq T/\!/\langle\iota\rangle.
\]
This assertion holds in every characteristic, including characteristic
two.
\end{theorem}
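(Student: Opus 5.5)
The plan is to realize \(T/\!/\langle\iota\rangle\) as a finite birational normal cover of \(Y_r\). The first step is finiteness of \(q\). The ring \(A\) is integral over \(C=A^{\langle\iota^*\rangle}\), since each \(a\in A\) is a root of
\[
X^2-(a+\iota^*a)X+a\,\iota^*a\in C[X].
\]
Moreover \(A\) is a finitely generated \(k\)-algebra. Noether's theorem on invariants of finite groups then shows that \(C\) is finitely generated and that \(A\) is a finite \(C\)-module, in every characteristic. Hence \(q\) is finite and surjective.

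Next comes the factorization. By Theorem~\ref{thm:coefficient-surface}(1), \(\Phi^\#_{r,k}(a_i)\) is \(\iota^*\)-invariant. So the ring map
\[
\Phi^\#_{r,k}:\mathcal O(Y_r)=k[a_0^{\pm1},a_1,\ldots,a_{r-1}]/\ker\Phi^\#_{r,k}\hookrightarrow A
\]
lands in \(C\). This gives \(\overline\Phi_{r,k}\), and uniqueness holds because \(q\) is dominant, so \(q^\#\) is injective. Here we use Theorem~\ref{thm:coefficient-surface}(5), which identifies \(Y_r\) with the integral scheme-theoretic image.

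We then check that \(\overline\Phi_{r,k}\) has the three normalization properties.
\begin{enumerate}[label=\textup{(\alph*)}]
\item \emph{Finite.} The map \(\Phi_{r,k}:T\to Y_r\) is finite by Theorem~\ref{thm:coefficient-surface}(5), so \(A\) is a finite \(\mathcal O(Y_r)\)-module. Since \(\mathcal O(Y_r)\) is Noetherian, the submodule \(C\subseteq A\) is also finite over \(\mathcal O(Y_r)\).
\item \emph{Birational.} The fraction field of \(C\) equals \(K^{\langle\iota\rangle}\). Indeed, any \(\iota\)-invariant fraction \(f/g\) can be rewritten as \(f\iota^*(g)/(g\iota^*(g))\); the denominator is then invariant, hence so is the numerator. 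The proof of Theorem~\ref{thm:coefficient-surface} showed that \(K^{\langle\iota\rangle}=L=k(Y_r)\). So \(\overline\Phi^\#_{r,k}\) induces an isomorphism of function fields.
\item \emph{Normal.} \(C\) is integrally closed. If \(x\in\operatorname{Frac}(C)=K^{\langle\iota\rangle}\) is integral over \(C\), then it is integral over \(A\). Since \(A\) is a Laurent polynomial ring, it is a UFD and hence normal, so \(x\in A\cap K^{\langle\iota\rangle}=C\).
\end{enumerate}

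A finite birational morphism from a normal integral scheme to an integral scheme is the normalization: \(C\) is integral over \(\mathcal O(Y_r)\) and integrally closed in the common fraction field, so \(C\) is the integral closure of \(\mathcal O(Y_r)\). This proves \(Y_r^\nu\simeq T/\!/\langle\iota\rangle\).

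None of these steps uses that \(|\langle\iota\rangle|=2\) is invertible. In characteristic two, where \(\iota\) may be wild, we avoid Reynolds operators entirely; the invariant-theoretic finiteness and the fraction-field argument above are characteristic-free.

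The main point needing care is birationality, specifically that \(L=K^{\langle\iota\rangle}\) rather than merely \(L\subseteq K^{\langle\iota\rangle}\). This relies on the degree computation \([K:L]=2\) from Theorem~\ref{thm:coefficient-surface}, which required separability. The plan is to cite that result directly, noting that its separability argument via the \(S_r\)-torsor over the distinct-root locus is valid in all characteristics, and therefore so is this theorem.
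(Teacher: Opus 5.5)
Your proposal is correct and follows essentially the same route as the paper: integrality of $A$ over $C$ via $(Z-f)(Z-\iota^*f)$, finiteness of $C$ over $\mathcal O(Y_r)$ as a submodule of the finite module $A$, normality of $C$ inherited from the normal domain $A$, and birationality from $k(Y_r)=K^{\langle\iota\rangle}$ as established in Theorem~\ref{thm:coefficient-surface}. The only differences are cosmetic. You cite Noether's finiteness theorem, whereas the paper argues directly that $A=C[\gamma^{\pm1},t^{\pm1}]$ is integral and of finite type over $C$. You also compute $\operatorname{Frac}(C)=K^{\langle\iota\rangle}$ directly, whereas the paper uses the sandwich $\operatorname{Frac}(B)\subseteq\operatorname{Frac}(C)\subseteq K^{\langle\iota\rangle}=\operatorname{Frac}(B)$.
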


\begin{proof}
For every \(f\in A\),
\[
\bigl(Z-f\bigr)\bigl(Z-\iota^*(f)\bigr)
=Z^2-\bigl(f+\iota^*(f)\bigr)Z+f\iota^*(f)
\in C[Z],
\]
because both coefficients are fixed by \(\iota^*\). Hence every element of
\(A\) is integral over \(C\). Moreover, since \(k\subseteq C\),
\[
A=C[\gamma,\gamma^{-1},t,t^{-1}],
\]
so \(A\) is a finitely generated \(C\)-algebra. Since \(A\) is both integral
and of finite type over \(C\), it is finite as a \(C\)-module. Therefore the
quotient morphism \(q\) is finite. This argument is valid in every
characteristic.

Let \(B=\mathcal O(Y_r)\).  By
Theorem~\ref{thm:coefficient-surface}, the morphism
\(\Phi_{r,k}:T\to Y_r\) is finite and has scheme-theoretic image \(Y_r\).
Consequently, \(B\) is a finitely generated \(k\)-domain,
\(\Phi_{r,k}^{\#}\) identifies \(B\) with a subring of \(A\), and \(A\)
is finite as a \(B\)-module.

The morphism \(\Phi_{r,k}\) is invariant under \(\iota\).  Hence
\[
B\subseteq C=A^{\langle\iota^*\rangle}\subseteq A,
\]
which gives the asserted factorization.  Since \(B\) is Noetherian and
\(C\) is a \(B\)-submodule of the finite \(B\)-module \(A\), the ring
\(C\) is finite over \(B\).  Thus \(\overline{\Phi}_{r,k}\) is finite.

We next verify that \(C\) is normal without restricting the characteristic.
The Laurent polynomial ring \(A\) is a normal domain.  Let
\(x\in\operatorname{Frac}(C)\) be integral over \(C\).  The same monic
equation, now viewed as an equation with coefficients in \(A\), shows that
\(x\) is integral over \(A\).  Since \(x\in\operatorname{Frac}(A)\) and
\(A\) is normal, we obtain \(x\in A\).  Moreover, every element of
\(\operatorname{Frac}(C)\) is fixed by \(\iota^*\), so
\(x\in A^{\langle\iota^*\rangle}=C\).  Therefore \(C\) is normal.  Notice
that this argument uses neither averaging nor the invertibility of two.

Let \(K=\operatorname{Frac}(A)\).  Since \(Y_r\) is a dense open subscheme
of \(\GRSsurf_{r,k}\), Theorem~\ref{thm:coefficient-surface} gives
\[
\operatorname{Frac}(B)
=k(Y_r)
=k(\GRSsurf_{r,k})
=K^{\langle\iota^*\rangle}.
\]
The inclusions \(B\subseteq C\subseteq A\) therefore imply
\[
\operatorname{Frac}(B)
\subseteq\operatorname{Frac}(C)
\subseteq K^{\langle\iota^*\rangle}
=\operatorname{Frac}(B).
\]
Thus \(\operatorname{Frac}(C)=\operatorname{Frac}(B)\), and
\(\overline{\Phi}_{r,k}\) is finite and birational.  Since \(C\) is
integral over \(B\), it is contained in the integral closure of \(B\) in
their common fraction field. Conversely, if \(x\) in this common fraction
field is integral over \(B\), then the same monic polynomial shows that
\(x\) is integral over \(C\). Normality of \(C\) therefore gives \(x\in C\).
Hence \(C\) is exactly the integral closure of \(B\), and
\(\overline{\Phi}_{r,k}\) is the normalization morphism.
\end{proof}

Thus the generic reversal ambiguity is promoted from a function-field
statement to a global normalization statement on the
nonzero-constant-term locus.

\begin{corollary}[Fixed locus of reversal and the free quotient locus]
\label{cor:reversal-fixed-locus}
In the notation of Theorem~\ref{thm:normalization-open}, assume that
\(\operatorname{char}k\neq2\), and let
\(q:T\to T/\!/\langle\iota\rangle\) be the quotient morphism.  The fixed
locus of \(\iota\) is
\[
F=
\begin{cases}
V(t-1),&r\text{ even},\\
V(t-1)\,\dot\cup\,V(t+1),&r\text{ odd}.
\end{cases}
\]
Since \(q\) is finite, its image \(q(F)\) is closed. Put
\(U=T\setminus F\) and
\(V=(T/\!/\langle\iota\rangle)\setminus q(F)\).  Then
\(U=q^{-1}(V)\), and
\[
q|_U:U\longrightarrow V
\]
is finite \(\acute{e}\)tale of degree two.
\end{corollary}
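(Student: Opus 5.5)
We first compute the fixed locus of \(\iota\) on closed points, and then verify that the group generated by \(\iota\) acts freely on \(U\), which is what makes the quotient map étale there.

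\emph{Fixed points.} A closed point \((\gamma,t)\in T(k)\) is fixed exactly when
\[
\gamma t^d=\gamma,\qquad t^{-1}=t .
\]
Since \(\gamma\ne0\), these conditions become \(t^2=1\) and \(t^d=1\). As \(\operatorname{char}k\ne2\), the equation \(t^2=1\) has the two distinct roots \(t=\pm1\). The root \(t=1\) always satisfies \(t^d=1\). The root \(t=-1\) satisfies it iff \(d\) is even, that is, iff \(r\) is odd.

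\emph{Scheme structure of \(F\).} The fixed-point scheme is cut out by the ideal \((\gamma t^d-\gamma,\ t^{-1}-t)\). Because \(\gamma\) is a unit, this ideal equals \((t^d-1,\ t^2-1)\). Since \(t^2-1=(t-1)(t+1)\) with coprime factors, it reduces to
\[
(t-1)\ \text{ if } r \text{ even},\qquad (t^2-1)=(t-1)(t+1)\ \text{ if } r \text{ odd}.
\]
Hence \(F\) is the reduced scheme displayed in the statement, a disjoint union of the two lines when \(r\) is odd.

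\emph{Closedness and saturation.} The set \(q(F)\) is closed because \(q\) is finite (Theorem~\ref{thm:normalization-open}). To show \(q^{-1}(V)=U\), we need \(F\) to be saturated: \(q^{-1}(q(F))=F\). Each fiber of \(q\) over a closed point is a single \(\langle\iota\rangle\)-orbit; this is the standard fact that geometric points of the quotient of an affine scheme by a finite group correspond to orbits. Since \(F\) is \(\iota\)-stable, it is saturated, and \(q^{-1}(V)=T\setminus F=U\).

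\emph{Étaleness of \(q|_U\).} For the finite étale claim, we use the standard result on quotients by finite groups acting freely. Let \(G=\langle\iota\rangle\), of order \(2\). If \(G\) acts freely on an affine scheme \(U\), meaning all inertia groups at all points are trivial, then \(U\to U/G\) is a \(G\)-torsor, hence finite étale of degree \(|G|=2\).

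Two points need checking.

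\emph{First, freeness at all points, not just closed points.} A point \(x\in U\) with \(\iota(x)=x\) has a nontrivial inertia group. The standard criterion for freeness asks that the inertia group act nontrivially on the residue field \(\kappa(x)\) for every \(x\) with \(\iota(x)=x\). This is equivalent to \(U\) meeting no fixed-point scheme, i.e. \(U^{\iota}=\varnothing\) as a scheme. We verified this above, since \(U^\iota=F\cap U=\varnothing\).

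\emph{Second, compatibility of invariants with localization.} We need \(V=U/G\), i.e. \(\mathcal O(V)\) is the ring of \(\iota\)-invariants of \(\mathcal O(U)\). Taking invariants commutes with localization at invariant elements, and \(V\) is covered by basic opens \(D(c)\) with \(c\in C\) vanishing on \(q(F)\). The required \(c\) can be built as norms \(h\,\iota^*(h)\) of elements \(h\) of the ideal of \(F\), which have no zeros on \(U\). Alternatively, we avoid this by citing SGA1 Exposé V, Proposition 2.6, directly for the open \(G\)-stable subset \(U=q^{-1}(V)\).

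\emph{Expected obstacle.} The main obstacle is this last bookkeeping: showing that \(q|_U\) agrees with the quotient \(U\to U/G\), and handling freeness scheme-theoretically. The computation of \(F\) is routine; the hypothesis \(\operatorname{char}k\ne2\) enters only to make \(t=\pm1\) distinct and \(F\) reduced.
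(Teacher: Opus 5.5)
Your proposal is correct and follows essentially the same route as the paper. Both arguments compute the fixed equalizer as $(t^d-1,\,t^2-1)$, use the orbit-fiber property of finite-group quotients to get $q^{-1}(q(F))=F$, and apply the free-action torsor criterion on the invariant principal open $U$; your explicit identification $V=\operatorname{Spec} C_c$, with $c=h\,\iota^*(h)$ the invariant norm of the generator $h$ of the ideal of $F$, fills in a localization step that the paper leaves implicit. One wording fix: a point with $\iota(x)=x$ has nontrivial decomposition group, not necessarily nontrivial inertia, but the criterion you actually use (empty fixed-point scheme on $U$) is the right one.
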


\begin{proof}
We use the standard facts that, for a finite group acting on an affine
scheme, the primes over a fixed prime of the invariant ring form a single
group orbit, and that on an invariant free affine open the quotient is a
torsor under the corresponding constant finite \(\acute{e}\)tale group
scheme; see
\cite[Tags~\texttt{0BRI} and~\texttt{07S7}]{StacksProject}.

A point is fixed precisely when
\(t=t^{-1}\) and \(\gamma=\gamma t^d\), or equivalently when
\[
t^2=1,
\qquad
t^d=1.
\]
Because \(1\) and \(-1\) are distinct, these equations give the stated
fixed locus according as \(d=r-1\) is odd or even.  The orbit-fiber fact
gives \(q^{-1}(q(F))=F\): an orbit meeting the fixed locus is the singleton
orbit of that fixed point. Hence \(U=q^{-1}(V)\). Moreover, \(U\) is the
invariant principal affine open
\[
U=
\begin{cases}
D(t-1),&r\text{ even},\\
D((t-1)(t+1)),&r\text{ odd}.
\end{cases}
\]
The fixed equalizer is empty on \(U\), so the constant group scheme
\(\underline{\mathbb Z/2\mathbb Z}_k\) acts scheme-theoretically freely
there. The restricted quotient is therefore a torsor under this finite
\(\acute{e}\)tale group scheme of rank two, and hence is finite
\(\acute{e}\)tale of degree two.
\end{proof}

\begin{remark}[The fixed subscheme in characteristic two]
Assume that \(\operatorname{char}k=2\), and write \(d=r-1\). The equalizer
of \(\iota\) and the identity is defined in
\(A=k[\gamma^{\pm1},t^{\pm1}]\) by
\[
I_{\operatorname{Fix}(\iota)}
=\bigl(\iota^*(t)-t,\iota^*(\gamma)-\gamma\bigr)
=\bigl(t^{-1}-t,\gamma(t^d-1)\bigr)
=\bigl((t-1)^2,t^d-1\bigr).
\]
If \(d\) is odd, write \(t^d-1=(t-1)h_d(t)\). Since
\(h_d(1)=d=1\) in \(k\), we have \((h_d,t-1)=A\), and hence
\(I_{\operatorname{Fix}(\iota)}=(t-1)\). If \(d\) is even, say
\(d=2m\), then
\[
t^d-1=(t^m-1)^2\in\bigl((t-1)^2\bigr),
\]
so \(I_{\operatorname{Fix}(\iota)}=((t-1)^2)\). Consequently, the fixed
subscheme is \(V(t-1)\) when \(r\) is even and \(V((t-1)^2)\) when \(r\)
is odd; in either case its underlying closed subset is \(V(t-1)\).

Let \(U_2=D(t-1)\) and
\[
V_2=(T/\!/\langle\iota\rangle)\setminus q(V(t-1)).
\]
The orbit-fiber property gives \(U_2=q^{-1}(V_2)\). On the invariant
principal affine open \(U_2\), the fixed subscheme is empty, so the constant
group scheme \(\underline{\mathbb Z/2\mathbb Z}_k\) acts
scheme-theoretically freely. The same quotient-torsor argument shows that
\(q|_{U_2}:U_2\to V_2\) is finite \(\acute{e}\)tale of degree two.

The normalization theorem concerns only
\(\GRSsurf_{r,k}\cap D(a_0)\).  It neither identifies the normalization
across the boundary \(a_0=0\) nor describes the full defining ideal,
conductor, or local ramification structure of the quotient and normalization
morphisms.
\end{remark}

\subsection{The squarefree GRS locus and the pure-power boundary}

Let \(k\) be a field.  All coefficient-space loci in this subsection are
formed in \(\A_k^r\).  In particular, \(\MDSopen_{n,r}\) and
\(D(\Disc)\) denote the base changes to \(k\) of the corresponding principal
open subsets defined over \(\mathbb Z\).

For a coefficient point
\(a=(a_0,\ldots,a_{r-1})\in\A_k^r\), write
\[
g_a(X)=X^r+a_{r-1}X^{r-1}+\cdots+a_0.
\]
Write
\[
\Disc=\Disc_r(a_0,\ldots,a_{r-1})
      :=(-1)^{r(r-1)/2}\operatorname{Res}_X(g_a,g_a')
\]
for the discriminant polynomial on coefficient space.  Thus
\(D(\Disc)\) is the squarefree open subset.
By Theorem~\ref{thm:determinantal-mds}, the companion code \(\mathcal C_{g_a}\)
is MDS precisely on the principal open subset
\[
\MDSopen_{n,r}:=D(D_{n,r})\subseteq\A_k^r.
\]
Let
\[
 u_{r,k}:\A^1_k\longrightarrow\A^r_k,
 \qquad
 \beta\longmapsto\coeff\bigl((X-\beta)^r\bigr),
\]
and let \(\Upower_{r,k}^{\mathrm{cl}}\subseteq\A^r_k\) be the Zariski
closure of its underlying topological image, endowed with the reduced induced
structure.  Define the nonzero-constant-term part by
\[
 \Upower_{r,k}^{\times}:=\Upower_{r,k}^{\mathrm{cl}}\cap D(a_0).
\]
When \(k\) is algebraically closed, these sets of \(k\)-points are
\[
\Upower_{r,k}^{\mathrm{cl}}(k)
=\left\{\coeff\bigl((X-\beta)^r\bigr):\beta\in k\right\},
\qquad
\Upower_{r,k}^{\times}(k)
=\left\{\coeff\bigl((X-\beta)^r\bigr):\beta\in k^\times\right\}.
\]
Thus \(\Upower_{r,k}^{\mathrm{cl}}\) contains the limiting point corresponding
to \(X^r\), whereas \(\Upower_{r,k}^{\times}\) is the part relevant to invertible
companion operators.
For algebraically closed \(k\), let
\(\operatorname{GRS}_{n,r}(k)\) denote the coefficient points for which
the companion Krylov code is MDS and its projective orbit is contained in a
rational normal curve.

\begin{theorem}[The geometric GRS locus inside coefficient space]
\label{thm:geometric-grs-locus}
Let \(k\) be an algebraically closed field, let \(r\geq3\), and let
\(n\geq r+3\).  Then
\[
\boxed{
\operatorname{GRS}_{n,r}(k)
=
\bigl(
\MDSopen_{n,r}\cap\GRSsurf_{r,k}\cap D(\Disc)
\bigr)(k)
\;\dot\cup\;
\bigl(
\MDSopen_{n,r}\cap\Upower_{r,k}^{\times}
\bigr)(k).
}
\]
Moreover,
\[
\bigl(\MDSopen_{n,r}\cap\Upower_{r,k}^{\times}\bigr)(k)
=
\begin{cases}
\Upower_{r,k}^{\times}(k),
   & \operatorname{char}k=0,\\
\Upower_{r,k}^{\times}(k),
   & \operatorname{char}k=p>0\text{ and }n\leq p,\\
\varnothing,
   & \operatorname{char}k=p>0\text{ and }n>p.
\end{cases}
\]
\end{theorem}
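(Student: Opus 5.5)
The plan is to prove the two inclusions for the boxed identity over the algebraically closed field \(k\). Disjointness comes for free: pure powers \((X-\beta)^r\) with \(r\ge3\) have vanishing discriminant. The argument adapts the necessity/sufficiency argument of Theorem~\ref{thm:arithmetic-classification} to \(k=\overline k\). Over an algebraically closed field every \(k\)-form is split and \(\operatorname{PGL}_2(k)\) has only two kinds of non-scalar elements, semisimple and unipotent. So Theorem~\ref{thm:rnc-rigidity} applies directly to the cyclic pair \((T_{g_a},\bar1)\).

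\emph{Inclusion \(\subseteq\).} Let \(a\in\operatorname{GRS}_{n,r}(k)\). The code is MDS, so \(a\in\MDSopen_{n,r}\) by Theorem~\ref{thm:determinantal-mds}, and \(a_0\ne0\) by Theorem~\ref{thm:krylov-mds}. Theorem~\ref{thm:rnc-rigidity} gives \(ST_{g_a}S^{-1}=c\,\Sym^d(B)\).

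If \(B\) has distinct eigenvalues \(\lambda,\mu\), then Lemma~\ref{lem:symmetric-power-spectrum} makes the roots \(\gamma t^i\) with \(\gamma=c\lambda^d\) and \(t=\mu/\lambda\). These roots are distinct because a diagonalizable cyclic operator has simple spectrum. Hence \(a=\Phi_{r,k}(\gamma,t)\in\GRSsurf_{r,k}\cap D(\Disc)\).

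If \(B\) has a single eigenvalue, it is not scalar, since \(T_{g_a}\) is cyclic and \(r\ge3\). Then \(g_a=(X-\beta)^r\) with \(\beta\ne0\), so \(a\in\Upower^\times_{r,k}\).

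\emph{Inclusion \(\supseteq\), squarefree part.} This is the step I expect to be the main obstacle. A point of \(\GRSsurf_{r,k}\cap D(\Disc)\) satisfies \(a_0\ne0\) by the MDS hypothesis, so it lies in \(\GRSsurf_{r,k}\cap D(a_0)\). By Theorem~\ref{thm:coefficient-surface}(5) it equals \(\Phi_{r,k}(\gamma,t)\) for some \((\gamma,t)\in(k^\times)^2\). The closure could have added boundary points, and this is exactly what part (5) rules out on \(D(a_0)\).

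Squarefreeness makes the \(\gamma t^i\) distinct, so \(t\) has order greater than \(d\), and in particular \(t\ne1\). We still need \(\ord(t)\ge n\), which does not follow from \(\Disc\ne0\) alone. I would argue by contradiction using the MDS hypothesis. If \(m=\ord(t)<n\), then \(T_{g_a}\) is diagonalizable with eigenvalues \(\gamma t^i\), so \(T_{g_a}^m=\gamma^mI\). Then \([\bar1]=[T^m\bar1]\), which contradicts the arc property.

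Now take \(B=\operatorname{diag}(1,t)\) and \(A=\gamma\Sym^d(B)\). This \(A\) has characteristic polynomial \(g_a\) with distinct eigenvalues, so it is cyclic and similar to \(T_{g_a}\). Pick \(P\in\PP^1(k)\) different from the two eigenpoints. Lemma~\ref{lem:cyclic-rnc-point}, with its proof run over \(k\), shows that \(\nu_d(P)\) is a cyclic vector. Theorem~\ref{thm:intrinsic-krylov} then identifies the codes, so the orbit lies on \(\Gamma_0\), as in the sufficiency part of Theorem~\ref{thm:arithmetic-classification}.

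\emph{Inclusion \(\supseteq\), pure-power part.} For \(a\in\MDSopen_{n,r}\cap\Upower^\times_{r,k}\) we have \(g_a=(X-\beta)^r\) with \(\beta\ne0\). Take \(B=\begin{pmatrix}1&1\\0&1\end{pmatrix}\) and \(A=\beta\Sym^d(B)\). The vector \(\nu_d([s:1])\) with \(s\) generic is cyclic: the orbit is \(\nu_d\) at \(s,s+1,\dots,s+d\), and these are distinct points when \(\operatorname{char}k=0\) or \(d<p\). In the remaining case \(p\le d\) we have \(n>r>p\), so the MDS hypothesis cannot hold and there is nothing to prove.

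Alternatively, one can bypass this construction: the root-jet/Pascal argument of Lemma~\ref{lem:pure-power-mds} works verbatim over any field and gives GRS type directly. The row space of the Pascal matrix is an evaluation code, so its columns lie on an RNC.

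\emph{Case formula.} The second displayed formula is Lemma~\ref{lem:pure-power-mds} transplanted to \(k\), whose proof uses no finiteness of the field. In characteristic \(0\), the Pascal minors \(\prod(i_b-i_a)/\prod j!\) never vanish, so the code is always MDS. In characteristic \(p\), the code is MDS exactly when \(n\le p\).
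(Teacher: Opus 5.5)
Your proposal is correct and follows essentially the same route as the paper. You use rigidity plus the spectral lemma for \(\subseteq\), Theorem~\ref{thm:coefficient-surface}(5) with an order argument and the diagonal \(\Sym^d\) model for the squarefree part of \(\supseteq\), and the Pascal/pure-power criterion for the unipotent part. The differences are cosmetic: you get \(\ord(t)\ge n\) from \(T_{g_a}^m=\gamma^m I\) rather than from proportional root-evaluation columns, you offer an explicit unipotent \(\Sym^d\) orbit for pure-power RNC containment, and you rerun Lemma~\ref{lem:pure-power-mds} over \(k\) instead of reducing the Pascal minors to \(\F_p\) with \(\beta=1\).
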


\begin{proof}
Let
\(a\in(\MDSopen_{n,r}\cap\GRSsurf_{r,k}\cap D(\Disc))(k)\).  The MDS
condition forces \(a_0\neq0\), since the companion operator must be
invertible.  Theorem~\ref{thm:coefficient-surface}\textup{(5)} therefore
supplies \(\gamma,t\in k^\times\) such that
\[
g_a(X)=\prod_{i=0}^{d}(X-\gamma t^i).
\]
The discriminant condition makes these roots distinct.  If \(t^m=1\) for
some \(1\leq m<n\), then in the root-evaluation parity-check matrix the
column indexed by \(j+m\) is \(\gamma^m\) times the column indexed by \(j\)
whenever both occur.  This contradicts the MDS property.  Thus
\(\ord(t)\geq n\), where infinite order is allowed.  Taking
\[
B=\begin{pmatrix}1&0\\0&t\end{pmatrix},
\qquad A=\gamma\,\operatorname{Sym}^d(B),
\]
the eigenvalues of \(A\) are exactly the roots of \(g_a\).  They are
distinct, so \(A\) is cyclic and is similar to \(T_{g_a}\).  A point of
\(\mathbb P^1\) with both eigen-coordinates nonzero is cyclic for \(A\) by
the same Vandermonde calculation as in
Lemma~\ref{lem:cyclic-rnc-point}.  Its orbit lies on the standard RNC, and
Theorem~\ref{thm:intrinsic-krylov} transfers that orbit description to the
companion pair.  Lemma~\ref{lem:semisimple-orbit-mds} verifies the same
order condition at the level of maximal minors.

Now suppose \(g_a=(X-\beta)^r\).  Its Hasse-jet parity-check matrix is,
up to nonzero row and column scalings, the Pascal matrix
\[
\left(\binom{i}{j}\right)_{
0\leq j<r,\ 0\leq i<n}.
\]
In characteristic zero, for column indices
\(0\leq i_1<\cdots<i_r<n\), its maximal minor is
\[
\det\!\left(\binom{i_b}{j}\right)_
 {0\leq j<r,\,1\leq b\leq r}
=
\frac{\prod_{1\leq a<b\leq r}(i_b-i_a)}
     {\prod_{j=0}^{r-1}j!},
\]
which is nonzero.  In characteristic \(p>0\), every maximal minor of this
Pascal matrix lies in the prime field \(\F_p\), and its vanishing is unchanged
after scalar extension from \(\F_p\) to \(k\).  Applying
Lemma~\ref{lem:pure-power-mds} over \(\F_p\), with \(\beta=1\), therefore
shows that all maximal minors are nonzero precisely when \(n\leq p\).  In the
MDS cases the Pascal columns, after the indicated row change, are distinct
points of the standard rational normal curve.
This proves the inclusion from the right-hand side to the left and the
displayed equality of \(k\)-point sets for
\(\MDSopen_{n,r}\cap\Upower_{r,k}^{\times}\).

Conversely, assume that the companion orbit is MDS and lies on a rational
normal curve.  By Theorem~\ref{thm:rnc-rigidity}, its companion operator is
conjugate, up to a nonzero scalar, to \(\operatorname{Sym}^d(B)\) for some
\(B\in\operatorname{GL}_2(k)\).  If \(B\) has two distinct eigenvalues,
the symmetric-power eigenvalues form a geometric progression.  Cyclicity
of the companion operator forces these eigenvalues to be distinct; hence
\(g_a\) is squarefree and
\(a\in\GRSsurf_{r,k}\cap D(\Disc)\).  If \(B\) has only one eigenvalue, the
scalar case is not cyclic, so \(B\) is nontrivial unipotent up to scalar.
Its symmetric power has characteristic polynomial \((X-\beta)^r\), and
therefore \(a\in\Upower_{r,k}^{\times}\).  The two alternatives are disjoint because
the first is squarefree and the second has a single root of multiplicity
\(r\).
\end{proof}

\begin{lemma}[The non-pure collision boundary is non-MDS]
\label{lem:collision-boundary}
Let \(k\) be any field, let \(r\geq3\), let \(n\geq r+3\), and let
\(\gamma,t\in k^\times\).  Suppose that \(t\) has finite order
\(m\leq r-1\), and put
\[
g(X)=\prod_{i=0}^{r-1}(X-\gamma t^i).
\]
If \(m=1\), then \(t=1\) and \(g=(X-\gamma)^r\).  If \(m\geq2\), the
companion Krylov code \(\mathcal C_g\) is not MDS.
\end{lemma}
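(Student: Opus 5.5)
The case \(m=1\) is immediate: \(t=1\), so every root \(\gamma t^i\) equals \(\gamma\) and \(g=(X-\gamma)^r\). For \(m\ge2\), the plan is to exhibit a nonzero multiple of \(g\) of degree \(<n\) and coefficient weight at most \(r\). Theorem~\ref{thm:krylov-mds}(iii), equivalently Theorem~\ref{thm:determinantal-mds}(iii), then shows that \(\mathcal C_g\) is not MDS.

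Since \(t\) has order exactly \(m\), the powers \(\gamma t^i\) for \(0\le i\le r-1\) run only through the \(m\) distinct values \(\gamma,\gamma t,\ldots,\gamma t^{m-1}\). These are the distinct roots of \(X^m-\gamma^m=\prod_{j=0}^{m-1}(X-\gamma t^j)\); this identity holds because \(t\) is a primitive \(m\)-th root of unity in \(k\). Consequently \(\operatorname{char}k\nmid m\), and \(X^m-\gamma^m\) is separable. Write \(r=bm+c\) with \(b\ge1\) (since \(m\le r-1<r\)) and \(0\le c<m\). Then
\[
 g(X)=(X^m-\gamma^m)^{b}\prod_{j=0}^{c-1}(X-\gamma t^j),
\]
so every root of \(g\) has multiplicity at most \(b+1\). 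Hence \(g\) divides \(f(X)=(X^m-\gamma^m)^{b+1}\) when \(c>0\), and \(f=(X^m-\gamma^m)^b=g\) when \(c=0\).

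\emph{Case \(c=0\).} Here \(g=(X^m-\gamma^m)^b\) is a polynomial in \(X^m\), so its coefficient weight is at most \(b+1\). Because \(m\ge2\), we have \(b+1\le b+b=2b\le bm=r\). Also \(\deg g=r<n\), so \(g\) itself is a sparse multiple of weight at most \(r\).

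\emph{Case \(c>0\).} Take \(f=(X^m-\gamma^m)^{b+1}\). It has degree \((b+1)m=r-c+m\le r+m-1\le 2r-2\). We need \(\deg f<n\), but the hypothesis \(n\ge r+3\) only guarantees degree \(<n\) when \((b+1)m\le r+2\). This degree check is the main obstacle. The fix is to use instead
\[
 f=(X^m-\gamma^m)^b\prod_{j=0}^{c-1}(X-\gamma t^j)=g,
\]
which is useless because it is \(g\) itself. So a smarter sparse multiple is needed: divide \(g\) into a polynomial \(f=(X^m-\gamma^m)^b h(X)\), where \(h\) is a sparse multiple of \(\prod_{j<c}(X-\gamma t^j)\).

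For \(h\) the plan is to take \(X^m-\gamma^m\) only when that keeps the degree below \(n\). Otherwise use the alternative count: \(f=(X^m-\gamma^m)^{b+1}\) has weight \(\le b+2\), and \(b+2\le r\) whenever \(r\ge3\) and \(m\ge2\), so the weight bound is never the issue. The remaining task is to handle the degree bound \(n\). The first attempt would be to avoid it by a column-collision argument rather than sparse multiples. By Theorem~\ref{thm:root-jet-mds}, the jet parity-check matrix contains, for each distinct root \(\lambda_\nu=\gamma t^{j}\), the row \((\lambda_\nu^i)_i\) together with its Hasse-derivative rows. The columns indexed by \(i\) and \(i+m\) agree up to the factor \(\gamma^m\) in the plain rows, since \(\lambda_\nu^m=\gamma^m\) for all \(\nu\).

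Working with \(r\) columns indexed \(0,m,2m,\ldots\) (these fit since \(n\ge r+3\) allows taking indices among \(0,1,\ldots,r+2\)), the plan is to show the corresponding \(r\times r\) minor vanishes. Every row of that submatrix is the restriction of a polynomial-in-\(i\) times \(\gamma^{i}\) pattern. There are only \(b+1\) distinct jet orders, so the rank is at most \(m\cdot\)(something) \(<r\). If this rank count fails, fall back to the sparse multiple \(f\) whenever \((b+1)m<n\), which is the generic situation.
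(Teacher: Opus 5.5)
Your cases $m=1$ and $c=0$ are fine. So is the range $c\ge m-2$: there $(X^m-\gamma^m)^{b+1}$ has degree $r+m-c\le r+2<n$ and weight at most $b+2\le r$. The remaining range $1\le c\le m-3$, however, is not proved, and your argument has a genuine gap there. In that range $(X^m-\gamma^m)^{b+1}$ can have degree $\ge n$. For example, take $r=5$, $m=4$ (so $b=c=1$) and $n=8$; then this multiple has degree exactly $8=n$.

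Neither fallback closes the gap:
\begin{itemize}
\item The columns $0,m,2m,\ldots,(r-1)m$ do not lie in $\{0,\ldots,r+2\}$, nor in general in $\{0,\ldots,n-1\}$.
\item The rank bound is left as ``$m\cdot$(something)'', which is not an argument.
\item ``Fall back whenever $(b+1)m<n$'' is not available, because the lemma must hold for every $n\ge r+3$, not just generically.
\end{itemize}

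The missing idea is the one you discarded as ``useless'': $g$ itself is the sparse multiple. You already used it for $c=0$, and it still works for $1\le c\le m-2$. The factor $(X^m-\gamma^m)^b$ has at most $b+1$ terms and $\prod_{j<c}(X-\gamma t^j)$ has at most $c+1$ terms, so $g$ has coefficient weight at most $(b+1)(c+1)$. Moreover $r-(b+1)(c+1)=b(m-c-1)-1\ge 0$ exactly when $c\le m-2$. Since $\deg g=r<n$, Theorem~\ref{thm:krylov-mds} then gives the obstruction directly. Only for $c=m-1$ must you multiply by the missing factor. Then $(X-\gamma t^{m-1})g=(X^m-\gamma^m)^{b+1}$ has degree exactly $r+1<n$ and weight at most $b+2\le r$. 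This split, $c\le m-2$ versus $c=m-1$, is precisely how the paper's proof is organized. In the example above, $g=(X^4-\gamma^4)(X-\gamma)=X^5-\gamma X^4-\gamma^4X+\gamma^5$ already has weight $4\le 5$.
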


\begin{proof}
The assertion for \(m=1\) is immediate, so assume \(m\geq2\).  Write
\[
r=\ell m+s,
\qquad \ell\geq1,
\qquad 0\leq s<m.
\]
Then
\[
g(X)=(X^m-\gamma^m)^\ell
       \prod_{i=0}^{s-1}(X-\gamma t^i).
\]
This factorization is valid in arbitrary characteristic: an element of
exact multiplicative order \(m\) gives all \(m\) distinct roots of
\(X^m-1\).  Any coefficient cancellations in positive characteristic
can only lower the weight estimates below.
If \(s\leq m-2\), its coefficient weight is at most
\[
(\ell+1)(s+1)\leq \ell m+s=r,
\]
because
\(r-(\ell+1)(s+1)=\ell(m-s-1)-1\geq0\).
Thus \(g\) itself is a nonzero multiple of degree less than \(n\) and
weight at most \(r\).

If \(s=m-1\), multiply by the missing factor to obtain
\[
(X-\gamma t^{m-1})g(X)=(X^m-\gamma^m)^{\ell+1}.
\]
This multiple has degree \(r+1<n\) and coefficient weight at most
\(\ell+2\leq r\); the last inequality follows from \(m\geq2\) and
\(\ell\geq1\).  In either case Theorem~\ref{thm:krylov-mds} supplies a
sparse-multiple obstruction to the MDS property.
\end{proof}

\begin{remark}[The collision boundary]
The discriminant exclusion in
Theorem~\ref{thm:geometric-grs-locus} keeps the squarefree semisimple
mechanism separate from the unipotent mechanism.  Although
\(\Upower_{r,k}^{\times}\subseteq\GRSsurf_{r,k}\) through the specialization
\(t=1\), the
pure-power curve belongs to the non-squarefree boundary and must be recorded
separately.  Lemma~\ref{lem:collision-boundary} shows that, in the stable
range, every other collision point with nonzero constant term lies on the
determinantal non-MDS boundary.  We retain the decomposed formula rather
than suppressing this geometric distinction in a compressed set equality.
\end{remark}

\begin{corollary}[The cubic hypersurface and higher codimension]
\label{cor:cubic-surface}
Let \(k\) be an algebraically closed field.  For \(r=3\),
\[
\GRSsurf_{3,k}=V(a_0a_2^3-a_1^3)\subseteq\A_k^3.
\]
For \(r\geq4\),
\[
\operatorname{codim}_{\A_k^r}\GRSsurf_{r,k}=r-2\geq2,
\]
so no single nonzero polynomial defines the complete GRS coefficient
closure set-theoretically.
\end{corollary}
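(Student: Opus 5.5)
For \(r=3\) I will show that \(\GRSsurf_{3,k}\) and \(V(a_0a_2^3-a_1^3)\) are both irreducible surfaces in \(\A_k^3\) and that one contains the other. Substituting \(\Phi_{3,k}(\gamma,t)\) gives
\[
a_2=-\gamma(1+t+t^2),\qquad a_1=\gamma^2t(1+t+t^2),\qquad a_0=-\gamma^3t^3 .
\]
Hence
\[
a_0a_2^3=\gamma^6t^3(1+t+t^2)^3=a_1^3 .
\]
So the polynomial \(F=a_0a_2^3-a_1^3\) vanishes on \(\operatorname{im}\Phi_{3,k}\), and therefore on its closure, giving \(\GRSsurf_{3,k}\subseteq V(F)\).

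Next I check that \(F\) is irreducible in \(k[a_0,a_1,a_2]\). As a polynomial in \(a_0\) it has degree one, with coefficient \(a_2^3\) and constant term \(-a_1^3\). These are coprime, so \(F\) is primitive of degree one in \(a_0\) and hence irreducible over \(k[a_1,a_2]\); this works in every characteristic. Thus \(V(F)\) is an irreducible surface. By Theorem~\ref{thm:coefficient-surface}, \(\GRSsurf_{3,k}\) is an irreducible closed subset of dimension \(2\) inside it, so the two closed sets coincide. Since \(\GRSsurf_{3,k}\) carries the reduced structure and \(F\) is irreducible (so \((F)\) is prime), the equality holds as reduced subschemes as well as set-theoretically. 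The only step needing care is the irreducibility of \(F\), which the degree-one argument settles.

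For \(r\ge4\), the codimension \(r-2\ge2\) is exactly Theorem~\ref{thm:coefficient-surface}(4). Suppose a single nonzero polynomial \(f\) satisfied \(V(f)=\GRSsurf_{r,k}\) set-theoretically. Then \(f\) is nonconstant, because \(\GRSsurf_{r,k}\) is nonempty and is not all of \(\A^r_k\). By Krull's principal ideal theorem, every irreducible component of \(V(f)\) then has codimension exactly one. That contradicts codimension \(r-2\ge2\), which proves the final clause.
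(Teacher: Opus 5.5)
Your proposal is correct and follows the paper's proof: substitution gives \(\GRSsurf_{3,k}\subseteq V(a_0a_2^3-a_1^3)\), primitivity together with degree one in \(a_0\) makes the polynomial irreducible, equality follows by comparing irreducible surfaces, and the \(r\ge4\) case is Theorem~\ref{thm:coefficient-surface}\textup{(4)}. Your Krull principal-ideal argument only makes explicit the ``no single polynomial'' clause, which the paper leaves implicit; your reduced-subscheme remark is likewise just an extra detail.
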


\begin{proof}
Substitution of
\(g(X)=\prod_{i=0}^{2}(X-\gamma t^i)\) gives
\(a_0a_2^3-a_1^3=0\).  To verify irreducibility in every
characteristic, set \(R=k[a_1,a_2]\).  The polynomial
\[
F=a_2^3a_0-a_1^3\in R[a_0]
\]
is primitive because \(\gcd(a_2^3,a_1^3)=1\).  Over
\(\operatorname{Frac}(R)\), it has degree one in \(a_0\), and hence is
irreducible.  Gauss's lemma shows that \(F\) is irreducible in
\(k[a_0,a_1,a_2]\); this argument is unchanged in characteristic three.
Thus \(V(F)\) is an irreducible surface.  The irreducible surface
\(\GRSsurf_{3,k}\) is contained in it, so equality follows.  The assertion
for \(r\geq4\) is the codimension statement of
Theorem~\ref{thm:coefficient-surface}.
\end{proof}

\begin{corollary}[Exact cubic GRS criterion]
\label{cor:exact-cubic-grs}
Let
\[
g(X)=X^3+a_2X^2+a_1X+a_0\in\F_q[X],
\qquad n\geq6.
\]
Then
\[
\mathcal C_g\text{ is MDS and of GRS type}
\quad\Longleftrightarrow\quad
D_{n,3}(g)\ne0
\quad\text{and}\quad
a_0a_2^3-a_1^3=0.
\]
Equivalently, inside the cubic MDS locus,
\[
\mathcal C_g\text{ is of GRS type}
\quad\Longleftrightarrow\quad
a_0a_2^3=a_1^3.
\]
\end{corollary}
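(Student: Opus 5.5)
The plan is to reduce to the arithmetic classification, Theorem~\ref{thm:arithmetic-classification}, with \(r=3\), \(d=2\), \(n\ge6=r+3\). By Theorem~\ref{thm:determinantal-mds}, MDS is equivalent to \(D_{n,3}(g)\ne0\). So it suffices to show the following: for \(g\) in the MDS locus, \(g\) lies in one of the families (U), (S), (N1) if and only if \(a_0a_2^3=a_1^3\). Family (N0) does not occur, since \(r\) is odd.

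\emph{Forward direction.} Suppose \(g\) is MDS and of GRS type. In families (S) and (N1), the roots over \(\F_{q^2}\) are \(\gamma,\gamma t,\gamma t^2\); for (N1) take \(\gamma=\beta t^{-1}\). Substituting these roots, as in Corollary~\ref{cor:cubic-surface}, gives
\[
a_2=-\gamma(1+t+t^2),\qquad a_1=\gamma^2t(1+t+t^2),\qquad a_0=-\gamma^3t^3.
\]
Hence \(a_0a_2^3=\gamma^6t^3(1+t+t^2)^3=a_1^3\). This is a polynomial identity, so it holds over \(\F_q\) once the coefficients are rational. In family (U), \(g=(X-\beta)^3\), so \(a_2=-3\beta\), \(a_1=3\beta^2\), \(a_0=-\beta^3\), and \(a_0a_2^3=27\beta^6=a_1^3\). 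This is also the specialization \(t=1\) of the identity.

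\emph{Converse.} Assume \(D_{n,3}(g)\ne0\) and \(a_0a_2^3=a_1^3\). The MDS condition gives \(a_0\ne0\). Pass to \(k=\overline{\F}_q\). By Corollary~\ref{cor:cubic-surface}, the coefficient point lies on \(\GRSsurf_{3,k}\cap D(a_0)\). Theorem~\ref{thm:coefficient-surface}(5) then supplies \(\gamma,t\in k^\times\) with \(g=\prod_{i=0}^2(X-\gamma t^i)\). Split by the order of \(t\).

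If \(t=1\), then \(g=(X-\gamma)^3\) lies in \(\Upower^\times\). If \(t\) has order \(2\le m\le 2\), Lemma~\ref{lem:collision-boundary} contradicts MDS. Otherwise the roots are distinct, and \(g\) lies in \(\GRSsurf\cap D(\Disc)\cap\MDSopen\). In every surviving case, Theorem~\ref{thm:geometric-grs-locus} places the coefficient point in \(\operatorname{GRS}_{n,3}(k)\), meaning the orbit arc lies on a geometric RNC. Corollary~\ref{cor:absolute-rnc} then descends this containment to \(\F_q\) and yields GRS type. The second displayed equivalence is immediate, since inside the MDS locus the condition \(D_{n,3}(g)\ne0\) holds automatically.

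The main obstacle is this converse: arithmetic descent of the geometric-progression parameters is needed, and it is handled entirely by the rigidity and descent results, Theorem~\ref{thm:rnc-rigidity} and Corollary~\ref{cor:absolute-rnc}. The one point to check with care is the use of Theorem~\ref{thm:coefficient-surface}(5), which requires \(a_0\ne0\); the MDS hypothesis supplies exactly this, so points of \(\GRSsurf_{3,k}\) on the boundary \(a_0=0\) never arise.
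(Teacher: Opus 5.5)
Your proposal is correct and follows the paper's proof. The converse is the same argument: pass to $\overline{\F}_q$, apply Theorem~\ref{thm:coefficient-surface}(5), rule out $\ord(t)=2$ by Lemma~\ref{lem:collision-boundary}, invoke Theorem~\ref{thm:geometric-grs-locus} in the distinct-root and pure-power cases, and descend by Corollary~\ref{cor:absolute-rnc}. The only difference is in the forward direction: you identify the family via Theorem~\ref{thm:arithmetic-classification} and substitute the roots explicitly, whereas the paper goes through Corollary~\ref{cor:absolute-rnc}, Theorem~\ref{thm:geometric-grs-locus} and Corollary~\ref{cor:cubic-surface}. Both routes are valid.
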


\begin{proof}
If \(\mathcal C_g\) is MDS and of GRS type, then
\(D_{n,3}(g)\ne0\) by Theorem~\ref{thm:determinantal-mds}.  After extension
to \(\overline{\F}_q\), Corollary~\ref{cor:absolute-rnc} and
Theorem~\ref{thm:geometric-grs-locus} place the coefficient point of \(g\)
on \(\GRSsurf_{3,\overline{\F}_q}\).  Corollary~\ref{cor:cubic-surface}
therefore gives
\[
a_0a_2^3-a_1^3=0.
\]

Conversely, assume that \(D_{n,3}(g)\ne0\) and
\(a_0a_2^3-a_1^3=0\).  The code is MDS by
Theorem~\ref{thm:determinantal-mds}; hence \(a_0\ne0\) by
Theorem~\ref{thm:krylov-mds}.  Over \(\overline{\F}_q\),
Corollary~\ref{cor:cubic-surface} places the coefficient point on
\(\GRSsurf_{3,\overline{\F}_q}\), while
Theorem~\ref{thm:coefficient-surface}\textup{(5)} supplies
\(\gamma,t\in\overline{\F}_q^\times\) such that
\[
g(X)=\prod_{i=0}^{2}(X-\gamma t^i).
\]
If these roots are distinct, Theorem~\ref{thm:geometric-grs-locus} places
the orbit on an RNC over \(\overline{\F}_q\).  Otherwise,
\(\ord(t)\leq2\).  Lemma~\ref{lem:collision-boundary} excludes
\(\ord(t)=2\), because the code is MDS.  Hence \(t=1\), so \(g\) is a pure
power; Theorem~\ref{thm:geometric-grs-locus} again gives geometric RNC
containment.  Finally, Corollary~\ref{cor:absolute-rnc} descends this
containment and shows that \(\mathcal C_g\) is of GRS type over \(\F_q\).
\end{proof}

\begin{remark}
Corollary~\ref{cor:exact-cubic-grs} gives a single-equation recognition
theorem on the cubic MDS locus.  This is special to \(r=3\): for \(r\geq4\),
the coefficient surface \(\GRSsurf_{r,k}\) has codimension \(r-2\geq2\) in
\(\A_k^r\), and therefore cannot be cut out set-theoretically by a single
nonzero polynomial.
\end{remark}

\subsection{Frobenius descent and exact counting}

The generic-fiber calculation used a transcendental ratio.  Exact counting
over a finite field instead requires a finite-order version with a range
that rules out wraparound coincidences.

\begin{lemma}[Uniqueness up to reversal]
\label{lem:gp-uniqueness}
Let \(d\geq2\), let \(t\) have finite order \(m\geq d+4\), and suppose
\[
\{\gamma t^i:0\leq i\leq d\}
=
\{\delta u^i:0\leq i\leq d\},
\]
where both sets consist of \(d+1\) distinct elements.  Then either
\[
(\delta,u)=(\gamma,t)
\]
or
\[
(\delta,u)=(\gamma t^d,t^{-1}).
\]
\end{lemma}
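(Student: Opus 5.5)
The plan is to reduce the equality of root sets to a combinatorial statement about an arithmetic progression inside the cyclic group \(\mathbb Z/m\mathbb Z\), and then use the gap \(m-d-1\ge3\) to exclude every step other than \(\pm1\).

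\emph{Reduction to \(\mathbb Z/m\mathbb Z\).} Since \(\delta=\delta u^0\) lies in the left-hand set, \(\delta=\gamma t^a\) for some \(0\le a\le d\). Likewise \(\delta u\) is in that set, so \(u=\delta u/\delta=t^e\) for some \(e\in\mathbb Z\). The map \(i\mapsto\gamma t^i\) is a bijection from \(\mathbb Z/m\mathbb Z\) onto the coset \(\gamma\langle t\rangle\). The hypothesis therefore becomes
\[
\{a+ie:0\le i\le d\}=I:=\{0,1,\ldots,d\}\subseteq\mathbb Z/m\mathbb Z,
\]
with the \(d+1\) elements on the left distinct. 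In particular \(e\not\equiv0\). I would choose the representative \(e\in(-m/2,m/2]\) and write \(\epsilon=|e|\ge1\). Since \(e\) and \(-e\) give the same overlap count below, this normalization loses nothing.

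\emph{The overlap count (main step).} The progression contains the \(d\) consecutive pairs \((a+ie,\,a+(i+1)e)\) for \(0\le i<d\). Hence
\[
|I\cap(I-e)|\ge d.
\]
Now compute \(|I\cap(I-e)|=\#\{x\in I: x+e\in I\}\) directly. Put \(c=m-d-1\ge3\), the size of the complement of \(I\). For \(1\le\epsilon\le m/2\) the count is at most
\[
\max(0,\,d+1-\epsilon)+\max(0,\,\epsilon-c).
\]
The first term counts non-wrapping pairs. The second counts pairs that wrap around through the complement, which requires the jump \(\epsilon\) to exceed the gap \(c\).

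There are three cases. If \(\epsilon\ge2\) and both terms are positive, the bound is \(2d+2-m\le d-2<d\). If only the first term is positive, the bound is \(d+1-\epsilon\le d-1\). If only the second is positive, the bound is \(\epsilon-c\le m/2-m+d+1\le d+1-m/2\), and this is less than \(d\) because \(m\ge d+4\ge 6\).

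Each case contradicts \(|I\cap(I-e)|\ge d\), so \(\epsilon=1\), i.e.\ \(e\equiv\pm1\). Checking the wraparound term carefully, so that \(m\ge d+4\) is exactly what is needed, is the step I expect to require the most care.

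\emph{Conclusion.} If \(e=1\), the set \(\{a,a+1,\ldots,a+d\}\) is a cyclic interval of length \(d+1\) equal to \(I\). Because the complement of \(I\) is nonempty, the starting point of such an interval is determined, so \(a=0\) and \((\delta,u)=(\gamma,t)\). If \(e=-1\), the set is the interval \(\{a-d,\ldots,a\}\), whose maximal endpoint must be \(d\). Then \(a=d\) and \((\delta,u)=(\gamma t^d,t^{-1})\).
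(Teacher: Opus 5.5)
Your proposal is correct and follows essentially the same route as the paper. You reduce to $a+e\{0,\dots,d\}=\{0,\dots,d\}$ in $\mathbb Z/m\mathbb Z$, bound the overlap count $\#\{x\in I:x+e\in I\}\ge d$ using the $d$ consecutive pairs, evaluate it as $\max(0,d+1-\epsilon)+\max(0,\epsilon-(m-d-1))$ to force $e\equiv\pm1$, and then read off $a=0$ or $a=d$; your normalization $e\in(-m/2,m/2]$ and explicit case check only spell out what the paper asserts directly for representatives $1\le h\le m-1$, though you should also list the trivial case where both terms vanish.
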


\begin{proof}
Since \(u\) is the quotient of two elements of the left-hand set, there is
an \(e\in\mathbb Z/m\mathbb Z\) such that \(u=t^e\).  Likewise,
\(\delta=\gamma t^a\) for some \(a\in\mathbb Z/m\mathbb Z\).  Hence
\[
a+e\{0,1,\ldots,d\}=\{0,1,\ldots,d\}
\]
inside \(\mathbb Z/m\mathbb Z\).

Put \(I=\{0,1,\ldots,d\}\), and for \(h\in\mathbb Z/m\mathbb Z\) define
\[
N(h)=\bigl|\{x\in I:x+h\in I\}\bigr|.
\]
The progression \(a,a+e,\ldots,a+de\) has \(d\) consecutive ordered pairs
of difference \(e\), so \(N(e)\geq d\).  For the representative
\(1\leq h\leq m-1\), direct counting gives
\[
N(h)
=
\max\{0,d+1-h\}
+
\max\{0,d+1-(m-h)\}.
\]
Since \(m\geq d+4\), this number equals \(d\) only for \(h=1\) and
\(h=m-1\); for every other \(h\), it is at most \(d-1\).  Therefore
\(e=\pm1\).  If \(e=1\), equality of the two intervals forces \(a=0\).
If \(e=-1\), it forces \(a=d\), proving the assertion.
\end{proof}

\begin{theorem}[Frobenius descent on the coefficient surface]
\label{thm:frobenius-descent}
Let \(r\geq3\), let \(n\geq r+3\), and let
\(g\in\F_q[X]\) be monic, squarefree, and of degree \(r\).  Assume that
\(\mathcal C_g\) is MDS and that the coefficient point of \(g\) belongs to
\[
\GRSsurf_{r,\overline{\F}_q}(\overline{\F}_q).
\]
Then exactly one of the following alternatives occurs.
\begin{enumerate}[label=\textup{(\arabic*)}]
\item There exist \(\gamma,t\in\F_q^\times\) with
      \(\ord(t)\geq n\) such that
      \[
      g(X)=\prod_{i=0}^{r-1}(X-\gamma t^i).
      \]
\item There exist \(\gamma,t\in\F_{q^2}^\times\) with
      \[
      t^{q+1}=1,
      \qquad
      \gamma^q=\gamma t^{r-1},
      \qquad
      \ord(t)\geq n
      \]
      such that
      \[
      g(X)=\prod_{i=0}^{r-1}(X-\gamma t^i).
      \]
\end{enumerate}
In the second case Frobenius reverses the root sequence.  If \(r\) is even,
\(g\) has no \(\F_q\)-root; if \(r\) is odd, it has exactly one
\(\F_q\)-root.
\end{theorem}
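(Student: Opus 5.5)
Since the code is MDS, $a_0\ne0$ by Theorem~\ref{thm:determinantal-mds}, so the coefficient point lies in $\GRSsurf_{r,\overline{\F}_q}\cap D(a_0)$. Theorem~\ref{thm:coefficient-surface}\textup{(5)} then supplies $\gamma,t\in\overline{\F}_q^\times$ with $g=\prod_{i=0}^{d}(X-\gamma t^i)$, where $d=r-1$. Squarefreeness makes the $r$ roots distinct, so $\ord(t)>d$.

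\textbf{Step 1: the order bound.} We first upgrade this to $\ord(t)\ge n$. If $t^m=1$ with $1\le m<n$, then in the root-evaluation parity-check matrix of Theorem~\ref{thm:root-jet-mds} (squarefree case) the columns indexed by $j$ and $j+m$ are proportional, by the factor $\gamma^m$. This contradicts the MDS property, exactly as in the proof of Theorem~\ref{thm:geometric-grs-locus}. Hence $m=\ord(t)\ge n\ge r+3=d+4$, and $t$ has finite order because it lies in $\overline{\F}_q^\times$. This is precisely the range in which Lemma~\ref{lem:gp-uniqueness} applies.

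\textbf{Step 2: Frobenius action.} Since $g\in\F_q[X]$, the Frobenius $\sigma(x)=x^q$ permutes the root set, so
\[
\{\gamma t^i\}=\{\gamma^q (t^q)^i\}.
\]
Both sets consist of $d+1$ distinct elements. By Lemma~\ref{lem:gp-uniqueness}, either $(\gamma^q,t^q)=(\gamma,t)$ or $(\gamma^q,t^q)=(\gamma t^d,t^{-1})$.

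In the first case $\gamma,t\in\F_q^\times$, which is alternative (1).

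In the second case $t^{q+1}=1$ and $\gamma^q=\gamma t^d$. Then $t^{q^2}=(t^{-1})^q=t$, so $t\in\F_{q^2}$. Also
\[
\gamma^{q^2}=(\gamma t^d)^q=\gamma t^d\,t^{-d}=\gamma,
\]
so $\gamma\in\F_{q^2}$. This is alternative (2).

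\textbf{Step 3: exclusivity and rational roots.} Suppose both alternatives held. Then $t\in\F_q$ and $t^{q+1}=1$ together give $t^2=t^{q+1}=1$, contradicting $\ord(t)\ge n>2$. Strictly speaking, the two alternatives must be compared for possibly different parametrizations. Lemma~\ref{lem:gp-uniqueness} handles this: any two parametrizations of the same root set differ by the identity or by $\iota$, and $\iota$ preserves both the condition $t\in\F_q$ and the condition $t^{q+1}=1$. So the alternatives are indeed exclusive.

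In case (2), the relation gives $(\gamma t^i)^q=\gamma t^{d}t^{-i}=\gamma t^{d-i}$. Thus Frobenius reverses the sequence. Since $\ord(t)>d$, the root $\gamma t^i$ is fixed exactly when $i=d-i$, i.e.\ $2i=d$. This has no solution when $r$ is even (so $d$ is odd) and exactly one solution when $r$ is odd.

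The main obstacle is Step 1: obtaining $\ord(t)\ge d+4$ so that Lemma~\ref{lem:gp-uniqueness} applies. Without it, wraparound coincidences could produce parametrizations other than $\iota$. The MDS column-proportionality argument supplies this bound.
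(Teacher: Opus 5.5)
Your proposal is correct and follows essentially the same route as the paper. You obtain $(\gamma,t)$ from Theorem~\ref{thm:coefficient-surface}\textup{(5)}, upgrade to $\ord(t)\ge n\ge d+4$ by column proportionality, and then use Lemma~\ref{lem:gp-uniqueness} to see that Frobenius either fixes $(\gamma,t)$ or sends it to $\iota(\gamma,t)$. The only cosmetic difference is the order of steps: the paper first identifies the full two-point fiber of $\Phi_{r,\overline{\F}_q}$ over $g$ and lets Frobenius permute it, so exclusivity follows at once because $\iota$ has no fixed point there; you apply the lemma directly to $(\gamma^q,t^q)$ and then check exclusivity by comparing parametrizations explicitly.
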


\begin{proof}
The MDS condition gives \(a_0\neq0\).  By
Theorem~\ref{thm:coefficient-surface}\textup{(5)}, applied with
\(k=\overline{\F}_q\), there exist
\(\gamma,t\in\overline{\F}_q^\times\) such that
\[
g(X)=\prod_{i=0}^{r-1}(X-\gamma t^i)
\]
over \(\overline{\F}_q\).  If \(t^m=1\) for some \(1\leq m<n\), then
columns whose exponents differ by \(m\) in the root-evaluation
parity-check matrix are proportional, contrary to the MDS hypothesis.
Hence \(\ord(t)\geq n\).  Since
\(n\geq r+3=d+4\), Lemma~\ref{lem:gp-uniqueness} shows that the fiber of
\(\Phi_{r,\overline{\F}_q}\) over \(g\) consists exactly of
\[
(\gamma,t),
\qquad
(\gamma t^{r-1},t^{-1}).
\]

Let \(\sigma(x)=x^q\).  Since \(\Phi_{r,\overline{\F}_q}\) is defined over
\(\F_q\) and \(g\) is \(\F_q\)-rational,
\[
\Phi_{r,\overline{\F}_q}(\gamma^q,t^q)
=\sigma\!\left(\Phi_{r,\overline{\F}_q}(\gamma,t)\right)=g.
\]
Thus Frobenius permutes the two parameter points.  It either fixes
\((\gamma,t)\), in which case
\(\gamma,t\in\F_q^\times\), or exchanges the two, in which case
\[
(\gamma^q,t^q)=(\gamma t^{r-1},t^{-1}).
\]
The exchange equations are equivalent to
\[
t^{q+1}=1,
\qquad
\gamma^q=\gamma t^{r-1},
\]
and also imply \(\gamma,t\in\F_{q^2}^\times\).  Reversal has no fixed
parameter point under the order assumption, so the two alternatives are
exclusive.

In the nonsplit case Frobenius sends the root indexed by \(i\) to the root
indexed by \(r-1-i\).  Since the roots are distinct and
\(\ord(t)\geq n>r-1\), a root is fixed precisely when
\(2i=r-1\).  Such an index exists exactly when \(r\) is odd, and then it
is unique.
\end{proof}

For a positive integer \(M\), let \(C_M\) be a cyclic group of order \(M\)
and put
\[
E_M(n)
=
\#\{t\in C_M:\ord(t)\geq n\}
=
\sum_{\substack{e\mid M\\ e\geq n}}\varphi(e).
\]

Retain the notation \(\mathcal G_{n,r}(q)\) from
Section~\ref{sec:finite-field-classification} for the set of monic
degree-\(r\) polynomials \(g\in\F_q[X]\) for which \(\mathcal C_g\) is MDS
and of GRS type.

\begin{theorem}[Exact number of GRS polynomials]
\label{thm:exact-grs-count}
Let \(r\geq3\), let \(n\geq r+3\), and put
\(p=\operatorname{char}\F_q\).  Then
\[
\boxed{
|\mathcal G_{n,r}(q)|
=
(q-1)\mathbf 1_{\{n\leq p\}}
+
\frac{q-1}{2}
\bigl(E_{q-1}(n)+E_{q+1}(n)\bigr).
}
\]
This is a count of polynomials, not of monomial-equivalence classes of
codes.
\end{theorem}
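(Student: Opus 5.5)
The count rests on Theorem~\ref{thm:arithmetic-classification}: \(\mathcal G_{n,r}(q)\) is the disjoint union \(\mathcal U\dot\cup\mathcal S\dot\cup\mathcal N_0\dot\cup\mathcal N_1\). I will count each family separately and then combine them according to the parity of \(r\).

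\emph{The unipotent family.} By Lemma~\ref{lem:pure-power-mds}, \(\mathcal U\) is empty unless \(n\le p\). When \(n\le p\), it consists of the polynomials \((X-\beta)^r\) with \(\beta\in\F_q^\times\). Distinct values of \(\beta\) give distinct polynomials, so \(|\mathcal U|=(q-1)\mathbf 1_{\{n\le p\}}\).

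\emph{The split family.} Consider the parameter set
\[
\{(\gamma,t)\in\F_q^\times\times\F_q^\times:\ord(t)\ge n\}.
\]
Here \(t\) ranges over the cyclic group \(C_{q-1}=\F_q^\times\), so this set has exactly \((q-1)E_{q-1}(n)\) elements. Every such pair gives an element of \(\mathcal S\) through \(\Phi_r\), and every element of \(\mathcal S\) arises this way. The polynomial determines its root set, and the roots are distinct because \(\ord(t)\ge n>d\). Since \(\ord(t)\ge n\ge d+4\), Lemma~\ref{lem:gp-uniqueness} shows that the fiber over each polynomial is exactly \(\{(\gamma,t),\iota(\gamma,t)\}\). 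Both members lie in the parameter set, because \(\gamma t^d\in\F_q^\times\) and \(\ord(t^{-1})=\ord(t)\). The two members are distinct since \(t\ne t^{-1}\), which follows from \(\ord(t)\ge n>2\). Hence \(|\mathcal S|=\tfrac{q-1}{2}E_{q-1}(n)\).

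\emph{The nonsplit families.} Here \(t\) ranges over the norm-one subgroup \(\{t\in\F_{q^2}^\times:t^{q+1}=1\}\), which is cyclic of order \(q+1\). It contributes \(E_{q+1}(n)\) admissible ratios. For each admissible \(t\) I need to count the \(\gamma\in\F_{q^2}^\times\) with \(\gamma^{q-1}=t^d\). The map \(x\mapsto x^{q-1}\) sends \(\F_{q^2}^\times\) onto the norm-one group with kernel \(\F_q^\times\), as used in the sufficiency proof of Theorem~\ref{thm:arithmetic-classification}. Since \(t^d\) has norm one, there are exactly \(q-1\) solutions \(\gamma\). So the parameter set has \((q-1)E_{q+1}(n)\) elements.

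The resulting polynomials lie in \(\F_q[X]\), because Frobenius permutes the roots by reversal. They are exactly \(\mathcal N_0\) when \(r\) is even and \(\mathcal N_1\) when \(r\) is odd, by Theorem~\ref{thm:frobenius-descent} and Theorem~\ref{thm:arithmetic-classification}. For (N1), the parametrization via \(\beta\) is converted to \((\gamma,t)\) by setting \(\gamma=\beta t^{-v}\); then \(\gamma^q=\beta t^{v}=\gamma t^{d}\), as required.

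The involution \(\iota\) preserves these descent conditions. Its new ratio \(t^{-1}\) still has norm one. Its new base \(\gamma'=\gamma t^d\) satisfies \(\gamma'^q=\gamma t^d t^{-d}=\gamma\), while \(\gamma' t'^d=\gamma t^dt^{-d}=\gamma\), so \(\gamma'^q=\gamma't'^d\). By Lemma~\ref{lem:gp-uniqueness} the fibers are again free \(\iota\)-orbits of size two. This gives \(|\mathcal N_0|+|\mathcal N_1|=\tfrac{q-1}{2}E_{q+1}(n)\), where the family of the wrong parity is empty.

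The main obstacle is that Lemma~\ref{lem:gp-uniqueness} is stated over a field containing the roots. I will apply it in \(\overline{\F}_q\) and then intersect the resulting fiber with the relevant parameter set. Adding the three counts gives the boxed formula.
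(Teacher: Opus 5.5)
Your proposal is correct and follows essentially the same route as the paper. You count the split and nonsplit parameter pairs, use Lemma~\ref{lem:gp-uniqueness} to show that each fiber is a free $\iota$-orbit of size two, and add the $(q-1)\mathbf 1_{\{n\le p\}}$ pure-power term. The only minor difference is that you take disjointness of the split and nonsplit families from the mutual exclusivity in Theorem~\ref{thm:arithmetic-classification}. The paper instead argues it directly: a common ratio would lie in both $\F_q^\times$ and the norm-one subgroup, so it would have order at most two.
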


\begin{proof}
For the split semisimple family, choose
\[
\gamma\in\F_q^\times,
\qquad
t\in\F_q^\times,
\qquad
\ord(t)\geq n.
\]
There are \((q-1)E_{q-1}(n)\) ordered pairs.  Reversal acts by
\[
(\gamma,t)\longmapsto(\gamma t^{r-1},t^{-1})
\]
and preserves the polynomial.  It acts freely because
\(\ord(t)\geq n\geq6\), and
Lemma~\ref{lem:gp-uniqueness} shows that these are the only two parameter
pairs defining the same polynomial.  Hence the split family contributes
\[
\frac{q-1}{2}E_{q-1}(n).
\]

For the nonsplit family, \(t\) ranges over the norm-one cyclic subgroup of
\(\F_{q^2}^\times\), which has order \(q+1\), subject to
\(\ord(t)\geq n\).  For every such \(t\), the equation
\[
\gamma^{q-1}=t^{r-1}
\]
has exactly \(q-1\) solutions in \(\F_{q^2}^\times\): the map
\(x\mapsto x^{q-1}\) surjects onto the norm-one subgroup and has kernel
\(\F_q^\times\).  Reversal is again free and, by
Lemma~\ref{lem:gp-uniqueness}, gives the only duplication.  The nonsplit
family therefore contributes
\[
\frac{q-1}{2}E_{q+1}(n).
\]
The split and nonsplit contributions are disjoint.  Indeed, an overlap of
their root sets would, by Lemma~\ref{lem:gp-uniqueness}, identify the two
ratios up to inversion.  That ratio would then lie both in
\(\F_q^\times\) and in the norm-one subgroup, and so would have order at
most two, contrary to \(\ord(t)\geq n\).

Finally, the pure-power family contains one polynomial
\((X-\beta)^r\) for each \(\beta\in\F_q^\times\).  By
Lemma~\ref{lem:pure-power-mds}, it is MDS precisely when \(n\leq p\), so
this family contributes \((q-1)\mathbf 1_{\{n\leq p\}}\).  Its root
structure is disjoint from both squarefree semisimple families.  Adding the
three contributions proves the formula.
\end{proof}

\subsection{Asymptotic genericity of non-GRS MDS codes}

The coefficient surface has dimension two, whereas the MDS locus is a
nonempty open subset of the full \(r\)-dimensional coefficient space.  The
exact count makes the resulting asymptotic separation quantitative.

\begin{theorem}[Asymptotic genericity of MDS non-GRS codes]
\label{thm:generic-nongrs}
Fix integers \(r\geq3\) and \(n\geq r+3\).  As \(q\to\infty\) through
prime powers,
\[
\#\left\{
g\in\F_q[X]:
\begin{array}{l}
g\text{ is monic of degree }r,\\
\mathcal C_g\text{ is MDS and non-GRS}
\end{array}
\right\}
=q^r+O_{n,r}(q^{r-1}).
\]
Equivalently, the proportion of monic degree-\(r\) polynomials defining MDS
non-GRS codes tends to one.
\end{theorem}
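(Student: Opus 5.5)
The count is obtained by decomposing via the trichotomy of Theorem~\ref{thm:complete-trichotomy}: the set in question is \(\mathcal N_{n,r}(q)=\mathcal M_{n,r}(q)\setminus\mathcal G_{n,r}(q)\), so
\[
 |\mathcal N_{n,r}(q)|
 = q^r-\bigl|\mathcal P_{n,r}(q)\setminus\mathcal M_{n,r}(q)\bigr|-|\mathcal G_{n,r}(q)|,
\]
using \(|\mathcal P_{n,r}(q)|=q^r\). It then suffices to show that both subtracted terms are \(O_{n,r}(q^{r-1})\). The implicit constants may depend on \(n\) and \(r\) but not on \(q\) or on the characteristic.

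For the non-MDS term, use Theorem~\ref{thm:determinantal-mds}, which identifies it with the zero set of \(D_{n,r}\) in \(\F_q^r\). By Proposition~\ref{prop:mds-polynomial-nonzero}, the reduction of \(D_{n,r}\) modulo \(p\) is nonzero for every prime \(p\), hence nonzero as a polynomial over \(\F_q\). Let \(\delta_{n,r}\) be the total degree of \(D_{n,r}\in\mathbb Z[a_0,\ldots,a_{r-1}]\). This integer depends only on \(n,r\), and reduction mod \(p\) can only lower the degree. Lemma~\ref{lem:finite-field-zero-bound} then gives at most \(\delta_{n,r}q^{r-1}\) zeros.

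This uniformity is the one step needing care. To make \(\delta_{n,r}\) explicit, I would note that each entry of \(Q_i\) is an integral polynomial of degree at most \(\max(0,i-r+1)\) in the \(a_j\), by induction on \eqref{eq:remainder-recurrence}. Each \(\delta_U\) therefore has degree at most \(r(n-r)\), and
\[
 \delta_{n,r}\le 1+\binom{n-1}{r-1}r(n-r).
\]

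For the GRS term, Theorem~\ref{thm:exact-grs-count} gives
\[
 |\mathcal G_{n,r}(q)|\le (q-1)+\tfrac{q-1}{2}\bigl((q-1)+(q+1)\bigr)=O(q^2),
\]
since \(E_M(n)\le M\). Because \(r\ge3\), this is \(O(q^{r-1})\). Combining the two bounds yields \(|\mathcal N_{n,r}(q)|=q^r+O_{n,r}(q^{r-1})\), and dividing by \(q^r\) shows the proportion tends to one.
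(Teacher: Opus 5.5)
Your proposal is correct and follows essentially the same route as the paper. It bounds the non-MDS locus by \(\delta_{n,r}q^{r-1}\) using Proposition~\ref{prop:mds-polynomial-nonzero} and Lemma~\ref{lem:finite-field-zero-bound}, and bounds \(|\mathcal G_{n,r}(q)|\le q^2\) by Theorem~\ref{thm:exact-grs-count}. Your explicit bound on \(\delta_{n,r}\) is a harmless extra, since \(\deg D_{n,r}\) is already a fixed integer depending only on \(n,r\).
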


\begin{proof}
Let \(\delta_{n,r}=\deg D_{n,r}\), with \(D_{n,r}\) as in
Theorem~\ref{thm:determinantal-mds}.  By
Proposition~\ref{prop:mds-polynomial-nonzero}, its reduction is nonzero in
every characteristic.  Applying Lemma~\ref{lem:finite-field-zero-bound}
to this reduction gives
\[
\#\{a\in\A^r(\F_q):D_{n,r}(a)=0\}
\leq
\delta_{n,r}q^{r-1}.
\]
By Theorem~\ref{thm:determinantal-mds}, at least
\[
q^r-\delta_{n,r}q^{r-1}
\]
monic degree-\(r\) polynomials define MDS codes.  On the other hand,
Theorem~\ref{thm:exact-grs-count} yields
\[
|\mathcal G_{n,r}(q)|
\leq
(q-1)+\frac{q-1}{2}\bigl((q-1)+(q+1)\bigr)
\leq q^2.
\]
Consequently,
\[
q^r-\delta_{n,r}q^{r-1}-q^2
\leq
\#\{\text{MDS non-GRS members}\}
\leq q^r.
\]
Since \(r\geq3\), the term \(q^2\) is \(O(q^{r-1})\), which proves the
assertion.
\end{proof}
 \section{Principal-ideal realization and selected examples}
\label{sec:principal-ideal-application}

The operator-theoretic construction recovers the original quotient-ring
model without using an ambient modulus in any proof above.

\begin{proposition}[Principal-ideal realization]
\label{prop:principal-ideal-realization}
Let \(g,v\in k[X]\) be monic, with
\(\deg g=r\), \(\deg v=n-r\), and put \(h=gv\).  Under the coefficient
identification \(k[X]_{<n}\simeq k^n\), the principal ideal
\[
 (g)\subseteq k[X]/(h)
\]
is precisely
\[
 \{u(X)g(X):\deg u<n-r\}
 =\mathcal C_g.
\]
Thus the ideal is a quotient-ring realization of the intrinsic companion
Krylov code; the code does not depend on the choice of \(v\).
\end{proposition}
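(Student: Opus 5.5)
The plan is to identify the image of the ideal \((g)\) in \(k[X]/(h)\) with a set of coefficient vectors, and then apply Proposition~\ref{prop:companion-realization}. Since \(h\) is monic of degree \(n\), every residue class in \(k[X]/(h)\) has a unique representative of degree \(<n\). This yields the coefficient identification \(k[X]/(h)\simeq k[X]_{<n}\simeq k^n\). Under it, the ideal \((g)\) is the set of reduced representatives of products \(f g\) with \(f\in k[X]\) arbitrary.

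First I show that every element of \((g)\) is represented by a polynomial of the form \(u g\) with \(\deg u<n-r\). Given \(f\in k[X]\), I divide \(f\) by \(v\), which is possible because \(v\) is monic, and write \(f=sv+u\) with \(\deg u<\deg v=n-r\). Then
\[
 fg=s\,vg+ug=s\,h+ug\equiv ug\pmod h.
\]
Since \(\deg(ug)<n\), the polynomial \(ug\) is already the reduced representative of the class. Hence the coefficient image of \((g)\) lies in \(\{ug:\deg u<n-r\}\).

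For the reverse inclusion, each such \(ug\) has degree \(<n\), so it is its own reduced representative, and its class lies in \((g)\). The two sets therefore coincide. Proposition~\ref{prop:companion-realization} identifies \(\{ug:\deg u<n-r\}\) with \(\mathcal K_n(T_g,\bar1)=\mathcal C_g\). This description involves only \(g\) and \(n\), so the ideal is independent of the choice of \(v\).

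I do not expect a serious obstacle here. The only point that needs care is the division step: the divisor must be \(v\), not \(h\), and this is exactly where monicity of \(v\) and the product factorization \(h=gv\) are used.
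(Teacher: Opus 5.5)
Your proof is correct and follows essentially the same route as the paper's proof. Both arguments reduce \(f\) modulo the monic cofactor \(v\) to obtain the unique degree-\(<n\) representative \(ug\) with \(\deg u<n-r\), note the reverse inclusion, and then invoke Proposition~\ref{prop:companion-realization} to identify the result with \(\mathcal C_g\).
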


\begin{proof}
Every class in \((g)\) has the form \(gf\bmod gv\).  Replacing \(f\) by
its remainder \(u\) modulo \(v\) gives the unique representative \(gu\)
of degree \(<n\), where \(\deg u<n-r\).  Conversely every such multiple
belongs to the ideal.  Proposition~\ref{prop:companion-realization}
identifies the same coefficient vectors with \(\mathcal C_g\).
\end{proof}

We present the following three concrete examples, corresponding respectively to: a nonsplit GRS orbit, a squarefree MDS non-GRS
orbit invisible to one low-degree obstruction, and a genuine repeated-root
MDS non-GRS orbit. 

\begin{example}[A nonsplit GRS orbit]
\label{ex:nonsplit-grs}
Work over
\[
 \mathbb F_{121}=\mathbb F_{11}(\omega),\qquad \omega^2=2,
\]
and take \(t=8+2\omega\), \(\gamma=7+5\omega\).  Then
\[
 \ord(t)=12,\qquad t^{11}=t^{-1},\qquad
 \gamma^{11}=\gamma t^3.
\]
For \(r=4\) and \(n=7\), the Frobenius pairs in
\(\{\gamma,\gamma t,\gamma t^2,\gamma t^3\}\) give
\[
 g(X)=(X^2+8X+10)(X^2+2X+10)
     =X^4+10X^3+3X^2+X+1.
\]
Both quadratics are irreducible over \(\mathbb F_{11}\).  The product of
all \(4\times4\) minors of \(H_g^{\mathrm{rem}}\) is \(1\), so
\(\mathcal C_g\) is an \([7,3,5]_{11}\) MDS code.  The displayed
Frobenius-reversed progression is family \textup{(N0)} of
Theorem~\ref{thm:arithmetic-classification}; hence the code is of GRS type.
\end{example}

\begin{example}[A squarefree MDS non-GRS orbit with vanishing
low-degree obstruction]
\label{ex:squarefree-nongrs}
Over \(\mathbb F_{11}\), let
\[
 \begin{aligned}
 g(X)
  &=(X^2+X+4)(X^2+3X+3)\\
  &=X^4+4X^3+10X^2+4X+1,
 \qquad n=7.
 \end{aligned}
\]
The two quadratic factors are irreducible, and
\[
 H_g^{\mathrm{rem}}=
 \begin{pmatrix}
 1&0&0&0&10&4&5\\
 0&1&0&0& 7&4&2\\
 0&0&1&0& 1&3&10\\
 0&0&0&1& 7&6&1
 \end{pmatrix}.
\]
The product of its maximal minors is \(5\ne0\), proving that
\(\mathcal C_g\) is MDS.  In family \textup{(N0)}, the two quadratic
Frobenius pairs have the same norm; here their constant terms are \(4\)
and \(3\), so Theorem~\ref{thm:arithmetic-classification} excludes GRS
type.  Nevertheless, for
\[
 \Omega_4(g):=a_0a_2a_3^2-a_1^2a_2
\]
one has \(\Omega_4(g)=0\).  This illustrates geometrically why a single
coefficient equation cannot recognize the codimension-two surface
\(\mathfrak G_4\).
\end{example}

\begin{example}[A genuine repeated-root MDS non-GRS orbit]
\label{ex:repeated-nongrs}
Take
\[
 g(X)=(X-1)^2(X-3)=X^3+6X^2+7X+8
 \quad\text{over }\mathbb F_{11},\qquad n=6.
\]
The Hasse-jet parity-check matrix is
\[
 H_g^{\mathrm{jet}}=
 \begin{pmatrix}
 1&1&1&1&1&1\\
 0&1&2&3&4&5\\
 1&3&9&5&4&1
 \end{pmatrix}.
\]
The product of all \(3\times3\) minors is \(6\ne0\), so the code is
\([6,3,4]_{11}\) MDS.  Its characteristic polynomial is not squarefree
and is not a pure power.  The unipotent alternative in
Theorem~\ref{thm:arithmetic-classification} therefore shows that it is
non-GRS.
\end{example}

The examples emphasize the separation of roles in the present framework:
\(D_{n,r}\) decides the MDS question, while orbit rigidity and the
arithmetic classification decide the GRS question.  The ambient ideal is
only a convenient realization of the resulting coefficient code.
 \section{Concluding remarks and open problems}
\label{sec:conclusion}

The central object of this paper is not the ambient quotient ring but the
cyclic pair \((A,z)\) and its projective Krylov orbit. In the stable range,
the requirement that an MDS orbit segment lie on a rational normal curve
(RNC) is rigid: it forces the acting projectivity to come from the
symmetric-power representation of \(\mathrm{PGL}_2\). For companion
operators, the three projective conjugacy types become the four arithmetic
GRS families over \(\mathbb F_q\).

The coefficient-space picture makes the rigidity visible algebraically.
The semisimple GRS closure is a two-dimensional rational surface,
independently of \(r\), inside the \(r\)-dimensional space of monic
degree-\(r\) polynomials. Its codimension \(r-2\) explains why a single
low-degree obstruction is complete in the cubic case but cannot be complete
in higher degree.  In every characteristic, the normalization of its
nonzero-constant-term part is the affine quotient of the parameter torus by
reversal. Frobenius descent on the generic two-to-one parameterization gives
an exact finite-field count, while the determinantal description of the MDS
open set shows that MDS non-GRS members are asymptotically generic.

The normalization theorem shifts the remaining geometric questions from
the existence of a torus quotient to its local and embedded geometry.  A
first problem is to determine the conductor and the non-isomorphism locus of
the normalization morphism
\[
\mathbb G_{m,k}^{2}/\!/\langle\iota\rangle
\longrightarrow
\mathfrak G_{r,k}\cap D(a_0)
\]
and to relate them to the collision strata and the determinantal non-MDS
boundary.  In characteristic two, the quotient morphism is finite
\(\acute{e}\)tale off the fixed curve \(t=1\), while the fixed scheme is
nonreduced when \(r\) is odd; determining the different and the resulting
local structure along its image remains open.  Further problems are to
determine the defining ideal of \(\mathfrak G_4\), and more generally the
degrees and singular loci of \(\mathfrak G_r\).  On the coding side, one can
ask for monomial-equivalence classes rather than generator polynomials.
 
\appendix
\section{Boundary dimensions}
\label{app:boundary-cases}

The rigidity theorem concerns \(r\ge3\) and \(n\ge r+3\).  For
completeness, this appendix records the complementary dimensions.  Write
\(\mathcal C_g=\mathcal K_n(T_g,\bar1)\).
For \(r=1\), where the RNC formulation degenerates, ``of GRS type'' is
understood in the conventional (possibly extended) evaluation-code sense.

\begin{theorem}[Complementary parameter ranges]
\label{thm:boundary-ranges}
Let \(1\le r<n\), and let
\[
 g(X)=X^r+a_{r-1}X^{r-1}+\cdots+a_1X+a_0\in\mathbb F_q[X]
\]
be monic.
\begin{enumerate}[label=\textup{(\roman*)}]
\item If \(r=1\), write \(g(X)=X-\beta\).  Then \(\mathcal C_g\) is
      MDS exactly when \(\beta\ne0\).  In that case it is of GRS type
      exactly when \(n\le q+1\).
\item Suppose \(r=2\), and put \(p=\operatorname{char}\mathbb F_q\).
      If \(g\) has distinct geometric roots \(\alpha,\beta\), then
      \(\mathcal C_g\) is MDS exactly when
      \[
      \alpha\beta\ne0,\qquad \ord(\beta/\alpha)\ge n.
      \]
      If \(g=(X-\beta)^2\), then it is MDS exactly when
      \(\beta\ne0\) and \(n\le p\).  Every MDS code in either case is of
      GRS type.
\item If \(r\ge3\) and \(n=r+1\), then \(\mathcal C_g\) is MDS exactly
      when \(a_0,\ldots,a_{r-1}\) are all nonzero.  It is then of GRS type
      exactly when \(n\le q+1\).
\item If \(r\ge3\) and \(n=r+2\), put \(a_r=1\).  Then \(\mathcal C_g\)
      is MDS exactly when all \(a_0,\ldots,a_r\) are nonzero and
      \[
      \frac{a_1}{a_0},\frac{a_2}{a_1},\ldots,
      \frac{a_r}{a_{r-1}}
      \]
      are pairwise distinct.  Every such MDS code is of GRS type.
\end{enumerate}
\end{theorem}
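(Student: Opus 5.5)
The proof goes case by case, using the sparse-multiple criterion of Theorem~\ref{thm:krylov-mds} (equivalently, the root-jet criterion of Theorem~\ref{thm:root-jet-mds}) for the MDS part. For the GRS part we use two facts. First, every $n$-arc in $\PP^1$ lies on $\PP^1$, which is the RNC when $r=2$. Second, in dimension $r-1$, any $r+1$ points in general position lie on an RNC, and so do any $r+2$ such points.

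\textbf{Case (i), $r=1$.} The code is $\{u(X)(X-\beta)\}$. Its parity-check row is $(1,\beta,\ldots,\beta^{n-1})$, and a weight-one multiple exists exactly when $\beta=0$. For $\beta\neq0$ the dual is the repetition-type code spanned by a single row with all entries nonzero. Then $\mathcal C_g$ is an $[n,n-1,2]$ code, monomially equivalent to the parity code. Under the conventional extended evaluation-code sense stated in the appendix, this is GRS exactly when $n\le q+1$, because a GRS code of length $n$ needs $n$ distinct points of $\PP^1(\F_q)$. The proof has to use the convention literally: an $[n,n-1]$ code of length $n>q+1$ is not GRS.

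\textbf{Case (ii), $r=2$.} In the squarefree case the jet matrix has rows $(\alpha^i)$ and $(\beta^i)$. Its $2\times2$ minors are $\alpha^{i}\beta^{i}(\ldots)$, which factor as $\alpha^{i+j}\bigl((\beta/\alpha)^{j-i}-1\bigr)$ up to sign. Together with $a_0\neq0$, this gives the order criterion. The repeated case is Lemma~\ref{lem:pure-power-mds} with $r=2$; that proof only used $r\ge2$, and the $n\ge r+2$ assumption can be removed by checking $n=3$ directly. For the GRS part, an MDS code has pairwise nonproportional columns in $\PP^1$, and $\PP^1$ is itself the degree-one RNC. Then Proposition~\ref{prop:projective-grs-criterion}(ii) applies with $r=2$.

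\textbf{Case (iii), $n=r+1$.} The code is one-dimensional, spanned by the coefficient vector of $g$. It is MDS, i.e.\ of weight $r+1$, exactly when all $a_i\neq0$. Dually, the parity-check columns are $r+1$ points in general position in $\PP^{r-1}$, which projectively form a frame. A frame lies on the RNC image of any $r+1$ distinct points of $\PP^1(\F_q)$, by normalizing a frame of Veronese points. Hence the code is GRS iff $q+1\ge n$. For the converse, a GRS code needs $n$ distinct evaluation points.

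\textbf{Case (iv), $n=r+2$.} The code is two-dimensional, with rows $g$ and $Xg$, i.e.\ $(a_0,\ldots,a_r,0)$ and $(0,a_0,\ldots,a_r)$. MDS means every nonzero codeword has weight at least $r+1$, so each combination $\lambda g+\mu Xg$ vanishes in at most one coordinate. The coordinate entries are $\lambda a_i+\mu a_{i-1}$, with the end coordinates giving $a_0\neq0$ and $a_r\neq0$. Two coordinates $i\neq j$ vanish simultaneously for nonzero $(\lambda,\mu)$ iff $a_{i}/a_{i-1}=a_j/a_{j-1}$, or some $a_i=0$. This is the stated criterion.

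For GRS, a two-dimensional MDS code $\mathcal C_g$ is given by $n$ pairwise nonproportional columns in $\PP^1$ (its generator matrix). Such a code is always GRS: the columns lie on $\PP^1$, so Proposition~\ref{prop:projective-grs-criterion} with roles dualized, or equivalently duality \eqref{eq:extended-grs-duality}, shows that its dual, and hence the code itself, is GRS. This case is the subtle one, because the proposition is phrased via parity checks with $r\ge2$. To justify it, apply that proposition to $\mathcal C_g^\perp$, whose parity-check matrix is the $2\times n$ generator matrix of $\mathcal C_g$.

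The main obstacle is (i): whether ``GRS type'' for $r=1$ imposes the $n\le q+1$ bound. I would treat it strictly through the evaluation-point definition.
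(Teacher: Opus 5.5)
Your proposal is correct and follows essentially the same case-by-case route as the paper: the sparse-multiple and minor criteria for the MDS part, column proportionality of the $2\times n$ generator matrix in (iv), and reduction of each GRS claim to points of \(\PP^1(\F_q)\) (in (iii) via a frame, which is equivalent to the paper's appeal to case (i) plus duality). One harmless slip: the minor in (ii) factors as \(\alpha^{j}\beta^{i}\bigl((\beta/\alpha)^{j-i}-1\bigr)\), not with prefactor \(\alpha^{i+j}\), but only its nonvanishing for \(\alpha\beta\ne0\) is used, so the conclusion is unaffected.
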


\begin{proof}
For \(r=1\), reduction modulo \(X-\beta\) gives the parity-check row
\[
 (1,\beta,\beta^2,\ldots,\beta^{n-1}).
\]
It has full support exactly when \(\beta\ne0\).  A full-support
one-dimensional code is an extended \(\operatorname{GRS}_1\) code precisely
when \(n\) distinct points can be chosen on
\(\mathbb P^1(\mathbb F_q)\), that is, when \(n\le q+1\); duality gives the
claim for \(\mathcal C_g\).

For \(r=2\) with distinct roots, a parity-check matrix over the splitting
field has columns \((\alpha^i,\beta^i)^{\mathsf T}\).  The determinant in
columns \(i<j\) is
\[
 \alpha^i\beta^i(\beta^{j-i}-\alpha^{j-i}),
\]
and all such determinants are nonzero precisely under the stated order
condition.  For \(g=(X-\beta)^2\), Hasse jets, followed by nonzero row and
column scalings, give
\[
 \begin{pmatrix}
 1&1&\cdots&1\\
 0&1&\cdots&n-1
 \end{pmatrix}.
\]
Its columns are pairwise independent exactly when \(\beta\ne0\) and the
images of \(0,\ldots,n-1\) in the prime field are distinct, equivalently
\(n\le p\).  Every two-dimensional MDS parity-check configuration is a
set of distinct points of \(\mathbb P^1(\mathbb F_q)\), hence is of GRS
type.

If \(n=r+1\), the code is generated by
\[
 (a_0,a_1,\ldots,a_{r-1},1),
\]
which is MDS exactly when it has full support.  The first case and duality
give the GRS statement.  If \(n=r+2\), a generator matrix is
\[
 \begin{pmatrix}
 a_0&a_1&\cdots&a_r&0\\
 0&a_0&\cdots&a_{r-1}&a_r
 \end{pmatrix}.
\]
It is MDS precisely when every column is nonzero and no two columns are
proportional.  The first requirement says \(a_i\ne0\) for every \(i\);
the affine slopes of the middle columns are the displayed ratios, giving
the second requirement.  These columns form a subset of
\(\mathbb P^1(\mathbb F_q)\), so the MDS code is of GRS type.
\end{proof}
 \section{The stabilizer of a rational normal curve}
\label{app:stabilizer}

Let \(k\) be a field, let \(d\ge2\), and write
\[
 \boldsymbol\nu_d(s,t)
  =(s^d,s^{d-1}t,\ldots,st^{d-1},t^d)^{\mathsf T},
 \qquad
 \Gamma_0=\nu_d(\PP^1_k)\subseteq\PP^d_k.
\]
Our convention for \(\Sym^d(B)\) is determined by
\[
 \boldsymbol\nu_d\!\left(B(s,t)^{\mathsf T}\right)
   =\Sym^d(B)\boldsymbol\nu_d(s,t).
\]
Every stabilizer below is the stabilizer of the geometric curve, not
merely of its finite set of \(k\)-rational points.

\begin{proposition}[The projective stabilizer of an RNC]
\label{prop:rnc-stabilizer}
The symmetric-power representation induces an injective homomorphism
\[
 \overline{\Sym}^d:\operatorname{PGL}_2(k)
   \longrightarrow\operatorname{PGL}_{d+1}(k),
\]
and
\[
 \operatorname{Stab}_{\operatorname{PGL}_{d+1}(k)}(\Gamma_0)
   =\overline{\Sym}^d\bigl(\operatorname{PGL}_2(k)\bigr).
\]
The same statement holds after every extension of the ground field.
\end{proposition}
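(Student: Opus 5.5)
The proof has three parts: well-definedness and injectivity of \(\overline{\Sym}^d\), the inclusion of its image in the stabilizer, and the reverse inclusion. The first two are formal. The identity \(\boldsymbol\nu_d(B(s,t)^{\mathsf T})=\Sym^d(B)\boldsymbol\nu_d(s,t)\) shows that \(\Sym^d\) is multiplicative, because \(\boldsymbol\nu_d\) evaluated at all \((s,t)\in\bar k^2\) spans \(\bar k^{d+1}\); the same identity shows that \(\Sym^d(B)\) maps \(\Gamma_0\) into itself. Since \(\Sym^d(aB)=a^d\Sym^d(B)\), scalars go to scalars, so the projective map is well defined.

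For injectivity, suppose \(\Sym^d(B)=c I\). Then \(\nu_d(BP)=\nu_d(P)\) for every geometric point \(P\), and \(\nu_d\) is injective, so \(B\) fixes every point of \(\PP^1(\bar k)\). A matrix fixing \([1:0]\), \([0:1]\) and \([1:1]\) is scalar. The same argument works over any extension field, which gives the final sentence of the statement.

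For the reverse inclusion, let \(M\in\operatorname{GL}_{d+1}(k)\) with \(M\Gamma_0=\Gamma_0\). I proceed in three steps.

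\begin{enumerate}[label=\textup{(\arabic*)}]
\item Since \(\nu_d\colon\PP^1\to\Gamma_0\) is an isomorphism defined over \(k\), the restriction of \(M\) transports to a \(k\)-automorphism \(\phi=\nu_d^{-1}\circ M\circ\nu_d\) of \(\PP^1_k\). Here I use \(\operatorname{Aut}(\PP^1_k)=\operatorname{PGL}_2(k)\), which holds because automorphisms preserve \(\mathcal O(1)\), the unique generator of the Picard group up to isomorphism. This gives \(\phi=[B]\) with \(B\in\operatorname{GL}_2(k)\).
\item Put \(N=\Sym^d(B)^{-1}M\). Then \(N\) fixes \(\Gamma_0\) pointwise.
\item Since \(N\) fixes the \(d+2\) points \(\nu_d(P_0),\dots,\nu_d(P_{d+1})\) for distinct \(P_i\in\PP^1(\bar k)\), and any \(d+1\) of these points are independent by the Vandermonde determinant, \(N\) fixes a projective frame. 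An element of \(\operatorname{PGL}_{d+1}\) fixing a frame is scalar, so \(N\) is scalar.
\end{enumerate}

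To avoid needing \(\bar k\)-points of \(\PP^1\) in step (3), I would run the argument over \(\bar k\). This is harmless because \(N\) is already defined over \(k\) and the conclusion "scalar" descends.

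I expect step (1) to be the main obstacle, because it must be done over an arbitrary field \(k\), possibly finite, without assuming that enough rational points exist. I would first try the scheme-theoretic route just described: \(M\) induces an automorphism of the \(k\)-scheme \(\Gamma_0\simeq\PP^1_k\), and \(\operatorname{Aut}_k(\PP^1_k)=\operatorname{PGL}_2(k)\) holds in every characteristic.

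As a fallback there is a linear-algebraic route. Pull back \(\mathcal O(1)\) and observe that \(M\) acts on \(H^0(\Gamma_0,\mathcal O(1))\cong H^0(\PP^1,\mathcal O(d))\) compatibly with \(\phi^*\). Because \(\phi^*\mathcal O(1)\cong\mathcal O(1)\), the map \(\phi\) lifts to a linear map \(B\) on \(H^0(\mathcal O(1))=k^2\), and then \(\Sym^d(B)\) and \(M\) agree up to a scalar by step (3).
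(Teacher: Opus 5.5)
Your proposal is correct and takes essentially the same route as the paper: transport the stabilizing projectivity through \(\nu_d\) to \(\operatorname{Aut}_k(\PP^1)=\operatorname{PGL}_2(k)\), divide by \(\Sym^d(B)\), and show that the resulting pointwise-fixing element is trivial using a Vandermonde frame of \(d+2\) points chosen over \(\overline{k}\). Your injectivity argument, which uses the injectivity of \(\nu_d\) and the three-point frame \([1:0],[0:1],[1:1]\) on \(\PP^1\), spells out what the paper's one-line remark leaves implicit.
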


\begin{proof}
Equivariance of \(\nu_d\) gives the inclusion from right to left.
Scalar matrices in \(\operatorname{GL}_2(k)\) act through scalar
matrices on \(\Sym^d(k^2)\), so the action descends to
\(\operatorname{PGL}_2(k)\).

Conversely, let
\[
 [M]\in\operatorname{Stab}_{\operatorname{PGL}_{d+1}(k)}(\Gamma_0).
\]
Since \(\nu_d:\PP^1_k\to\Gamma_0\) is a \(k\)-isomorphism, restriction
of \([M]\) induces
\[
 \beta=\nu_d^{-1}\circ[M]\circ\nu_d
     \in\operatorname{Aut}_k(\PP^1)
     =\operatorname{PGL}_2(k).
\]
Choose \(B\in\operatorname{GL}_2(k)\) representing \(\beta\).  Then
\([M]\) and \([\Sym^d(B)]\) induce the same automorphism of
\(\Gamma_0\), so
\[
 [N]=[M][\Sym^d(B)]^{-1}
\]
fixes every geometric point of \(\Gamma_0\).

It remains to see that \([N]\) is the identity without imposing a
lower bound on \(\#k\).  Choose \(d+2\) pairwise distinct elements
\[
 u_0,\ldots,u_{d+1}\in\overline{k}
\]
and put \(P_i=\nu_d([1:u_i])\).  Any \(d+1\) of these points are
linearly independent, because the corresponding coordinate
determinant is the nonzero Vandermonde product
\[
 \prod_{a<b}(u_b-u_a).
\]
Thus \(P_0,\ldots,P_{d+1}\) form a projective frame in
\(\PP^d_{\overline{k}}\).  A projectivity fixing a projective frame
pointwise is the identity: after the first \(d+1\) frame points are
used as coordinate points, a representative matrix is diagonal, and
the last frame point, whose coordinates are all nonzero, forces all
diagonal entries to agree.  Hence \([N]=1\), and
\([M]=[\Sym^d(B)]\).

This argument also proves injectivity, since an element in the kernel
acts trivially on \(\nu_d(\PP^1)\). The frame is chosen over
\(\overline{k}\), so no lower bound on \(\#k\) is required.
\end{proof}

\begin{lemma}[Spectrum of a symmetric-power projectivity]
\label{lem:symmetric-power-spectrum}
Let \(k=\F_q\), let \(B\in\operatorname{GL}_2(\F_q)\), let
\(c\in\F_q^\times\), and put
\[
 A=c\,\Sym^d(B).
\]
If \(\lambda,\mu\in\F_{q^2}^\times\) are the eigenvalues of \(B\),
counted with algebraic multiplicity, then the eigenvalues of \(A\) are
\[
 c\lambda^d,\ c\lambda^{d-1}\mu,\ \ldots,\
 c\lambda\mu^{d-1},\ c\mu^d.
\]
In particular, all eigenvalues of \(A\) lie in \(\F_{q^2}\), and the
three nontrivial projective conjugacy types have the following
behavior.
\begin{enumerate}[label=\textup{(\roman*)}]
\item In the split semisimple case,
      \(\lambda,\mu\in\F_q^\times\) are distinct and \(A\) is
      diagonalizable over \(\F_q\).
\item In the nonsplit semisimple case,
      \(\mu=\lambda^q\) with
      \(\lambda\in\F_{q^2}\setminus\F_q\), and \(A\) is
      diagonalizable over \(\F_{q^2}\).  No diagonalization over
      \(\F_q\) is asserted.
\item In the unipotent case, \(B\) has a single eigenvalue
      \(\lambda\in\F_q^\times\), and every eigenvalue of \(A\) equals
      \(c\lambda^d\).  No semisimplicity assertion is made.
\end{enumerate}
Consequently, if \(A\) has at least two distinct eigenvalues, then it
is diagonalizable over \(\F_{q^2}\), and its minimal polynomial over
\(\overline{\F}_q\) is squarefree.
\end{lemma}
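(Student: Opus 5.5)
The plan is to compute the spectrum of \(\Sym^d(B)\) from a triangular form of \(B\), and then to read off the three cases and the final diagonalizability claim.

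\textbf{Step 1: reduction to triangular form.} Over \(\F_{q^2}\) the characteristic polynomial of \(B\) splits, since it is a quadratic over \(\F_q\). So there is \(P\in\operatorname{GL}_2(\F_{q^2})\) with \(PBP^{-1}=\begin{pmatrix}\lambda&x\\0&\mu\end{pmatrix}\). The defining identity \(\boldsymbol\nu_d(B(s,t)^{\mathsf T})=\Sym^d(B)\boldsymbol\nu_d(s,t)\) determines \(\Sym^d(B)\) uniquely: the monomials \(s^{d-i}t^i\) are linearly independent polynomials, so two matrices agreeing on all \(\boldsymbol\nu_d(s,t)\) coincide. Hence \(\Sym^d\) is multiplicative, and
\[
\Sym^d(PBP^{-1})=\Sym^d(P)\Sym^d(B)\Sym^d(P)^{-1},
\]
so \(\Sym^d(B)\) and \(\Sym^d(PBP^{-1})\) have the same spectrum.

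\textbf{Step 2: the spectrum in the triangular case.} Take \(B\) upper triangular as above. Then \(B(s,t)^{\mathsf T}=(\lambda s+xt,\mu t)^{\mathsf T}\), and
\[
(\lambda s+xt)^{d-i}(\mu t)^i=\lambda^{d-i}\mu^i s^{d-i}t^i+(\text{terms in }s^{d-j}t^j,\ j>i).
\]
So \(\Sym^d(B)\) is triangular in the monomial basis, with diagonal entries \(\lambda^{d-i}\mu^i\). Multiplying by \(c\) gives the stated eigenvalues of \(A\), all of which lie in \(\F_{q^2}\).

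\textbf{Step 3: the three cases.}
\begin{enumerate}[label=\textup{(\roman*)}]
\item If \(\lambda\ne\mu\), take \(x=0\). This is possible over \(\F_q\) in the split case and over \(\F_{q^2}\) in the nonsplit case. Then \(\Sym^d(PBP^{-1})\) is diagonal, because \((\lambda s)^{d-i}(\mu t)^i\) is a single monomial, so \(A\) is diagonalizable over the corresponding field.
\item In the nonsplit case the characteristic polynomial of \(B\) is irreducible over \(\F_q\). Its roots are therefore Frobenius-conjugate, \(\mu=\lambda^q\) with \(\lambda\notin\F_q\). The same diagonal argument as in (i) applies over \(\F_{q^2}\).
\item If \(\lambda=\mu\), then \(\lambda\) is the unique root of an \(\F_q\)-polynomial, so it is fixed by Frobenius and lies in \(\F_q^\times\) (nonzero because \(B\) is invertible). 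All eigenvalues of \(A\) equal \(c\lambda^d\).
\end{enumerate}

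\textbf{Step 4: the final claim.} If \(A\) has at least two distinct eigenvalues, then \(\lambda\ne\mu\); otherwise all eigenvalues of \(A\) would coincide, by (iii). So we are in case (i) or (ii), and \(A\) is diagonalizable over \(\F_{q^2}\). A diagonalizable operator has a squarefree minimal polynomial over \(\overline{\F}_q\).

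The only point needing care is the well-definedness and multiplicativity of \(\Sym^d\) used in Step 1. It follows from the linear independence of the monomials in \(s,t\), which is valid over any field, including finite ones, because we work with polynomial identities rather than point evaluations. No step is a genuine obstacle.
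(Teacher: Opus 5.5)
Your proposal is correct and follows essentially the same route as the paper: triangularize \(B\) over \(\F_{q^2}\), read off the diagonal of \(\Sym^d(B)\) in the monomial basis, and, when \(\lambda\ne\mu\), diagonalize \(A\) using an eigenbasis of \(B\). Your added check that \(\Sym^d\) is well defined and multiplicative supplies a step the paper leaves implicit, namely that conjugating \(B\) by \(P\) corresponds to conjugating \(\Sym^d(B)\) by \(\Sym^d(P)\).
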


\begin{proof}
The characteristic polynomial of \(B\) splits over \(\F_{q^2}\).
After a change of basis over that field, \(B\) is upper triangular:
\[
 B\sim
 \begin{pmatrix}
  \lambda&\eta\\
  0&\mu
 \end{pmatrix}.
\]
On the ordered monomial basis
\[
 s^d,s^{d-1}t,\ldots,st^{d-1},t^d,
\]
its \(d\)-th symmetric power is triangular with diagonal entries
\[
 \lambda^d,\lambda^{d-1}\mu,\ldots,
 \lambda\mu^{d-1},\mu^d.
\]
Multiplication by \(c\) gives the displayed spectrum.

If \(\lambda\ne\mu\), then \(B\) is diagonalizable over its splitting
field, and applying \(\Sym^d\) to an eigenbasis diagonalizes \(A\)
there as well.  The splitting field is \(\F_q\) in the split case and
is \(\F_{q^2}\) in the nonsplit case.  If
\(\lambda=\mu\), every displayed eigenvalue is \(c\lambda^d\); this
includes the nontrivial unipotent case (and the projective identity).
Finally, two distinct eigenvalues of \(A\) force
\(\lambda\ne\mu\), so the preceding diagonalization applies.  The
argument uses only triangularization and therefore is valid in every
characteristic, including when \(\operatorname{char}\F_q\le d\).
\end{proof}

\section*{Funding}
This work was supported by the National Natural Science Foundation of China
[grant numbers 12571003 and 12501006] and the Basic and Applied Basic
Research Foundation of Guangdong Province [grant number 2024A1515010589].

\bibliographystyle{amsplain}

\begin{thebibliography}{10}

\bibitem{AbdukhalikovDingVerma2026}
Kanat Abdukhalikov, Cunsheng Ding, and Gyanendra~K. Verma, \emph{Some
  constructions of non-generalized {Reed--Solomon} {MDS} codes}, Discrete
  Mathematics \textbf{349} (2026), no.~10, 115202,
  \url{https://doi.org/10.1016/j.disc.2026.115202}.

\bibitem{Ball-Grassl-Rotteler}
Simeon Ball, \emph{{Grassl--R{\"o}tteler} cyclic and consta-cyclic {MDS} codes
  are generalised {Reed--Solomon} codes}, Designs, Codes and Cryptography
  \textbf{91} (2023), no.~5, 1685--1694,
  \url{https://doi.org/10.1007/s10623-022-01174-5}.

\bibitem{Ball-Lavrauw}
Simeon Ball and Michel Lavrauw, \emph{Arcs in finite projective spaces}, EMS
  Surveys in Mathematical Sciences \textbf{6} (2019), no.~1--2, 133--172,
  \url{https://doi.org/10.4171/EMSS/33}.

\bibitem{Beelen-Glynn-Hoholdt-Kaipa}
Peter Beelen, David~G. Glynn, Tom H{\o}holdt, and Krishna~V. Kaipa,
  \emph{Counting generalized {Reed--Solomon} codes}, Advances in Mathematics of
  Communications \textbf{11} (2017), no.~4, 777--790,
  \url{https://doi.org/10.3934/amc.2017057}.

\bibitem{Beelen-Puchinger-Rosenkilde}
Peter Beelen, Sven Puchinger, and Johan Rosenkilde, \emph{Twisted
  {Reed--Solomon} codes}, IEEE Transactions on Information Theory \textbf{68}
  (2022), no.~5, 3047--3061, \url{https://doi.org/10.1109/TIT.2022.3146254}.

\bibitem{CaminataEtAl2018}
Alessio Caminata, Noah Giansiracusa, Han-Bom Moon, and Luca Schaffler,
  \emph{Equations for point configurations to lie on a rational normal curve},
  Advances in Mathematics \textbf{340} (2018), 653--683,
  \url{https://doi.org/10.1016/j.aim.2018.10.013}.

\bibitem{ChenHuangWu2025}
Bocong Chen, Jing Huang, and Hao Wu, \emph{Regular cyclic $(q+1)$-arcs in
  $\mathrm{PG}(3,2^m)$: spectral rigidity, descent, and an {MDS} criterion},
  arXiv preprint arXiv:2512.19371, December 2025, Version 1, 22 December 2025;
  \url{https://arxiv.org/abs/2512.19371}.

\bibitem{Flowe-Harris}
Randolph~P. Flowe and Gary~A. Harris, \emph{A note on generalized {Vandermonde}
  determinants}, SIAM Journal on Matrix Analysis and Applications \textbf{14}
  (1993), no.~4, 1146--1151, \url{https://doi.org/10.1137/0614079}.

\bibitem{Harris1992}
Joe Harris, \emph{Algebraic geometry: A first course}, Graduate Texts in
  Mathematics, vol. 133, Springer-Verlag, New York, 1992,
  \url{https://doi.org/10.1007/978-1-4757-2189-8}.

\bibitem{Hirschfeld-Thas}
J.~W.~P. Hirschfeld and J.~A. Thas, \emph{General {Galois} geometries}, second
  ed., Springer Monographs in Mathematics, Springer-Verlag, London, 2016,
  \url{https://doi.org/10.1007/978-1-4471-6790-7}.

\bibitem{Jin-Ma-Xing-Zhou}
Lingfei Jin, Liming Ma, Chaoping Xing, and Haiyan Zhou, \emph{New families of
  non-{Reed--Solomon} {MDS} codes}, IEEE Transactions on Information Theory
  \textbf{72} (2026), no.~2, 985--993,
  \url{https://doi.org/10.1109/TIT.2025.3647222}.

\bibitem{Joyner-Ksir-Traves}
David Joyner, Amy Ksir, and Will Traves, \emph{Automorphism groups of
  generalized {Reed--Solomon} codes}, Advances in Coding Theory and
  Cryptography (Tanush Shaska, W.~Cary Huffman, David Joyner, and Vladimir
  Ustimenko, eds.), Series on Coding Theory and Cryptology, vol.~3, World
  Scientific, Hackensack, NJ, 2007,
  \url{https://doi.org/10.1142/9789812772022_0008}, pp.~114--125.

\bibitem{Li-Lin}
Chi-Kwong Li and Jephian C.-H. Lin, \emph{Confluent {Vandermonde} matrix and
  related topics}, Elemente der Mathematik (2026), Published online first,
  Published 18 February 2026; \url{https://doi.org/10.4171/EM/570}.

\bibitem{Liu-Liu-constacyclic}
Hongwei Liu and Shengwei Liu, \emph{A class of constacyclic codes are
  generalized {Reed--Solomon} codes}, Designs, Codes and Cryptography
  \textbf{91} (2023), no.~12, 4143--4151,
  \url{https://doi.org/10.1007/s10623-023-01294-6}.

\bibitem{LiuLiuOggier2026}
Shengwei Liu, Hongwei Liu, and Fr{\'e}d{\'e}rique~E. Oggier,
  \emph{Constructions of non-generalized {Reed--Solomon} {MDS} codes}, IEEE
  Transactions on Information Theory \textbf{72} (2026), no.~4, 2112--2122,
  \url{https://doi.org/10.1109/TIT.2026.3664947}.

\bibitem{MacWilliams-Sloane}
F.~Jessie MacWilliams and N.~J.~A. Sloane, \emph{The theory of error-correcting
  codes}, North-Holland Mathematical Library, vol.~16, North-Holland Publishing
  Company, Amsterdam--New York--Oxford, 1977, Parts I--II; ISBN
  978-0-444-85009-6 and 978-0-444-85010-2.

\bibitem{Maruta1997}
Tatsuya Maruta, \emph{Cyclic arcs and pseudo-cyclic {MDS} codes}, Discrete
  Mathematics \textbf{174} (1997), no.~1--3, 199--205,
  \url{https://doi.org/10.1016/S0012-365X(96)00334-2}.

\bibitem{Reed-Solomon}
Irving~S. Reed and Gustave Solomon, \emph{Polynomial codes over certain finite
  fields}, Journal of the Society for Industrial and Applied Mathematics
  \textbf{8} (1960), no.~2, 300--304, \url{https://doi.org/10.1137/0108018}.

\bibitem{Shi-Li-Sepasdar-Sole}
Minjia Shi, Xiaoxiao Li, Zahra Sepasdar, and Patrick Sol{\'e}, \emph{Polycyclic
  codes as invariant subspaces}, Finite Fields and Their Applications
  \textbf{68} (2020), 101760, \url{https://doi.org/10.1016/j.ffa.2020.101760}.

\bibitem{Shokrollahi2000}
M.~Amin Shokrollahi, \emph{On cyclic {MDS}-codes}, Coding Theory and
  Cryptography: From Enigma and Geheimschreiber to Quantum Theory (Berlin,
  Heidelberg) (David Joyner, ed.), Springer-Verlag, 2000,
  \url{https://doi.org/10.1007/978-3-642-59663-6_11}, pp.~202--213.

\bibitem{StacksProject}
{The Stacks Project Authors}, \emph{The {Stacks Project}},
  \url{https://stacks.math.columbia.edu}, 2026, Tags \texttt{0BRI} and
  \texttt{07S7}; accessed 14 July 2026.

\bibitem{WangLiuLuo2026}
Guodong Wang, Hongwei Liu, and Jinquan Luo, \emph{New constructions of
  non-{GRS} {MDS} codes, recovery and determination algorithms for {GRS}
  codes}, IEEE Transactions on Information Theory \textbf{72} (2026), no.~6,
  3848--3861, \url{https://doi.org/10.1109/TIT.2026.3683741}.

\end{thebibliography}
\providecommand{\bysame}{\leavevmode\hbox to3em{\hrulefill}\thinspace}
\providecommand{\MR}{\relax\ifhmode\unskip\space\fi MR }
\providecommand{\MRhref}[2]{%
  \href{http://www.ams.org/mathscinet-getitem?mr=#1}{#2}
}
\providecommand{\href}[2]{#2}

\end{document}